\documentclass[11pt]{article}%

\usepackage{comment}

\usepackage[babel,german=quotes]{csquotes}

\usepackage[utf8]{inputenc}

\usepackage{color}

\usepackage{nicefrac}

\usepackage{amssymb}

\usepackage{amsmath}

\usepackage{amsmath}

\usepackage{amsthm}

\usepackage{xcolor}

\usepackage{amsfonts}

\usepackage{graphicx}

\usepackage{enumerate}

\usepackage{dsfont}

\usepackage{hyperref}

\usepackage{enumitem}

\hypersetup{pdfstartview={FitH},
colorlinks=true,
linkcolor=black,
urlcolor=black,
citecolor=black,
bookmarksopen,
bookmarksopenlevel={1},
bookmarksnumbered}

\usepackage[colorinlistoftodos,textsize=tiny]{todonotes}
\newcommand{\Comments}{1}
\newcommand{\mynote}[2]{\ifnum\Comments=1\textcolor{#1}{#2}\fi}
\newcommand{\mytodo}[2]{\ifnum\Comments=1%
	\todo[linecolor=#1!80!black,backgroundcolor=#1,bordercolor=#1!80!black]{#2}\fi}

\newcommand{\hajo}[1]{\mytodo{green!20!white}{HH: #1}}

\ifnum\Comments=1               % fix margins for todonotes
\fi

\providecommand{\U}[1]{\protect\rule{.1in}{.1in}}

\numberwithin{equation}{section}

\newtheorem{theorem}{Theorem}

\newtheorem{corollary}[theorem]{Corollary}

\newtheorem{proposition}[theorem]{Proposition}

\newtheorem{lemma}[theorem]{Lemma}

\theoremstyle{definition}

\newtheorem{remark}[theorem]{Remark}

\newtheorem{ass}{Assumption}

\DeclareMathOperator{\E}{\mathbb{E}}

\DeclareMathOperator{\var}{\mathbf{var}}

\DeclareMathOperator{\cov}{\mathbf{cov}}

\newcommand{\dd}{\mathrm{d}}

\newcommand{\cG}{\mathcal{G}}

\newcommand{\bbe}{ \mathbf{e}}

\newcommand{\bbk}{\mathbf{k}}

\newcommand{\R}{\mathbb{R}}

\newcommand{\cI}{\mathcal{I}}

\newcommand{\cK}{\mathcal{K}}

\newcommand{\cF}{\mathcal{F}}

\newcommand{\N}{\mathbb{N}}

\newcommand{\op}{\mathrm{op}}

\renewcommand{\Pr}{\mathbb{P}}

\usepackage[style=authoryear-comp, sorting=nyt, natbib=true, dashed=false,
            backend=bibtex, maxcitenames=3, maxbibnames=99]{biblatex}
\AtEveryBibitem{\clearlist{language}\clearfield{issn}}

\bibliography{literatur}

\begin{document}

\title{Local polynomial density ratio estimation }

\author{Hajo Holzmann\footnote{Corresponding author. Prof.~Dr.~Hajo Holzmann, Fachbereich Mathematik und Informatik, Philipps-Universit\"at Marburg, Hans-Meerweinstr.~6, 35043 Marburg, Germany}\\
\small{Fachbereich Mathematik und Informatik}  \\
\small{Philipps-Universit\"at Marburg} \\
\small{holzmann@mathematik.uni-marburg.de}
\and
Alexander Meister \\
\small{Institut f\"ur Mathematik}  \\
\small{Universit\"at Rostock} \\
\small{alexander.meister@uni-rostock.de}}

\date{\today }

\maketitle

\begin{abstract}
We  propose a novel local-polynomial estimator of the ratio $r=f/g$ of two $d$-dimensional densities $f$ and $g$, from which independent samples are available. The estimator is shown to achieve pointwise minimax optimal rates over Hölder classes of arbitrary smoothness index without additional logarithmic factors, and with smoothness being only assumed of $r$ but not of $f$ nor $g$. Our analysis remains  valid for points on the boundary of the support of the distribution associated to $g$ under a mild geometric assumption on the (unknown) boundary. We also derive a concentration inequality for the estimator, which can be useful in applications to classification, and give a rate in the supremum norm, where the supremum is also taken over boundary support points. The smoothness class over which the rate is obtained is sufficiently large that the individual densities $f$ and $g$ cannot be consistently estimated uniformly over this class. Direct estimators of the partial derivatives of $r$ together with rates of convergence are also provided. 

We also obtain asymptotic normality of the estimators under quite general assumptions and again including boundary points, with a consistent variance estimator allowing for data-driven studentization.
Moreover, we show how our estimator can be used to estimate the Kullback-Leibler information, in a construction which additionally uses debiasing. We provide the parametric rate together with asymptotic normality under sufficient smoothness of $r$ relative to the dimension $d$.
\end{abstract}

\vspace{3mm}

\noindent \textit{Keywords.} asymptotic normality, density ratio estimation, local polynomials, Kullback-Leibler divergence, optimal rates of convergence

\section{Introduction}
%%%%%%%%%%%%%%%%%%%%%%

The ratio $r(x) := f(x)/g(x)$ of two Lebesgue densities $f$ and $g$ on $\mathbb{R}^d$ is an object of 
central importance in statistics and machine learning. Hence, estimating $r$ from two independent samples, $X_1,\ldots,X_n$ i.i.d.~with density $f$ and $Y_1,\ldots,Y_m$ i.i.d.~with density $g$, has been quite intensively studied, see \citet{SugiyamaSuzukiKanamori2012} for an overview. 
Here, areas of application include transfer learning under covariate shift \citep{shimodaira2000improving, gretton2009covariate, portier2023scalable, holzmann2025multivariate}, where the quotient of the marginal densities for source and target covariates  is required as a weight when calibrating loss functions from source to target; as well as importance sampling and simulator-based models \citep{delyon2016integral, thomas2022likelihood} and causal inference \citep{hirano2003efficient, lin2023estimation}. In information theory the density ratio is required for the estimation of  the Kullback-Leibler divergence and more general $f$-divergences \citep{BS23,  wang2009divergence, nguyen2010estimating, BSY19, poczos2011estimation}. The density ratio is also intrinsically related to the quotient of two intensity functions of spatial point processes, called the relative risk surface in epidemiology \citep{Bithell1990, Bithell1991, KelsallDiggle1995a, KelsallDiggle1995b, KelsallDiggle1998, HazeltonDavies2009, DaviesHazelton2010}.

\smallskip

In this paper we  set-up a novel local-polynomial estimator of the density ratio. Benefits of our method compared to previously suggested methods include: pointwise minimax optimal rates over Hölder classes of arbitrary smoothness index without additional logarithmic factors, and with smoothness being only assumed of $r$ but not of $f$ nor $g$; validity at boundary points under mild geometric assumptions on the (unknown) boundary; asymptotic normality under quite general assumptions and again including boundary points, with a consistent variance estimator allowing for data-driven studentization; and direct estimates of the partial derivatives of $r$.   

\smallskip

Let us give an overview over previously suggested methods to estimate $r$. A first, straightforward approach is to use the ratio of two density estimators. \citet{KelsallDiggle1995a, kpotufe2017lipschitz} use kernel estimators, the latter with radial kernel, and highlight the role of equal bandwidths, in which case this estimator arises as a special case of our general local polynomial density ratio estimator with order $0$. Similar approaches are the ratio of two k-nearest neighbor density estimators \citep{noshad2017direct} as well as matching estimators \citep{lin2023estimation}. These approaches can only make use of Lipschitz-smoothness at the most, with the matching estimators of \citet{lin2023estimation} and the kNN estimator of \citet{noshad2017direct} also requiring the two individual densities to be smooth. 

A second line of estimators arises from the insight that the density ratio can be realized as minimizer of an  $L_2$-contrast, giving rise to least-squares importance fitting e.g.~with basis functions  \citep{KanamoriHidoSugiyama2009,  kanamori2012statistical} or more recently feedforward neural networks \citep{XYH25}. More generally, the density ratio also arises as minimizer of suitable information-theoretic contrasts \citep{sugiyama2008direct, nguyen2010estimating, menon2016linking, zellinger2023adaptive}. In this line, most results are on global rates, e.g.~in $L_2$, often with additional logarithmic factors, while pointwise rates and asymptotic normality are not investigated.  

A third line is the regression-based or probabilistic classification-based approach, which uses artificially labeled samples (label $1$ say for the $X$-sample, and $0$ for the $Y$-sample), and then estimates the regression function $\zeta = \pi f / (\pi f + (1-\pi)\,  g)$, $\pi = n/(n+m)$, for example using a regression kNN estimator, after which one can transform back to
%
%\begin{equation*}%\label{eq:densratioregress}
 $r(x) = m\, \zeta(x)/(n\, (1-\zeta(x)))$, 
%\end{equation*} 
see \citep{menon2016linking} for relations between loss function for regression and density ratio estimation. 
 In the semiparametric density ratio model, or exponential tilt model, $\log r$ is assumed to be linear in finitely many parameters, which is equivalent to a logistic regression model  \citep{ prentice1979logistic, qin1998inferences, cheng2004semiparametric}. \citet{fan1995local, FFG98} extend this via a local likelihood approach to local logistic regression, also using local polynomials. However, a systematic pointwise analysis of rates and asymptotic normality does not seem to be available in the regression-based approach. Further, care needs to be taken of the relative sizes $n/(n+m)$ and $m/(n+m)$ of the samples, and also of the indirect nature of the estimate.

On the methodological side our approach is evidently related to local polynomial regression estimation \citep{stone1980optimal, FG96, audibert2007fast}, to the construction of 
multivariate boundary kernels in density estimation \citep{muller1999multivariate, Bertin2025adaptivedensity} as well as to local polynomial univariate density estimation and conditional density estimation \citep{cattaneo2020simple, cattaneo2024boundary}. \citet{bordino2025nonparametric} propose statistical tests for the form of the density ratio. 

Let us give an overview of the contents of the paper. In Section \ref{sec:densityratioest} we formally introduce the local polynomial density ratio estimator as well as the resulting estimator of the derivatives, and relate to both local polynomial estimation as well as to least squares importance fitting. 
Section \ref{sec:rates} contains the theoretical analysis of the rates of convergence. Assuming Hölder smoothness for $r$ but not necessarily individual smoothness of $f$ and $g$, as well as a mild additional boundary condition for boundary points we show that our estimator achieves minimax optimal rates of convergence, pointwise and in $L_2$. We also derive a concentration inequality for the estimator, which can be useful in applications to classification, and give a rate in the supremum norm, where the supremum is also taken over boundary support points. Rates for derivative estimation are also provided. Finally, we show that  our smoothness class is sufficiently large that the individual densities $f$ and $g$ cannot be consistently estimated uniformly over this class.  
Section \ref{sec:asympnorm} turns to the asymptotic distribution of the estimator. We show asymptotic normality, including at  boundary points of the support, and also propose a consistent estimator for the variance which can be used for studentization. Joint asymptotic normality of several derivative estimators is also obtained. 
In Section \ref{sec:KLest} we show how our estimator can be used to estimate the Kullback-Leibler information, in a construction which additionally uses debiasing. We provide the parametric rate together with asymptotic normality under sufficient smoothness of $r$ relative to the dimension $d$.  

Let us conclude the introduction by introducing some notation. For ${\bf k} = (k_1,\ldots,k_d) \in \N_0^d$ we let $|{\bf k}| = k_1 + \ldots + k_d$, ${\bf k}! = k_1!\, \cdot \ldots \cdot  k_d!$, and for $z = (z_1, \ldots, z_d)\in \R^d$ we let   
$z^{\bf k} = z_1^{k_1}\cdot \ldots \cdot z_d^{k_d}.$ Further let
$${\cal K}_l := \{{\bf k} = (k_1,\ldots,k_d)\in \mathbb{N}_0^d \mid |{\bf k}| \leq l\}, \qquad l \in \N_0.$$ 
For $L:\R^d \to \R$ and $h>0$  we let $L_h(z)=L(z/h)/h^d$.
We let $\| \cdot \|_{\mathrm{op}}$ denote the spectral norm of a symmetric matrix, $\| \cdot \|_F$ the Frobenius norm, and $\|\cdot \|_2$ or simply $\|\cdot \|$ the Euclidean norm of a vector.

%\section{Methodology}
%%%%%%%%%%%%%%%%%%%%%

%

\section{The local polynomial density ratio estimator}\label{sec:densityratioest}
%%%%%%%%%%%%%%%%%%%%%%%%%%%%%%%%%%%%

We describe our approach to construct a local polynomial estimator $\hat{r}(x)$ of $r(x)$ at a given point $x$, which uses similar techniques as in local polynomial regression \citep{FG96, audibert2007fast} and in local polynomial univariate density estimation and conditional density estimation \citep{cattaneo2020simple, cattaneo2024boundary}. %In particular, our results will apply to boundary points of $g$. 

To set the stage for local estimation of $r$ at $x$, for $\rho_1>0$ let ${\cal U} := {\cal U}_{\rho_1}(x) := \{z\in \mathbb{R}^d \mid \|x-z\|<\rho_1\}$. Assume that for suitable $\rho_1>0$,  
$f\,\dd\lambda^d \ll g\,\dd\lambda^d$ on ${\cal U}$, where $\lambda^d$ denotes the $d$-dimensional Lebesgue-Borel measure, and that a continuous 
version $r$ of the density of $f \,\dd\lambda^d/g \,\dd\lambda^d$ on ${\cal U}$ can be chosen, in particular, $f = r\,g$ $\lambda^d$-almost everywhere
on $\mathcal U$. At points $x$ on the boundary of the support of $g$ this requires a continuous extension of the density of $f \,\dd\lambda^d/g \,\dd\lambda^d$.

We fix the order $l \in \N_0$, which is suppressed in the notation of the estimator. 
Let 
\begin{equation} \label{eq:Deltahatapp}
\hat{r}(x;h)  \, = \,\frac1{n\, h^d}\, \sum_{j=1}^n \hat{K}\Big(\frac{x-X_j}{h};x,h\Big) \,,
\end{equation}
for a bandwidth parameter $h>0$, and with the empirical kernel function $\hat{K}(z; x,h)$ depending on $Y_1,\ldots,Y_m$, as well as on $x$ and $h$. For simplicity we often write $\hat{r}(x;h) = \hat r(x)$ and $\hat{K}(z; x,h) = \hat{K}(z)$, so that the estimator is given as
$\hat r(x) \, := \, \frac1n \sum_{j=1}^n \hat{K}_h(x-X_j). $
The empirical kernel $\hat K$ is defined as 
\begin{equation}\label{eq:empkernel}
 \hat{K}(z;x,h) \, := \,  K(z)\, \sum_{{\bf k}\in {\cal K}_l} \hat{\alpha}_{\bf k}(x;h) \cdot z^{\bf k} \,. \qquad 
 \end{equation}
Here the kernel function $K$ is assumed to be continuous and supported on the closed Euclidean  ball $B =B_1(0)$ with the center $0$ and the radius $1$, to be upper-bounded by $1$ and to satisfy $K(z)>0$ for all $z\in \mathbb{R}^d$ with $\|z\|<1$. 
The vector of weights $\hat{\alpha}$ is given by
\begin{equation} \label{eq:hatalpha} \hat{\alpha}(x;h)  \, := \,  \big(\hat{S}(x;h)\big)^{-1} {\bf e}_0\,, \end{equation}
with the $\#{\cal K}_l$-dimensional unit vector
${\bf e}_0 := (1,0,\ldots,0)^\top$, and the $\#{\cal K}_l\times\#{\cal K}_l$ Gram-matrix $\hat{S}(x;h)  = \{\hat{S}_{{\bf k},{\bf k}'}(x;h)\}_{{\bf k},{\bf k}'}$ with 
\begin{equation}\label{eq:gramemp}
\hat{S}_{{\bf k},{\bf k}'}(x;h)  \, := \,  \frac1m \sum_{j=1}^m K_h(x-Y_j) \cdot   \Big(\frac{x-Y_j}h\Big)^{\bf k+\bf k'}\,, \qquad {\bf k},{\bf k}' \in {\cal K}_l\,. 
\end{equation}
Again we also omit $x$ and $h$ in the notation and write $\hat{\alpha}$, $\hat{S} $ and $ \hat{S}_{{\bf k},{\bf k}'}$. 

Further, we note that setting
\begin{equation}\label{eq:estdensrat}
 \hat V(x;h) \, := \, \hat V \, := \,  \big(\hat V_{\bf k}(x;h)\big)_{{\bf k} \in {\cal K}_l},\qquad \hat V_{\bf k}(x;h) = \frac1n \sum_{j=1}^n {K}_h(x-X_j)\, \Big(\frac{x-X_{j}}h\Big)^{\bf k}, \quad {\bf k} \in {{\cal K}_l}, 
 \end{equation}
we may write
\begin{equation}\label{eq:esmatvec}
 \hat{r}(x) = {\bf e}_0^\top\, \hat{S}^{-1}\, \hat V.
\end{equation}
Finally, if $\hat S$ is not invertible we let $\hat r(x) = 1$. 

\smallskip

\begin{remark}[Population version]
The estimator is built from the empirical quantities $\hat S$ from \eqref{eq:gramemp} and $\hat V$ from \eqref{eq:estdensrat}. 
Consider the expectations
\begin{equation}\label{eq:Sdef}
 S = \E[\hat S] =  \{S_{{\bf k},{\bf k}'}\}_{{\bf k},{\bf k}'\in {{\cal K}_l}},\qquad \text{so}\quad S_{{\bf k},{\bf k}'} \, = \, \int K_h(x-y) \,  \Big(\frac{x-y}h\Big)^{\bf k+\bf k'} \, g(y)\, \dd y\, 
 \end{equation}
and 
\begin{equation} \label{eq:V} V = \E[\hat V] = \{V_{\bf k}\}_{{\bf k}\in {{\cal K}_l}}, \qquad \text{so}\quad V_{\bf k} = \int K_h(x-z) \, \Big(\frac{x-z}h\Big)^{\bf k}\,f(z)\, \dd z. \end{equation}
\begin{ass}[Boundedness]\label{ass:boundedness}
    For suitable $\rho_1>0$, and ${\cal U} = {\cal U}_{\rho_1}(x)$, we assume first that $$\sup_{y\in {\cal U}} \max\big\{f(y),g(y)\big\} \leq \rho_2$$ and second that there exist constants $\rho_3,\rho_4, \rho_5 \in (0,\infty)$ such that
\begin{equation} \label{eq:Boundary} 
\lambda_d\big(\big\{y\in \mathbb{R}^d \mid \|y-x\| < \varepsilon \, , \, g(y)\geq \rho_3\big\}\big) \, \geq \, \rho_4 \cdot \varepsilon^d \, , \quad \forall \varepsilon \in (0,\rho_5)\,. \end{equation}
%for constants .  
\end{ass}
This geometric condition includes boundary points $x$ with respect to the support of $g$, see Condition (X2) in \citet{viel2025convergence} for similar constraints. Note that \eqref{eq:Boundary} implies that the value $r(x)$ of the continuous version of $r$ is uniquely defined at $x$. 
%
%\hajo{Discuss special case of convex support of $g$ (locally at $x$)}
\begin{lemma}\label{lem:lowerboundeigen}
    Under the condition \eqref{eq:Boundary} from Assumption \ref{ass:boundedness} the matrix $S$ in \eqref{eq:Sdef} is positive definite, with smallest eigenvalue $\lambda_{\min}(S) \geq \underline{\lambda}>0$, with $\underline{\lambda}$ uniform in $h \in (0, \rho_5)$ and only depending on $\rho_3$, $\rho_4$, $d$ and $l$. 
\end{lemma}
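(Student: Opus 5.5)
We bound the quadratic form of $S$ from below on the unit sphere. For $a=(a_{\bf k})_{{\bf k}\in{\cal K}_l}\in\R^{\#{\cal K}_l}$ with $\|a\|=1$, write $P_a(z)=\sum_{{\bf k}\in{\cal K}_l} a_{\bf k} z^{\bf k}$. Substituting $z=(x-y)/h$ in \eqref{eq:Sdef} gives
\begin{equation*}
a^\top S a \,=\, \int K_h(x-y)\, P_a\Big(\frac{x-y}{h}\Big)^2 g(y)\,\dd y \,=\, \int K(z)\, P_a(z)^2\, g(x-hz)\,\dd z .
\end{equation*}
Since the integrand is nonnegative, we may restrict to $A_h:=\{z\in B : g(x-hz)\ge \rho_3\}$. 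Hence
\begin{equation*}
a^\top S a \;\ge\; \rho_3 \int_{A_h} K(z) P_a(z)^2\,\dd z .
\end{equation*}
By \eqref{eq:Boundary} with $\varepsilon=h<\rho_5$ and the change of variables $y=x-hz$, the set $A_h\cap\{\|z\|<1\}$ has Lebesgue measure at least $\rho_4$. We want to shrink it away from the boundary of $B$, where $K$ may vanish. Apply \eqref{eq:Boundary} again with $\varepsilon=h$, together with the trivial bound $\lambda_d(\{\|z\|\in[1-\delta,1)\})=\omega_d(1-(1-\delta)^d)$, where $\omega_d$ is the volume of $B$. Choosing $\delta=\delta(d,\rho_4)$ so that this annulus has measure at most $\rho_4/2$, we get $\lambda_d(A_h\cap B_{1-\delta}(0))\ge\rho_4/2$. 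On $B_{1-\delta}(0)$ we have $K\ge\kappa:=\min_{\|z\|\le 1-\delta}K(z)>0$ by continuity and positivity. Thus
\begin{equation*}
a^\top S a\ \ge\ \rho_3\,\kappa \int_{E} P_a(z)^2\,\dd z ,
\end{equation*}
for some measurable $E\subset B$ with $\lambda_d(E)\ge\rho_4/2$. The constant $\kappa$ depends on $K$ as well; since $K$ is fixed, this is implicit in the statement.

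The main step is a uniform lower bound
\begin{equation*}
c(d,l,\rho_4):=\inf\Big\{\int_E P_a^2 \,:\, \|a\|=1,\ E\subset B \text{ measurable},\ \lambda_d(E)\ge \rho_4/2\Big\}>0 .
\end{equation*}
The infimum is over the sets $E$ as well, which is the delicate point. I would use a Remez-type (small-ball) inequality for polynomials. For polynomials of degree at most $l$ on $B$, a simple compactness argument suffices. For $\eta>0$, the map $a\mapsto\lambda_d(\{z\in B:|P_a(z)|<\eta\})$ can be bounded via $\sup_{\|a\|=1}\lambda_d(\{|P_a|<\eta\})\to 0$ as $\eta\to0$. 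This holds because each nonzero polynomial has a null zero set, and upper semicontinuity in $a$ on the compact sphere gives uniformity: if it failed, take $a_j\to a$ with $\lambda_d(\{|P_{a_j}|<1/j\})\ge c_0$; then $\lambda_d(\{|P_a|<\eta\})\ge c_0$ for every $\eta$ by uniform convergence of $P_{a_j}\to P_a$ on $B$, contradicting $\lambda_d(\{P_a=0\})=0$. Choose $\eta$ with $\sup_a\lambda_d(\{|P_a|<\eta\})\le\rho_4/4$. Then $\lambda_d(E\cap\{|P_a|\ge\eta\})\ge\rho_4/4$, so $\int_E P_a^2\ge \eta^2\rho_4/4$. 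Alternatively, one can cite the explicit Remez inequality $\sup_B|P|\le (C_d/\lambda_d(E))^{l}\sup_E|P|$ combined with a Nikolskii-type argument; the compactness route is cleaner.

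Combining the two steps gives $\lambda_{\min}(S)\ge \underline\lambda:=\rho_3\,\kappa\,\eta^2\rho_4/4>0$. This bound does not depend on $h\in(0,\rho_5)$ and is determined by $\rho_3,\rho_4,d,l$ (and the fixed kernel $K$). The only real obstacle is the uniformity over all sets $E$ in the polynomial small-ball bound, which the compactness argument above handles.
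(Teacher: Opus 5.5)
Your proof is correct, but it organizes the compactness argument differently from the paper.

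The paper bounds $\alpha^\top S\alpha$ below by $\rho_3\inf_{G,P}\int_G K|P|^2$. The infimum runs over all Borel $G\subset B$ with $\lambda^d(G)\ge\rho_4$ and all unit-coefficient polynomials $P$, and the paper argues by contradiction in one sweep. It takes a convergent subsequence of polynomials and passes to the uniform probability measures on the $G_m$, whose densities are bounded by $\rho_4^{-1}$. Prokhorov's theorem then gives a weak limit $\mu$ concentrated on a compact null set, and the cut-off functions $d_k$ show that such limits cannot charge a null set.

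You instead separate the two sources of degeneracy. First you remove the kernel by restricting to an inner ball $B_{1-\delta}(0)$ where $K\ge\kappa>0$. Then you prove the uniform sublevel-set estimate $\sup_{\|a\|=1}\lambda_d(\{z\in B:|P_a(z)|<\eta\})\to 0$. Since this estimate does not involve $E$, the infimum over sets, which is the delicate part in the paper, becomes a one-line measure subtraction.

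Your route buys three things:
\begin{itemize}
\item It avoids weak convergence of measures.
\item It gives the explicit form $\underline\lambda=\rho_3\kappa\eta^2\rho_4/4$, which could be made quantitative via a Remez- or Carbery--Wright-type bound for $\eta$.
\item It handles $\partial B$, where $K$ vanishes, explicitly. In the paper the limit measure is really concentrated on the zero set of $KP^2$, which also contains $\partial B$. That set is still null, so the paper's argument survives, but your version makes the point transparent.
\end{itemize}
The paper's route, in exchange, needs no separate kernel step.

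One wording issue: \emph{upper semicontinuity in $a$} is imprecise, since for fixed $\eta$ the map $a\mapsto\lambda_d(\{|P_a|<\eta\})$ is in general only lower semicontinuous. The sequential argument you actually give is nonetheless rigorous. It uses monotonicity in $\eta$, a convergent subsequence $a_j\to a$, the inclusion $\{|P_{a_j}|<1/j\}\subset\{|P_a|<\eta\}$ for large $j$, and continuity from above of $\lambda_d$ on $B$. As in the paper's own proof, your constant also depends on $K$.
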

This is restated as Lemma \ref{lem:Slowereigagain} in Section \ref{sec:somepreplemmas}, which is proved in Section \ref{sec:proofslemmas}. %We may assume $\underline{\lambda} \leq 1$ in the following. 
% {sec:proofremaingrates}
Since by Lemma \ref{lem:lowerboundeigen} the matrix $S$ is positive definite, we can set
\begin{equation}\label{eq:barrx}
 \bar r(x) := {\bf e}_0^\top S^{-1} V 
 \end{equation}
The overall estimation error $\hat r(x) - r(x)$ is then decomposed into the approximation error $\bar r(x) - r(x)$, which we bound in
Proposition \ref{prop:bias} below, as well as the stochastic error $\hat r(x) - \bar r(x)$.  
\end{remark}

\begin{remark}[Order $l=0$: ratio of kernel density estimators with equal bandwidths]%\label{rem:l0}
For $l=0$ we get $\mathcal K_0=\{0\}$, hence
$\hat S=\frac1m\sum_{j=1}^m K_h(x-Y_j)$ and $\hat V=\frac1n\sum_{i=1}^n K_h(x-X_i)$, so that
\[
  \hat r(x)\;=\;\frac{\frac1n\sum_{i=1}^{n}K_h(x-X_i)}
                     {\frac1m\sum_{j=1}^{m}K_h(x-Y_j)}.
\]
Thus the estimator $\hat r$ reduces to the ratio of two kernel density estimators, both with 
kernel $K$ and with the same bandwidth $h$, as suggested in  \citet{KelsallDiggle1995a,KelsallDiggle1995b}, see also \citet{kpotufe2017lipschitz}. Note that we do not need to assume that $K$ is a normalized probability density. %We  Note that the normalising constant of $K$
%cancels, so that $K$ need not integrate to one.
%
\end{remark}

\begin{remark}[Local least-squares]\label{rem:localLS}
The estimator in  \eqref{eq:esmatvec} can be derived from a local least squares criterion which makes its analogy to local polynomial regression particularly transparent. %and makes it transparent why smoothness of $r$ alone --
%and none of $f$ and $g$ -- is required. 
Let
\[
  Q_\theta(z)\,=\, \theta^\top \, Q(z) \,=\, \sum_{{\bf k}\in {\cal K}_l}\theta_{\bf k}\,z^{\bf k},
  \qquad z\in\mathbb R^d,
\]
for a coefficient vector $\theta=(\theta_{\bf k})_{{\bf k}\in {\cal K}_l}$ and $Q(z)=(z^{\bf k})_{{\bf k}\in {\cal K}_l}$, and consider 
\begin{equation}\label{eq:M}
  {\cal M}(\theta)\,=\,\int \,
     \Big(\,r(z)-Q_\theta\Big(\frac{x-z}{h}\Big)\Big)^{2}\, K_h(x-z)\, g(z)\,\dd z .
\end{equation}
To minimize ${\cal M}(\theta)$, expanding the square and using $f = r\, g$ we obtain
\begin{equation}\label{eq:Mexp}
  {\cal M}(\theta)\,=\,\theta^\top S\,\theta - 2\,\theta^\top V + \int K_h(x-y)r^2(y)g(y)\,\dd y,
\end{equation}
with $S$ as  defined in \eqref{eq:Sdef} and $V$ in \eqref{eq:V}. Since by Lemma \ref{lem:lowerboundeigen}, $S$ is positive definite under assumption \eqref{eq:Boundary},  ${\cal M}(\theta)$ has the unique minimizer $\bar \theta = S^{-1} \, V$, resulting in the approximation 
$ Q_{\bar \theta}(0) = {\bf e}_0^\top S^{-1} \, V=\bar r(x)$
of $r(x)$.
%The step from \eqref{eq:Qh} to \eqref{eq:Qhquad} uses the identity $r\,g=f$,
%which removes the unknown $r$ from the $\theta$-dependent part of the criterion:
%the quadratic term is an expectation with respect to $g$, the linear term one
%with respect to $f$, and both are estimated by plain sample averages.
%

To connect to local polynomial regression, for a response - predictor pair $(Y,X)$  with density $f_{Y,X}(y,z)$ w.r.t.~$\nu \otimes \lambda^d$, let $f_X$ denote the marginal density of $X$, and $m(z)=E[Y\mid X=z]$ the regression function. The analogous criterion to \eqref{eq:M} then is
\begin{align}
  {\cal M}_r(\theta)& \,=\,\int \int \,
     \Bigl(\,y-Q_\theta\Big(\frac{x-z}{h}\Big)\Bigr)^{2}\, K_h(x-z)\, f_{Y,X}(x,z)\,\dd \nu(y)\, \dd z \nonumber\\
     & \,=\,\int \,
      Q_\theta^2\,\Big(\frac{x-z}{h}\Big)\, K_h(x-z)\, f_{X}(z)\, \dd z \, -2 \, \int \,
      m(z)\, Q_\theta\,\Big(\frac{x-z}{h}\Big)\, K_h(x-z)\, f_{X}(z)\, \dd z \, + \, \text{const},\label{eq:Mr}
\end{align}
where the linear term arises from $\int y\,f_{Y,X}(y,z)\,\dd \nu(y)=m(z)\, f_X(z)$, which is analogous to $f=r\,g$, and the final constant term does not depend in $\theta$.  In local polynomial
regression, however, sample versions of both linear and quadratic terms in \eqref{eq:Mr} involve the same data (the covariates), while in \eqref{eq:Mexp},  $\hat S$ uses $Y_1,\dots,Y_m$ and
$\hat V$ uses $X_1,\dots,X_n$. 

Now the population quantities in \eqref{eq:M} which involve $\theta$ can be estimated by empirical counterparts, which after dropping the third term - which does not depend on $\theta$ - yields
\begin{equation}\label{eq:Memp}
  \widehat {\cal M}(\theta)
  \;=\;\frac1m\sum_{j=1}^{m} K_h(x-Y_j)\,
        Q_\theta^2\Bigl(\frac{x-Y_j}{h}\Bigr)
     \;-\;\frac2n\sum_{i=1}^{n} K_h(x-X_i)\,
        Q_\theta\Bigl(\frac{x-X_i}{h}\Bigr)
  \;=\;\theta^\top\hat S\,\theta-2\,\theta^\top\hat V .
\end{equation}
If $\hat S$ is non-singular and hence positive definite, the unique minimizer of 
$\widehat {\cal M}$ is $\hat\theta=\hat S^{-1}\hat V$, yielding the estimator 
$Q_{\hat\theta}(0)={\bf e}_0^\top\hat S^{-1}\hat V = \hat r(x)$ in \eqref{eq:esmatvec}. 

Minimizing least-squares criteria such as \eqref{eq:Memp} is known in the machine learning literature as Least-Squares Importance Fitting (LSIF), see e.g.~\citet{KanamoriHidoSugiyama2009}, the monograph
\citet{SugiyamaSuzukiKanamori2012}, and \citet{XYH25} for a recent contribution. However, the kernel-localized version that we focus on apparently to the best of our knowledge has not been previously studied.

\end{remark}

%\begin{rem}[Analogy to local polynomial regression estimation]
%
%Let us describe the analogy of density ratio and regression local polynomial estimation. Here suppose that $(Y,X), (Y_1, X_1), \ldots, (Y_n, X_n)$ are i.i.d.~with $Y$ being the real-valued response with finite expected value, $X$ the $d$-dimensional covariates with Lebesgue density $f_X$. Let $m(x) = \E[Y \mid X=x]$ denote the regression function. Suppose that $(Y,X)$ has density $f_{Y,X}$ w.r.t.~the product measure $\nu \otimes \lambda^d$, where $\nu$ is a $\sigma$-finite measure. Then $m(x) = \int_\R y\, f_{Y,X}(y,x)\, \dd \nu(y) / f_X(x)$ is a quotient of two functions, and the local polynomial regression estimator of order $l$ is of the form $\hat m(x) = {\bf e}_0^\top\, \hat{S}^{-1}\, \hat V$, with the Gram matrix $\hat{S}$ as in \eqref{eq:gramemp} but using  covariates $X$, and $\hat V_{\bf k}(x;h) = \frac1n \sum_{j=1}^n Y_j\, {K}_h(x-X_j)\, \Big(\frac{x-X_{j}}h\Big)^{\bf k}$, $ {\bf k} \in {\cal K}_l$.
%
%\end{rem}
%

\begin{remark}[Derivative estimation]
In contrast to LSIF and nearest neighbor methods, the local polynomial estimation approach also allows for direct estimation of the partial derivatives of the density ratio $r$: for ${\bf k} \in {\cal K}_l$ we let 
\begin{equation}\label{eq:derest}
\hat r_{{\bf k}}(x)  = \frac{{\bf k}!}{(-h)^{|{\bf k}|}}\, {\bf e}_{{\bf k}}^\top\, \hat{S}^{-1}\, \hat V
\end{equation}
with $\hat S$ and $\hat V$ defined in \eqref{eq:gramemp} and \eqref{eq:estdensrat}.
\end{remark}

\section{Rates of convergence} \label{sec:rates}
%%%%%%%%%%%%%%%%%%%%%%%%%%%%%%%%%%%%%%%%%%%%%%%%%%%%%%%%%%%%%%%%%%%%%

Here we establish the convergence rates for the estimation of the density ratio $r$ over Hölder smoothness classes. The proofs of Proposition \ref{prop:bias}, Theorem \ref{T:1} and Corollary \ref{C:1} below are provided in Section \ref{sec:proofsthrates}, while further proofs for results in this section are deferred to Section \ref{sec:proofremaingrates}.  

Let's start by introducing the Hölder classes that we consider. Assume that for a fixed $\beta>0$, $r$ is $(\lceil\beta\rceil-1)$-fold continuously differentiable on ${\cal U}$, where ${\cal U} = {\cal U}_{\rho_1}(x)$ is as in Assumption \ref{ass:boundedness} which is  assumed to hold in the following. Moreover, 
assume that $r$ and its partial derivatives 
$$r^{({\bf k})} := \partial^{|{\bf k}|}r / (\partial z_1^{k_1}\cdots \partial z_d^{k_d}), \qquad  
{\bf k}  \in {\cal K}_{\lceil\beta\rceil-1},$$ 
are bounded on ${\cal U}$ by some fixed constant $\rho_6$ and that
\begin{equation} \label{eq:Hoelder} \big|r^{({\bf k})}(y) - r^{({\bf k})}(x)\big| \, \leq \, \rho_6 \cdot \|x-y\|^{\beta - \lceil\beta\rceil + 1}\, , \quad \forall \, y\in {\cal U}\,, {\bf k} \in \mathbb{N}_0^d\mbox{ with }\sum_{j=1}^d k_j = \lceil\beta\rceil -1\,. \end{equation} 
%
%Thus at points on the boundary of $g$ we require  
%\hajo{Briefly mention boundary points, restrictions on support of $g$ to allow $\beta$-Hölder extensions of $r$  }

The tuples of densities $(f,g)$ which satisfy both the boundedness Assumption \ref{ass:boundedness} as well as these Hölder smoothness assumptions by definition are the elements of the class 
$${\cal F}(x):= {\cal F}(x,\beta,d,\rho_1,\rho_2,\rho_3,\rho_4,\rho_5,\rho_6).$$ 
Then for the approximation bias of $\bar r(x)$ in \eqref{eq:barrx} we have the following.  

\begin{proposition}[Approximation bias]\label{prop:bias}
Suppose that the order satisfies $l \ge \lceil\beta\rceil-1$. Then there is a
constant $C>0$ depending only on $d$ and $l$ such that for $(f,g) \in {\cal F}(x)$, 
\begin{equation}\label{eq:boundbiasfinal}
 \big|\bar r(x) - r(x)\big|  \leq C\, \rho_2\, \rho_6 \, \underline{\lambda}^{-1} \cdot h^\beta,  \qquad h\in\big(0,\min(\rho_1,\rho_5)\big).
\end{equation}
\end{proposition}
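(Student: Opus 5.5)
The plan is to use the reproducing property of local polynomial weights together with a Taylor expansion of $r$ around $x$. Write $z = (x-y)/h$ for $y \in {\cal U}$. Since $l \ge \lceil\beta\rceil-1$, Taylor's formula with the Hölder condition \eqref{eq:Hoelder} gives, for $\|x-y\|\le h<\rho_1$,
\[
r(y) \,=\, \sum_{{\bf k}\in{\cal K}_{\lceil\beta\rceil-1}} \frac{r^{({\bf k})}(x)}{{\bf k}!}\,(y-x)^{\bf k} + R(y), \qquad |R(y)| \le C_1\,\rho_6\, h^\beta ,
\]
where $C_1$ depends only on $d$ and $\beta$. This uses the mean-value form of the remainder at order $\lceil\beta\rceil-1$, in which the top-order derivatives are evaluated at an intermediate point and then compared with their values at $x$ through \eqref{eq:Hoelder}. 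The main polynomial can be rewritten as $Q_{\theta^*}(z)$ with $\theta^*_{\bf k} = r^{({\bf k})}(x)(-h)^{|{\bf k}|}/{\bf k}!$, padded with zeros up to ${\cal K}_l$; in particular $\theta^*_{\bf 0} = r(x)$.

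Next we exploit that $S$ reproduces polynomials. Because $f = r g$ a.e.\ on ${\cal U}$ and $K_h(x-\cdot)$ is supported in the ball of radius $h<\rho_1$, we have
\[
V_{\bf k} = \int K_h(x-y)\,\Big(\tfrac{x-y}{h}\Big)^{\bf k} r(y)\, g(y)\,\dd y = (S\theta^*)_{\bf k} + \int K_h(x-y)\,\Big(\tfrac{x-y}{h}\Big)^{\bf k} R(y)\, g(y)\,\dd y .
\]
Write $W$ for the remainder vector on the right. Then $\bar r(x) = {\bf e}_0^\top S^{-1}V = \theta^*_{\bf 0} + {\bf e}_0^\top S^{-1}W = r(x) + {\bf e}_0^\top S^{-1}W$.

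To bound the error, use Lemma \ref{lem:lowerboundeigen}, valid for $h<\rho_5$, to get $|{\bf e}_0^\top S^{-1}W| \le \|S^{-1}\|_{\op}\|W\| \le \underline{\lambda}^{-1}\|W\|$. For each component, on the support we have $|((x-y)/h)^{\bf k}|\le 1$, $K\le 1$, and $g\le\rho_2$, so
\[
|W_{\bf k}| \le \rho_2\, C_1\rho_6 h^\beta \int K_h(x-y)\,\dd y \le \rho_2\, C_1\rho_6\, h^\beta\, \lambda_d(B).
\]
Hence $\|W\| \le (\#{\cal K}_l)^{1/2}\lambda_d(B)\, C_1\,\rho_2\rho_6\, h^\beta$, which yields \eqref{eq:boundbiasfinal}.

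The main obstacle is the Taylor step. The constant must depend only on $d$ and $l$; since $\beta\le l+1$, the dependence on $\lceil\beta\rceil$ can be absorbed by maximizing over the finitely many orders up to $l+1$. A further subtlety is that near boundary points the segment from $x$ to $y$ must lie in ${\cal U}$. This holds because ${\cal U}$ is a ball, hence convex, and $r$ is assumed $C^{\lceil\beta\rceil-1}$ on all of ${\cal U}$, not only on the support of $g$. The point of the argument is that no smoothness of $f$ or $g$ is used: $g$ enters only through the bound $g\le\rho_2$ and through $S$.
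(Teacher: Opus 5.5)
Your proof is correct and takes essentially the same route as the paper. The paper phrases it through the equivalent kernel $\bar K$ built from $\bar\alpha = S^{-1}{\bf e}_0$. Its moment identities $T_{\bf k} = {\bf e}_{\bf k}^\top S\bar\alpha = 1({\bf k}={\bf 0})$ annihilate the Taylor polynomial, which is your matrix identity $\bar r(x) = \theta^*_{\bf 0} + {\bf e}_0^\top S^{-1}W$ in different notation; the paper then bounds the (integral-form rather than Lagrange-form) remainder via $\|\bar\alpha\|\le\underline{\lambda}^{-1}$, $g\le\rho_2$ and the H\"older condition at $x$, exactly as you do.
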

In \eqref{eq:boundbiasfinal}, the restriction $h<\rho_5$ is needed for Lemma~\ref{lem:lowerboundeigen} and $h<\rho_1$ for the Taylor expansion on $\mathcal U$ from Assumption \ref{ass:boundedness}.  

Next we consider the mean squared error at a fixed point $x\in \mathbb{R}^d$ under the smoothness constraints $(f,g)\in {\cal F}(x)$ and the mean integrated squared error under a global smoothness condition. 
To bound the expected value we require a trimmed version of the estimator $\hat r(x)$,
\begin{equation}\label{eq:esttrimmed}
 \hat r^*(x) = \hat r(x) \, 1\big( \lambda_{\min}(\hat S) > a_m\big) + 1\big( \lambda_{\min}(\hat S) \leq a_m\big)
 \end{equation}
for a deterministic sequence $a_m \downarrow 0$. %\hajo{highlight necessity of trimming for MSE}

\begin{theorem}[Bound on the mean squared error] \label{T:1}
Suppose that the order $l \geq \lceil\beta\rceil-1$, that $a_m \leq \underline{\lambda}/2$, where $\underline{\lambda}$ is from Lemma \ref{lem:lowerboundeigen}, and that $h \in (0, \min(\rho_1, \rho_5))$.  Then,
\begin{equation}\label{eq:finitesamplebound}
    \sup_{(f,g) \in{\cal F}(x)} \, \E_{f,g}\big[ \big|\hat r^*(x) - r(x)\big|^2\big] \, \leq \, C\,\Big(   \frac1{n h^d} + \frac1{m h^d} +  a_m^{-2} \, \exp(-c\, m\, h^d) \big( 1+ \frac1{n h^d}\big)\, + h^{2 \beta}\,\Big),
\end{equation}
where the constants $C = C(\rho_2,\rho_3,\rho_4, \rho_6, l, d, \beta, K)$ and $c$ depend only on $\rho_2,\rho_3,\rho_4, \rho_6$, $l$, $\beta$, $K$ and $d$, but not on $x, n,m, h$ nor on $(f,g) \in \mathcal{F}$. 
\end{theorem}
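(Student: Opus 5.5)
We split according to the event $A := \{\lambda_{\min}(\hat S) > a_m\}$ and use the decomposition $\hat r(x) - r(x) = (\hat r(x) - \bar r(x)) + (\bar r(x) - r(x))$. The bias term is controlled by Proposition~\ref{prop:bias}, which gives $|\bar r(x) - r(x)|^2 \le C h^{2\beta}$ uniformly over ${\cal F}(x)$.

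On $A^c$ we have $\hat r^*(x) = 1$. Since $|r(x)| \le \rho_6$, this part contributes at most $(1+\rho_6)^2\,\Pr(A^c)$. To bound $\Pr(A^c)$, note first that $\lambda_{\min}(\hat S) \ge \lambda_{\min}(S) - \|\hat S - S\|_{\op}$. Together with Lemma~\ref{lem:lowerboundeigen} and $a_m \le \underline\lambda/2$, this gives $A^c \subset \{\|\hat S - S\|_F \ge \underline\lambda/2\}$. Each entry $\hat S_{{\bf k},{\bf k}'}$ is an average of i.i.d.\ terms bounded by $h^{-d}$, since $K\le 1$ and $|z^{\bf k}|\le 1$ on $B$. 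Their variances are bounded by $\rho_2 \|K\|_1 h^{-d}$, using $g \le \rho_2$ on ${\cal U}$ for $h<\rho_1$. Bernstein's inequality and a union bound over the finitely many entries therefore give $\Pr(A^c) \le C\exp(-c\,m h^d)$. This is dominated by the term $a_m^{-2}\exp(-cmh^d)$, because $a_m \le \underline\lambda/2$ is bounded.

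On $A$ we write
\[
\hat r(x) - \bar r(x) = {\bf e}_0^\top \hat S^{-1}(\hat V - V) + {\bf e}_0^\top(\hat S^{-1} - S^{-1})V,
\qquad \hat S^{-1} - S^{-1} = \hat S^{-1}(S - \hat S)S^{-1}.
\]
For the second term, $\|V\| \le C\rho_2$ and $\|S^{-1}\|_{\op} \le \underline\lambda^{-1}$. The difficulty is $\|\hat S^{-1}\|_{\op}$, which on $A$ is only bounded by $a_m^{-1}$. We therefore split $A$ further into $A_1 := \{\|\hat S - S\|_{\op} \le \underline\lambda/2\}$ and $A\setminus A_1$.
\begin{itemize}
\item On $A_1$ we have $\|\hat S^{-1}\|_{\op} \le 2/\underline\lambda$. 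Hence $|\hat r - \bar r|^2 \le C(\|\hat V - V\|^2 + \|\hat S - S\|_F^2)$, and taking expectations with the variance bounds $\E\|\hat V - V\|^2 \le C/(nh^d)$ and $\E\|\hat S-S\|_F^2 \le C/(mh^d)$ yields the first two terms.
\item On $A\setminus A_1$ we use $\|\hat S^{-1}\|_{\op}\le a_m^{-1}$. Then
\[
|\hat r - \bar r|^2 \le 2a_m^{-2}\big(\|\hat V-V\|^2 + C\|\hat S - S\|_F^2\big) \le C a_m^{-2}\big(\|\hat V - V\|^2 + 1\big),
\]
where $\|\hat S - S\|_F$ is bounded by a crude constant. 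Indeed, $\|\hat S\|_F$ is bounded because on $A$ we have $\lambda_{\min}(\hat S)>0$, and in fact $\hat S$ entries are averages of $K_h(\cdot)$ times bounded monomials. To handle the unboundedness in $h^{-d}$, it is better to keep $\|\hat S - S\|_F^2$ and use $\|\hat S-S\|_F \le \|\hat S\|_F + C$.
\end{itemize}

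The key observation for the last case is that $\hat V$ depends only on the $X$-sample and $A_1^c$ only on the $Y$-sample, so they are independent. Hence
\[
\E\big[\|\hat V - V\|^2 1_{A_1^c}\big] = \E\|\hat V - V\|^2 \,\Pr(A_1^c) \le \frac{C}{nh^d}\, e^{-cmh^d}.
\]
The $\hat S$-part is handled by Cauchy–Schwarz, $\E[\|\hat S - S\|_F^2 1_{A_1^c}] \le (\E\|\hat S-S\|_F^4)^{1/2}\Pr(A_1^c)^{1/2}$. Here the fourth moment is $O((mh^d)^{-2} + (mh^d)^{-1})$, i.e.\ bounded when $mh^d\gtrsim 1$. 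When $mh^d$ is small, the right-hand side of \eqref{eq:finitesamplebound} is already of order $\ge 1/(mh^d)$, and the claim follows from a trivial bound, adjusting $c$. This gives the contribution $a_m^{-2}e^{-cmh^d}(1 + 1/(nh^d))$.

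The main obstacle is exactly this handling of $\hat S^{-1}$ off the good event. It is resolved by two things: the trimming, which gives the $a_m^{-1}$ bound, and the independence of the two samples, which lets the $1/(nh^d)$ factor multiply the exponential probability. A careful Bernstein argument with constants that are uniform over ${\cal F}(x)$ completes the proof.
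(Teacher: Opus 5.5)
Your argument is correct. Its skeleton matches the paper's: the bias via Proposition~\ref{prop:bias}, a good event on which $\|\hat S^{-1}\|_{\op}\le 2/\underline{\lambda}$, and trimming plus independence of the two samples off that event. Where you differ is the treatment of the bad event.

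The paper does not linearize there. On $\{a_m<\lambda_{\min}(\hat S)\le\underline{\lambda}/2\}$ it bounds $|{\bf e}_0^\top\hat S^{-1}\hat V|\le a_m^{-1}\|\hat V\|_2$ and $|\bar r(x)|\le\underline{\lambda}^{-1}\|V\|_2$ directly. Independence then gives $\E\big[\|\hat V\|_2^2\,1(\lambda_{\min}(\hat S)\le\underline{\lambda}/2)\big]=\E\|\hat V\|_2^2\,\Pr(\lambda_{\min}(\hat S)\le\underline{\lambda}/2)$. Combined with Lemma~\ref{lem:boundsterms} and the matrix Chernoff bound of Lemma~\ref{lem:tsybakov2}, this produces the term $a_m^{-2}\mathrm{e}^{-cmh^d}(1+1/(nh^d))$ in one step.

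You instead keep the expansion through $\hat S-S$ on $A\setminus A_1$ as well. The $\hat V$-part factorizes as in the paper. But $\|\hat S-S\|_F^2$ is correlated with $A_1^c$, and that is what forces your Cauchy--Schwarz/fourth-moment step and the separate regime $mh^d\lesssim 1$. In that regime, the ``trivial bound'' you need is exactly the paper's direct bound.

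So the paper's route is shorter and needs no case distinction, while yours is more mechanical but valid. Your Weyl-plus-entrywise-Bernstein bound on $\Pr(A_1^c)$ is a legitimate, more elementary substitute for matrix Chernoff; the paper uses the same device in the proof of Lemma~\ref{lem:uniformbounds}. Your $c$ then also depends on $\rho_2$, which the statement allows.

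Two points need cleaning up.

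First, the remarks that $\|\hat S-S\|_F$ is bounded by a crude constant, and that $\|\hat S\|_F$ is bounded because $\lambda_{\min}(\hat S)>0$ on $A$, are false. Entries of $\hat S$ can be of size $h^{-d}$, and a lower bound on the smallest eigenvalue says nothing about the norm. Your final argument does not use these remarks, so delete them.

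Second, for $mh^d\le 1$ the justification is not that the right-hand side of \eqref{eq:finitesamplebound} is $\gtrsim 1/(mh^d)$. The trivial bound is
\[
\E|\hat r^*(x)-r(x)|^2\le 2a_m^{-2}\E\|\hat V\|_2^2+2+2\rho_6^2\lesssim a_m^{-2}\big(1+1/(nh^d)\big),
\]
which can be far larger than $1/(mh^d)$ when $a_m$ is small. It is absorbed by the term $a_m^{-2}\mathrm{e}^{-cmh^d}(1+1/(nh^d))$, because $\mathrm{e}^{-cmh^d}\ge\mathrm{e}^{-c}$ in this regime and $a_m^{-2}\ge 4/\underline{\lambda}^2$. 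It is therefore $C$, not $c$, that gets adjusted.
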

\begin{corollary}[Rates of convergence] \label{C:1}
Suppose that $l \geq \lceil\beta\rceil-1$, that $h \asymp \min\{m,n\}^{-1/(2\beta+d)}$, and that the trimming sequence $a_m \downarrow 0$ satisfies $\log(1/a_m) = o\big(m^{2 \beta/ ( 2 \beta + d)} \big)$, e.g.~$a_m = m^{-\gamma}$ for any $\gamma >0$ will do. Then as $\min\{m,n\} \to \infty$, 
$$ \sup_{(f,g) \in{\cal F}(x)} \, \mathbb{E}_{f,g} \big[\big|\hat{r}^*(x) - r(x)\big|^2\big] \, = \, {\cal O}\big(\min\{m,n\}^{-2\beta/(2\beta+d)}\big)\,.$$
Furthermore, let ${\cal G}$ be a compact subset of $\mathbb{R}^d$. Then,
$$ \sup_{(f,g) \in{\cal F}({\cal G})} \, \mathbb{E}_{f,g} \Big[\int_{\cG} \big|\hat{r}^*(x) - r(x)\big|^2 \, \dd x\Big] \, = \, {\cal O}\big(\min\{m,n\}^{-2\beta/(2\beta+d)}\big)\,,$$
where ${\cal F}({\cal G}) := {\cal F}({\cal G},\beta,d,\rho_1,\rho_2,\rho_3,\rho_4,\rho_5,\rho_6)$ is given by 
\begin{equation}\label{eq:funcclassG}
  {\cal F}({\cal G}) \, := \, \Big\{(f,g) \mid (f,g) \in \bigcap_{x\in {\cal G}}{\cal F}(x) \Big\}\,.
% {\cal F}({\cal G}) \, := \, \Big\{(f,g) \mid (f,g) \in \bigcap_{x\in {\cal G}}{\cal F}(x)  \mbox{ and }  g(y) = 0\, , \, \forall y\in \mathbb{R}^d\backslash{\cal G}\Big\}\,.
 \end{equation}
\end{corollary}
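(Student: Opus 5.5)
The plan is to deduce Corollary \ref{C:1} directly from the finite-sample bound \eqref{eq:finitesamplebound} of Theorem \ref{T:1}, by plugging in the stated choices of $h$ and $a_m$ and checking each of the four terms.

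\emph{Pointwise rate.} Put $N=\min\{m,n\}$ and $h\asymp N^{-1/(2\beta+d)}$. Since $h\to 0$, eventually $h<\min(\rho_1,\rho_5)$. Since $a_m\downarrow 0$, eventually $a_m\le\underline\lambda/2$, where $\underline\lambda$ is uniform over $\mathcal F(x)$ by Lemma \ref{lem:lowerboundeigen}. Hence Theorem \ref{T:1} applies for all large $N$, with constants $C,c$ uniform over $\mathcal F(x)$. The terms are handled as follows.
\begin{itemize}
\item The variance terms satisfy $1/(nh^d)+1/(mh^d)\le 2/(Nh^d)\asymp N^{-1+d/(2\beta+d)}=N^{-2\beta/(2\beta+d)}$.
\item The bias term satisfies $h^{2\beta}\asymp N^{-2\beta/(2\beta+d)}$.
\item For the trimming term, note that $1+1/(nh^d)\le 2$ for large $N$, because $nh^d\ge Nh^d\to\infty$.
\end{itemize}
So it remains to show that $a_m^{-2}\exp(-c\,mh^d)=o(N^{-2\beta/(2\beta+d)})$.

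Write the trimming term as $\exp\bigl(2\log(1/a_m)-c\,mh^d\bigr)$. Since $m\ge N$, we have $mh^d\gtrsim m\,N^{-d/(2\beta+d)}\ge m^{2\beta/(2\beta+d)}$. This lower bound is the key point, and it is exactly where the hypothesis $\log(1/a_m)=o(m^{2\beta/(2\beta+d)})$ enters. It gives $2\log(1/a_m)\le \tfrac c2\, mh^d$ eventually, so the term is at most $\exp(-\tfrac c2\, mh^d)\le\exp(-c'\,N^{2\beta/(2\beta+d)})$. This decays faster than any power of $N$.

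For the example $a_m=m^{-\gamma}$ we have $\log(1/a_m)=\gamma\log m=o(m^{2\beta/(2\beta+d)})$, so the hypothesis holds. The main (mild) obstacle is this comparison of exponents when $n\ll m$. Using $h^d\asymp N^{-d/(2\beta+d)}\ge m^{-d/(2\beta+d)}$ handles it, and it holds uniformly because $c$ does not depend on $(f,g)$.

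\emph{Integrated rate.} For $(f,g)\in\mathcal F(\mathcal G)$ we have $(f,g)\in\mathcal F(x)$ for every $x\in\mathcal G$, and the constants $C,c$ and $\underline\lambda$ in Theorem \ref{T:1} and Lemma \ref{lem:lowerboundeigen} depend only on $\rho_2,\dots,\rho_6,l,d,\beta,K$, not on $x$. Hence the pointwise bound holds uniformly in $x\in\mathcal G$ and in $(f,g)\in\mathcal F(\mathcal G)$. The map $(x,\text{data})\mapsto|\hat r^*(x)-r(x)|^2$ is jointly measurable: $\hat S$ and $\hat V$ are continuous in $x$, $\lambda_{\min}$ is continuous, and $r$ is continuous. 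Tonelli's theorem therefore gives
\[
\E_{f,g}\Big[\int_{\mathcal G}|\hat r^*(x)-r(x)|^2\,\dd x\Big]=\int_{\mathcal G}\E_{f,g}|\hat r^*(x)-r(x)|^2\,\dd x\le\lambda^d(\mathcal G)\,\sup_{x\in\mathcal G}\sup_{(f,g)}\E_{f,g}|\hat r^*(x)-r(x)|^2 .
\]
Since $\mathcal G$ is compact, $\lambda^d(\mathcal G)<\infty$, and the pointwise rate yields $\mathcal O(N^{-2\beta/(2\beta+d)})$.
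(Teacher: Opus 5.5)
Your proposal is correct and follows essentially the same route as the paper. Both insert the bandwidth into the bound of Theorem~\ref{T:1}, use $m h^d \gtrsim m^{2\beta/(2\beta+d)}$ together with $\log(1/a_m)=o(m^{2\beta/(2\beta+d)})$ to make the trimming term negligible, and obtain the integrated statement by Fubini/Tonelli with constants that do not depend on $x\in\mathcal G$. Your explicit checks that $a_m\le\underline{\lambda}/2$ and $h<\min(\rho_1,\rho_5)$ hold eventually and uniformly, and of joint measurability, are details the paper leaves implicit.
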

For $x\in{\cal G}$, from \eqref{eq:Boundary} it follows that $\Pr(Y_1 \in B_\varepsilon(x)) \geq \rho_3\, \rho_4 \, \varepsilon^d$, $\varepsilon \in (0,\rho_5)$, hence ${\cal G}$ must be contained in the support of $\Pr_{Y_1}$.
%The condition $g=0$ in ${\cal F}({\cal G})$ ensures that the support of $g$ is contained in ${\cal G}$.  %\hajo{muss dann ${\cal G}$ nicht automatisch der support von $g$ sein?}
%\hajo{comment on g=0 outside of ${\cal G}$}
%
%\begin{remark}[Optimality]
 %   In the case of $m\asymp n$ we obtain the classical convergence rates in nonparametric curve estimation, which are known to be minimax-optimal in standard density estimation. This optimality easily extends to the current setting by considering an inner point $x$ and assuming $g$ to be known and constant (equal to $1$) on ${\cal U}$ so that $r=f$. Note that the data $X_1,\ldots,X_n$ form a sufficient statistic for $r$ under these constraints. 
    
 %   On the other hand, note that, under these constraints, the densities $f$ and $g$ themselves cannot be estimated at $x$ with any convergence rate uniformly over $(f,g)\in {\cal F}(x)$. This can be seen as follows: just put $r\equiv 1$ (thus, perfectly smooth) so that $f=g$ and $X_1,\ldots,X_n,Y_1,\ldots,Y_m$ forms an i.i.d.~sample with the density $f=g$, on which no smoothness conditions are imposed. 
%In contrast to many other approaches to density ratio estimation such as \citet{kpotufe2017lipschitz} or \citet{XYH25} for unbounded density ratios, our procedure attains the optimal convergence rates without any logarithmic deterioration. 
%
%\end{remark}
%
%
\begin{remark}[Optimality]\label{rem:optimality}
The rate of Corollary~\ref{C:1} for $r(x)$ is minimax-optimal over ${\cal F}(x)$, for
every combination of $m$ and $n$. 
Consider an inner point $x$. 

Let, first, $g$ be fixed, constant equal to $1$, so that $r=f$. Hence the classical result for
density estimation over H\"older classes gives the lower bound of order
$n^{-2\beta/(2\beta+d)}$. Note that the data $X_1,\ldots,X_n$ form a sufficient statistic for $r$ under these constraints, since the law of $Y_1,\ldots,Y_m$ does not depend on $r$. 

Second, let $f$ be fixed, constant equal to $1$ so that  $g = 1/r$. If the range of $r$  is contained in the compact interval $[1/\rho_2, 1/\rho_3]$, where we may take $\rho_3 \leq 1/2$ and $\rho_2 > 2$, then $g$ has values in $[\rho_3, \rho_2]$. The Hölder class for $r$ at $x$ results in a corresponding Hölder class for $g$ at $x$, and the mean squared errors for $r(x)$ and for $g(x)$ of estimators contained in the appropriate range are of the same order. Hence the known lower bound of order $m^{-2\beta/(2\beta+d)}$ for estimating $g(x)$ transfers to $r(x)$, giving the overall lower bound of order  $\max\big(n^{-2\beta/(2\beta+d)},m^{-2\beta/(2\beta+d)}\big) = \min(m,n)^{-2\beta/(2\beta+d)}$ for the minimax risk. 

In contrast to many other approaches to
density ratio estimation such as \citet{kpotufe2017lipschitz} or
\citet{XYH25}, our procedure attains this rate
without any logarithmic deterioration.

\end{remark}

\begin{remark}[Non-estimability of individual densities over $\cF(x)$]\label{rem:nonestimable}
Suppose that $\rho_6 \geq 1$, so that pairs $(g,g)$ which have $r=1$ can be contained in $ \cF(x)$. Consider the subclass
$$\cF_c(x) \, = \, \big\{(f,g) \in \cF(x) \mid f \mbox{ and } g \mbox{ are continuous on } B_{\rho_1}(x)\big\}\,$$
of ${\cal F}(x)$, on which the values $f(x)$ and $g(x)$ are uniquely defined. Further suppose that the parameters $\rho_j$ are such that there exists a $g_0$ with $(g_0,g_0) \in {\cal F}_c(x)$ for which $\rho_3 < g_0 < \rho_2$ on $B_{\rho_1}(x)$.

In this case we show that $g(x)$ cannot be estimated consistently over this subclass: There exists $a>0$ such that 
for all $n, m$, we have that
\begin{equation}\label{eq:nonestimable}
\inf_{\hat g} \, \sup_{(f,g) \in {\cal F}_c(x)} \, \E_{f,g}\big[ |\hat g(x) - g(x)|^2\big] \, \geq \,a, 
\end{equation}
% a = 4\, \sqrt a
where the infimum is over all estimators $\hat g$ based on the samples $X_1, \ldots, X_n$ and $ Y_1, \ldots, Y_m$. A similar statement holds true for $f(x)$.

To show \eqref{eq:nonestimable}, by continuity there exist $b >0$ such that $\rho_3 + b < g_0 < \rho_2 - b$ on $B_{\rho_1}(x)$. Choose a function $\phi: \R^d \to [0,1]$, continuous with support in $B_1(0)$ and satisfying $\phi(0) = 1$. Fix $v\in\R^d$ with $\|v\|_2 = 1$, and for $\delta \in (0, \rho_1/3)$ set 
$$ g_{1,\delta}(y) \, = \, g_0(y) + b\, \phi\big((y-x)/\delta\big) - b\, \phi\big((y-x-2\delta \, v)/\delta\big)\,. $$
Then $g_{1,\delta}$ is a continuous probability density which coincides with $g_0$ outside $B_{3\, \delta}(x)$ and for which also  $(g_{1,\delta}, g_{1,\delta}) \in \cF(x)$, as well as $g_{1,\delta}(x) - g_0(x) = b$ for all $\delta\in (0, \rho_1/3)$. 
Further the $\chi^2$-divergence is upper bounded by
$$\chi^2(g_{1,\delta} \,\|\, g_0) \, = \, \int \frac{(g_{1,\delta}-g_0)^2}{g_0} \, \leq \, \frac{2\,b^2\, \delta^d}{\rho_3} \int \phi^2 \,.$$
 Under $(g,g)$ the pooled sample $X_1, \ldots, X_n,  Y_1, \ldots, Y_m$ is i.i.d.~with density $g$. Choosing $\delta$ so small that $\chi^2(g_{1, \delta}\|g_0)) \leq 1/(n+m)$ then concludes the proof of \eqref{eq:nonestimable} by applying standard decision theory \citep[Chapter~2]{tsybakov2009introduction}.

\end{remark}

\begin{remark}[Rate for derivative estimation]
For the estimator $\hat r_{{\bf k}}(x)$ of the ${\bf k}$-th partial derivative in \eqref{eq:derest}, for $|{\bf k}| \geq 1$ we use the 
trimming 
\begin{equation}\label{eq:esttrimmedder}
 \hat r_{{\bf k}}^*(x) = \hat r_{{\bf k}}(x) \, 1\big( \lambda_{\min}(\hat S) > a_m\big).  
 \end{equation}
 Then is $|{\bf k}| \leq \lceil\beta\rceil-1$,  $l \geq \lceil\beta\rceil-1$, $h \asymp \min\{m,n\}^{-1/(2\beta+d)}$, and the trimming sequence $a_m \downarrow 0$ is chosen as in Corollary \ref{C:1}, as $\min\{m,n\} \to \infty$ we have that 
 \begin{equation}\label{eq:dervest}
 \sup_{(f,g) \in{\cal F}(x)} \, \mathbb{E}_{f,g} \big[\big| \hat r_{{\bf k}}^*(x) - r^{({\bf k})}(x)\big|^2\big] \, = \, {\cal O}\big(\min\{m,n\}^{-2\, (\beta - |k|)/(2\beta+d)}\big)\,.
 \end{equation}
 The proof is outlined in Section \ref{sec:proofremaingrates}. 
\end{remark}
Next we turn to a concentration inequality for the estimator, which is useful for applications in classification \citep{audibert2007fast}. 
\begin{theorem}[Concentration inequality]\label{th:concinequ}
    Under Assumption \ref{ass:boundedness} for $h \in (0, \min(\rho_1,\rho_5))$ we have for $\delta>0$ that 
    \begin{align}
     \Pr\Big(\big|\hat r(x) - \bar r(x)\big| \geq \delta \Big)  \, \leq \, & 2 \# {\cal K}_l\, \exp\big(-c_1\, n\, h^d\, \min(\delta,\delta^2)\big)\nonumber \\
      + & 2 \,(\# {\cal K}_l)^2\, \exp\big(-c_2\, m\, h^d\, \min(\delta,\delta^2)\big) + \# {\cal K}_l\, \exp(-c_3\, m\, h^d),\label{eq:expbound}
    \end{align} 
    where $\bar r(x)$ is given in \eqref{eq:barrx}, and the constants $c_i$ depend only on $\rho_2,\rho_3,\rho_4$, $d$, $l$ and $K$.  
\end{theorem}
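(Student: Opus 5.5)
We write $\hat r(x)-\bar r(x)={\bf e}_0^\top\hat S^{-1}\hat V-{\bf e}_0^\top S^{-1}V$ and work on the event
\[
E:=\{\|\hat S-S\|_{\op}\le \underline{\lambda}/2\}.
\]
On $E$, Weyl's inequality gives $\lambda_{\min}(\hat S)\ge\underline{\lambda}/2$, so $\hat S$ is invertible. Using Lemma \ref{lem:lowerboundeigen}, we then decompose
\[
\hat r(x)-\bar r(x)={\bf e}_0^\top\hat S^{-1}(\hat V-V)+{\bf e}_0^\top\hat S^{-1}(S-\hat S)S^{-1}V .
\]
We also bound $\|V\|$ by a constant: since $|K|\le1$ and $|z^{\bf k}|\le1$ on $B$, each entry satisfies $|V_{\bf k}|\le \rho_2\lambda^d(B)$. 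This uses only $h<\rho_1$, so that the ball $B_h(x)$ lies in ${\cal U}$ and $f\le\rho_2$ there. Hence $\|S^{-1}V\|\le C_0:=\underline{\lambda}^{-1}\rho_2\lambda^d(B)(\#{\cal K}_l)^{1/2}$. On $E$ this gives
\[
|\hat r(x)-\bar r(x)|\le \tfrac{2}{\underline{\lambda}}\Big(\|\hat V-V\|+C_0\|\hat S-S\|_{\op}\Big).
\]
Consequently $\{|\hat r-\bar r|\ge\delta\}\cap E$ is contained in the union of the events $\{\|\hat V-V\|\ge \underline{\lambda}\delta/4\}$ and $\{\|\hat S-S\|_{\op}\ge \underline{\lambda}\delta/(4C_0)\}$.

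Each entry is an average of i.i.d. bounded variables, and we apply Bernstein's inequality to it. For $\hat V_{\bf k}$ the summands $K_h(x-X_j)((x-X_j)/h)^{\bf k}$ are bounded by $h^{-d}$. Their variance is at most
\[
h^{-2d}\int_{B_h(x)}f\le \rho_2\lambda^d(B)\,h^{-d}.
\]
Bernstein therefore gives
\[
\Pr(|\hat V_{\bf k}-V_{\bf k}|\ge t)\le 2\exp\big(-c\,n h^d\min(t,t^2)\big),
\]
the $\min(t,t^2)$ coming from the two regimes of the Bernstein exponent $t^2/(2\sigma^2+2bt/3)$ with $\sigma^2\asymp h^{-d}$ and $b=h^{-d}$. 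The same bound holds for $\hat S_{{\bf k},{\bf k}'}$ with $m$ in place of $n$, using $g\le\rho_2$. We bound the Euclidean norm of $\hat V-V$ by $(\#{\cal K}_l)^{1/2}$ times its maximal entry, and $\|\hat S-S\|_{\op}$ by the Frobenius norm, hence by $\#{\cal K}_l$ times its maximal entry. A union bound over the $\#{\cal K}_l$ entries of $\hat V$ and the $(\#{\cal K}_l)^2$ entries of $\hat S$ then gives the first two terms of \eqref{eq:expbound}, after absorbing the constants $\underline{\lambda}$, $C_0$ and $\#{\cal K}_l$ into $c_1,c_2$. Rescaling $\delta$ by a constant changes $\min(\delta,\delta^2)$ only by a constant factor.

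It remains to bound $\Pr(E^c)$. Entrywise Bernstein at the fixed level $t=\underline{\lambda}/(2\#{\cal K}_l)$ gives a bound of the form $(\#{\cal K}_l)^2\exp(-c_3 m h^d)$. The stated constant in front is $\#{\cal K}_l$ rather than $(\#{\cal K}_l)^2$. To match it we would instead use a matrix Bernstein inequality for the sum of the symmetric random matrices $K_h(x-Y_j)Q((x-Y_j)/h)Q((x-Y_j)/h)^\top$. These have operator norm at most $\#{\cal K}_l\,h^{-d}$, and their matrix variance has norm $\lesssim h^{-d}$. Matrix Bernstein yields the dimensional prefactor $\#{\cal K}_l$ directly, up to the factor 2, which can be absorbed by adjusting $c_3$ or by using the one-sided lower-tail matrix Chernoff bound for $\lambda_{\min}$.

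The main obstacle is bookkeeping rather than ideas. The constants must depend only on $\rho_2,\rho_3,\rho_4,d,l,K$: this holds because $\underline{\lambda}$ depends only on $\rho_3,\rho_4,d,l$ by Lemma \ref{lem:lowerboundeigen}, and $C_0$ involves only $\rho_2$ and $\underline{\lambda}$. We must also check that the case where $\hat S$ is singular (so $\hat r=1$) lies in $E^c$, which it does.
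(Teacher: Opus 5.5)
Your proposal is correct and follows essentially the paper's route. It uses the same two-term splitting of $\hat r(x)-\bar r(x)$ (up to the order of factors in the second term) on a good event where $\|\hat S^{-1}\|_{\op}\le 2/\underline{\lambda}$, entrywise Bernstein with union bounds for $\|\hat V-V\|_2$ and $\|\hat S-S\|_{\op}$ (the paper's Lemma \ref{lem:boundsterms2}), and a separate bound for the event that the smallest eigenvalue of $\hat S$ is small.

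The one point to tighten is the last term. A prefactor $2$ cannot in general be absorbed into $c_3$ uniformly over small $mh^d$; for instance it fails when $l=0$. So drop $E$ and work directly on $\{\lambda_{\min}(\hat S)>\underline{\lambda}/2\}$, which is all your argument on $E$ uses. Then bound the complement by the lower-tail matrix Chernoff bound, exactly as the paper does via Lemma \ref{lem:tsybakov2}.
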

%
%The proof is also provided in Section \ref{sec:proofsthrates}. 
%
The same result holds for the trimmed estimator $\hat r^*(x)$ in \eqref{eq:esttrimmed} in case $a_m \leq \underline{\lambda}$.
 
%\begin{remark} \label{Rem:concen}
% Also holds for trimmed estimator, but here trimming not necessary.    
%\end{remark}
%
\begin{theorem}[Rate in the uniform norm]\label{T:unifrate}
    Let ${\cal G}$ be a compact subset of $\mathbb{R}^d$. Suppose that $l \geq \lceil\beta\rceil-1$, that additionally to compact support on $B_1(0)$ and boundedness between $0$ and $1$ the kernel $K$ is Lipschitz continuous,  
    that $h \asymp \big(\log(\min\{m,n\})/\min\{m,n\}\big)^{1/(2\beta+d)}$, and that the trimming sequence $a_m \downarrow 0$ satisfies $\log(1/a_m) = o\big(m^{2 \beta/ ( 2 \beta + d)} \big)$. Then as $\min\{m,n\} \to \infty$, 
$$ \sup_{(f,g) \in{\cal F}({\cal G})} \, \mathbb{E}_{f,g} \big[\sup_{x \in {\cal G}}\big|\hat{r}^*(x) - r(x)\big|^2\big] \, = \, {\cal O}\Big( \,\Big(\frac{\log(\min\{m,n\})}{\min\{m,n\}}\Big)^{2\beta/(2\beta+d)}\,\Big)\,,$$
with ${\cal F}\big({\cal G}\big)$ defined in \eqref{eq:funcclassG}. 
\end{theorem}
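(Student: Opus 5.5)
We split the error uniformly as $\sup_{x}|\hat r^*(x)-r(x)| \le \sup_x|\bar r(x)-r(x)| + \sup_x|\hat r^*(x)-\bar r(x)|$.

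\textbf{Bias term.} Proposition \ref{prop:bias} applies at every $x\in\cG$ with constants uniform over $\cF(\cG)$. Hence this term is $O(h^\beta)$ deterministically. We work on the event $E=\{\inf_{x\in\cG}\lambda_{\min}(\hat S(x))>\underline{\lambda}/2\}$, where no trimming occurs and $\hat r^*=\hat r$. On the complement we use the crude bound $|\hat r^*(x)|\le \|\hat S^{-1}\|_{\op}\|\hat V\|\le a_m^{-1}C h^{-d}$, where $\hat V$ is bounded by $Ch^{-d}$ since $K\le 1$. It therefore suffices to show that $\Pr(E^c)$ decays like $\exp(-c\,m h^d)$ up to polynomial factors, which is negligible because $\log(1/a_m)=o(m^{2\beta/(2\beta+d)})$ and $mh^d$ grows polynomially.

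\textbf{Discretisation.} We cover $\cG$ by a grid $\cG_N$ of $N\asymp \min\{m,n\}^{A}$ points with mesh $\eta=h\,\min\{m,n\}^{-2}$, say. Since $K$ is Lipschitz and supported on $B_1(0)$, each entry $x\mapsto \hat S_{\bf k,k'}(x)$ and $x\mapsto \hat V_{\bf k}(x)$ is Lipschitz with constant $C h^{-d-1}$, and the same holds for $S$ and $V$. Thus the oscillation over a mesh cell is $O(h^{-d-1}\eta)=o(h^\beta)$ deterministically. On $E$ and on the corresponding event at the grid points, matrix inversion is Lipschitz with bounded constant, so $\hat r$ and $\bar r$ also move by $o(h^\beta)$ between neighbouring grid points.

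\textbf{Grid points.} At each grid point we invoke Theorem \ref{th:concinequ} (and the eigenvalue bound in its proof, which gives $\Pr(\lambda_{\min}(\hat S(x))\le\underline{\lambda}/2)\le \#\cK_l e^{-c_3 mh^d}$). A union bound over the $N$ grid points gives
$$\Pr\big(\max_{x\in\cG_N}|\hat r(x)-\bar r(x)|\ge\delta\big)\le C N\big(e^{-c\,\min\{m,n\}h^d\delta^2}+e^{-c_3 mh^d}\big)$$
for $\delta\le1$. We take $\delta_t=t\,(\log \min\{m,n\}/(\min\{m,n\}h^d))^{1/2}\asymp t\,h^\beta$ by the choice of $h$. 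Then $\min\{m,n\}h^d\delta_t^2 = t^2\log\min\{m,n\}$, so for $t$ large the polynomial factor $N$ is absorbed. Integrating the tail, $\E[\max_{\cG_N}|\hat r-\bar r|^2\,1_E]\lesssim h^{2\beta}$. For large deviations $\delta>1$ the bound is $\exp(-c\,nh^d\delta)$, which is also integrable. The control of $E^c$ comes from the same union bound plus the Lipschitz extension of $\lambda_{\min}(\hat S)$ between grid points.

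\textbf{Main obstacle.} The main obstacle is the bookkeeping on $E^c$ together with the random Lipschitz control of $\hat S^{-1}$ between grid points. We handle this by first bounding $\lambda_{\min}(\hat S)$ on the grid and then extending it deterministically via the $h^{-d-1}\eta$ oscillation bound (Weyl's inequality). Collecting the three pieces gives the bound $h^{2\beta}\asymp(\log\min\{m,n\}/\min\{m,n\})^{2\beta/(2\beta+d)}$, uniformly over $\cF(\cG)$.
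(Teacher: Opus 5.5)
Your proof is correct, but it discretizes at a different level than the paper.

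The paper first proves Lemma~\ref{lem:uniformbounds}. Since $x\mapsto\hat V(x),V(x),\hat S(x),S(x)$ are Lipschitz with \emph{deterministic} constants of order $h^{-(d+1)}$, the Bernstein bounds of Lemma~\ref{lem:boundsterms2}, a union bound over a polynomial-size internal cover and tail integration give $\E[\sup_{\cG}\|\hat V-V\|_2^2]\lesssim \log n/(nh^d)$ and $\E[\sup_{\cG}\|\hat S-S\|_F^2]\lesssim \log m/(mh^d)$. These are inserted into the supremum version of the deterministic linearization \eqref{eq:boundingterms} from the proof of Theorem~\ref{T:1}. The event $\{\inf_{\cG}\lambda_{\min}(\hat S)\le\underline{\lambda}/2\}$ is handled by \eqref{eq:unifempeigbound}, via Weyl's inequality on a grid; this is exactly your treatment of $E^c$.

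You instead apply the nonlinear pointwise inequality of Theorem~\ref{th:concinequ} at the grid points and discretize $\hat r-\bar r$ itself. The paper's route never has to control the random Lipschitz behaviour of $\hat S^{-1}$, and it yields reusable uniform moment bounds for the linear statistics. Yours uses Theorem~\ref{th:concinequ} as a black box and gives, as a by-product, an exponential tail bound for $\sup_{\cG}|\hat r-\bar r|$ on $E$.

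A few details in your version should be made explicit:
\begin{enumerate}
\item The grid must lie inside $\cG$, so that Theorem~\ref{th:concinequ} and the event $E$ apply at its points.
\item On $E$ the Lipschitz constant of $x\mapsto\hat r(x)$ is $O(h^{-2d-1})$ rather than $O(h^{-d-1})$, because $\|\hat V\|_2$ is only deterministically bounded by $Ch^{-d}$. Your mesh $h\min\{m,n\}^{-2}$ still works: since $(2d+\beta)/(2\beta+d)<2$, one has $h^{-2d}\min\{m,n\}^{-2}=o(h^\beta)$.
\item The term $\#\cK_l\exp(-c_3mh^d)$ in \eqref{eq:expbound} does not decay in $\delta$. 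The tail integral therefore has to be truncated at the bound $|\hat r-\bar r|\le Ch^{-d}$, which holds on $E$.
\item Extending $\lambda_{\min}(\hat S)>\underline{\lambda}/2$ from the grid to all of $\cG$ needs a margin at the grid points, e.g.\ $3\underline{\lambda}/4$, as in the paper's Weyl step.
\end{enumerate}
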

The uniform rate includes boundary points of the support of $g$. The proofs of these two theorems are deferred to Section \ref{sec:proofremaingrates}.
%\hajo{discuss: uniformity covers the boundary of the support of $g$}

\section{Asymptotic normality}\label{sec:asympnorm}
Next we turn to the asymptotic distribution of the estimator. We obtain asymptotic normality at individual points $x$ including boundary points of the support of $g$. Further, we propose an appropriate studentization, and under additional regularity assumptions on the individual densities $f$ and $g$ at $x$ we obtain explicit expressions for the asymptotic variance. Finally, asymptotic normality of the derivative estimators in \eqref{eq:derest} is also obtained. 

In addition to Assumption \ref{ass:boundedness} and continuity of $r$ on ${\cal U}$ we assume that 
\begin{equation}\label{eq:rpositive}
r(z) >0, \qquad z \in {\cal U}. 
\end{equation}
Together with $f = r\, g$ $\lambda^d$-a.e.~on ${\cal U}$ this implies that condition \eqref{eq:Boundary} also holds for $f$, 
\begin{equation} \label{eq:Boundary1} 
\lambda_d\big(\big\{z\in \mathbb{R}^d : \|z-x\| < \varepsilon \, , \, f(z)\geq \rho_3\big\}\big) \, \geq \, \rho_4 \cdot \varepsilon^d \, , \quad \forall \varepsilon \in (0,\rho_5)\,, \end{equation}
where we assume that the constants $\rho_3,\rho_4, \rho_5 \in (0,\infty)$ have been chosen to suit both $g$ and $f$.   

Set 
$$Q(z)=(z^{\bf k})_{{\bf k}\in {\cal K}_l}$$ and let
\begin{align}\label{eq:defpopquantasympnormcov}
\begin{split}    
 M & \, = \, M(x;h) \, = \, \int f(x-hz) \, K^2(z) \, Q(z)\,Q(z)^\top \dd z\,,\\
  N & \, = \, N(x;h) \, = \, \int g(x-h\,y) \, K^2(y) \, |\bar \theta^\top \, Q(y)|^2\,Q(y)\, Q(y)^\top \dd y\,,   
  \end{split}
\end{align}
with $\bar \theta = S^{-1}\, V$, see Remark \ref{rem:localLS}, as well as 
\begin{equation}\label{eq:covmatrun}
        \Sigma_{m,n} \,=\, \frac{\min(m,n)}n \,(M - h^d\, V\, V^\top) \, + \, \frac{\min(m,n)}m \, (N - \, h^d\, V\, V^\top). 
\end{equation}
and
        \begin{align}\label{eq:varaysmp}
        \sigma_{m,n}^2 & \,=\, {\bf e}_0^\top \, S^{-1}\, \Sigma_{m,n} \, S^{-1}\, {\bf e}_0\\
        & \,=\, \frac{\min(m,n)}n \,\big({\bf e}_0^\top \, S^{-1}\, M \, S^{-1}\, {\bf e}_0- h^d\, \bar r(x)^2\big) \, + \, \frac{\min(m,n)}m \,\big({\bf e}_0^\top \, S^{-1}\, N \, S^{-1}\, {\bf e}_0- h^d\, \bar r(x)^2\big).\nonumber
        \end{align}
Further introduce
\begin{align}\label{eq:quantvarest}
       \begin{split}           
           A_i & \, = \, K_h(x-X_i)\, {\bf e}_0^\top \,\hat S^{-1} \,  Q\Big(\frac{x-X_i}{h}\Big)\, ,\\
           B_j & \, = \, K_h(x-Y_j)\, {\bf e}_0^\top \, \hat S^{-1} \,  Q\Big(\frac{x-Y_j}{h}\Big) \, \hat \theta^\top\,  Q\Big(\frac{x-Y_j}{h}\Big)\, 
       \end{split}    
       \end{align}
and 
\begin{equation}\label{eq:varest}
    \hat \sigma_{m,n}^2 = \min(m,n)\, h^d\,\Big(\frac1{n^2}\, \sum_{i=1}^n \big(A_i - \hat r(x) \big)^2 \, + \, \frac1{m^2}\, \sum_{j=1}^m \big(B_j - \hat r(x) \big)^2 \Big).
\end{equation}
The proof of the following theorem is provided in Section \ref{sec:asympnormproofs}.
\begin{theorem}\label{th:CLTgen}
    Impose Assumption \ref{ass:boundedness} and additionally assume  \eqref{eq:rpositive}. Then as $h \to 0$ and $\min(m,n)\, h^d \to \infty$ we have that
    \begin{equation}\label{eq:CLTstand}
        \big(\min(m,n)\, h^d \big)^{1/2} \, \frac{\hat r(x) - \bar r(x)}{\sigma_{m,n}}\, \stackrel{{\cal L}}{\longrightarrow}\, {\cal N}(0, 1),
    \end{equation}
    with $\sigma_{m,n}^2$ defined in \eqref{eq:varaysmp}, which is bounded away from $0$. Here, $\stackrel{{\cal L}}{\longrightarrow}\, {\cal N}(0, 1)$ denotes weak convergence to a standard normal distribution. 
      
    Further,  \eqref{eq:CLTstand} remains true if we replace $\sigma_{m,n}$ by $\hat \sigma_{m,n}$, with $\hat \sigma_{m,n}^2$ defined in \eqref{eq:varest}.
\end{theorem}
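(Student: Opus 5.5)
Our plan is to linearize $\hat r(x) - \bar r(x)$ around the population quantities and then apply a Lindeberg--Feller CLT to a sum of two independent triangular arrays. With $\bar\theta = S^{-1}V$ we write
\[
\hat r(x) - \bar r(x) \,=\, {\bf e}_0^\top \hat S^{-1}\big(\hat V - \hat S\,\bar\theta\big)
\,=\, {\bf e}_0^\top S^{-1}\big(\hat V - V\big) - {\bf e}_0^\top S^{-1}\big(\hat S - S\big)\bar\theta + R,
\]
where $R = {\bf e}_0^\top(\hat S^{-1}-S^{-1})\big((\hat V - V) - (\hat S - S)\bar\theta\big)$. This uses $V - S\bar\theta = 0$. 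The leading term is
\[
\frac1n\sum_{i=1}^n \Big(\xi_i - \E\xi_i\Big) \, - \, \frac1m\sum_{j=1}^m \Big(\eta_j - \E\eta_j\Big),
\]
with $\xi_i = K_h(x-X_i)\,{\bf e}_0^\top S^{-1}Q\big(\tfrac{x-X_i}h\big)$ and $\eta_j = K_h(x-Y_j)\,{\bf e}_0^\top S^{-1}Q\big(\tfrac{x-Y_j}h\big)\,\bar\theta^\top Q\big(\tfrac{x-Y_j}h\big)$. The two sums are independent. A change of variables $z = (x-X_i)/h$ gives $h^d\var(\xi_i) = {\bf e}_0^\top S^{-1}(M - h^dVV^\top)S^{-1}{\bf e}_0$, and similarly for $\eta_j$ with $N$ in place of $M$; here $\E\eta_j = {\bf e}_0^\top S^{-1}S\bar\theta = \bar r(x)$. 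Hence $\min(m,n)h^d\,\var(\text{leading term}) = \sigma_{m,n}^2$.

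First we control the size of the relevant quantities. By Assumption \ref{ass:boundedness}, $\|S\|_{\op}$ and $\|V\|$ are bounded, and Lemma \ref{lem:lowerboundeigen} gives $\|S^{-1}\|_{\op}\le \underline\lambda^{-1}$, so $\|\bar\theta\|$ is bounded. Since $K$ and $Q$ are bounded on $B$, we get $|\xi_i|,|\eta_j| \le C h^{-d}$. Lindeberg's condition then follows from Lyapunov with third moments: $\E|\xi_i|^3 \le C h^{-2d}$, so the Lyapunov ratio is $O\big((\min(m,n)h^d)^{-1/2}\big)\to 0$, provided $\sigma_{m,n}^2$ is bounded below. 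To handle $m\neq n$ we apply the CLT to the combined independent array, normalized by the total variance.

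\textbf{Main obstacle: $\sigma_{m,n}^2$ is bounded away from $0$.} The terms $h^d\bar r(x)^2$ vanish as $h\to 0$, and at least one of $\min(m,n)/n$ and $\min(m,n)/m$ equals $1$. It therefore suffices to show that both ${\bf e}_0^\top S^{-1}MS^{-1}{\bf e}_0$ and ${\bf e}_0^\top S^{-1}NS^{-1}{\bf e}_0$ are bounded below.

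For $M$ we write $w = S^{-1}{\bf e}_0 \neq 0$, with $\|w\| \ge \|S\|_{\op}^{-1}$ bounded below. Then
\[
w^\top M w = \int f(x-hz)\,K^2(z)\,(w^\top Q(z))^2\,\dd z .
\]
Condition \eqref{eq:Boundary1} says $f\ge\rho_3$ on a set of measure at least $\rho_4h^d$ in the $h$-ball. We then rerun the argument of Lemma \ref{lem:lowerboundeigen} with $K^2$ in place of $K$ to obtain a uniform lower bound $c\|w\|^2$. The idea is that polynomials with unit coefficient norm cannot be small on a set of proportional measure, which we make quantitative by compactness.

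For $N$ the integrand contains $(w^\top Q(y))^2(\bar\theta^\top Q(y))^2$. Here we use that $\bar\theta^\top Q((x-\cdot)/h)$ approximates $r$, which is continuous and positive. More precisely, we bound $\int g K^2 (w^\top Q)^2(\bar\theta^\top Q)^2 \ge c$ by restricting to the set where $g\ge\rho_3$ and $\bar\theta^\top Q \ge r(x)/2$. To show this set still has proportional measure, we use the local least-squares characterization: $\mathcal M(\bar\theta)\le\mathcal M(r(x){\bf e}_0)=o(h^{-0})$ in the normalized sense, by continuity of $r$. Thus $\bar\theta^\top Q$ is close to $r$ in weighted $L_2$, and on the polynomial space, norm equivalence converts this into closeness of coefficients. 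If this route gets messy, an alternative is to argue by contradiction with a compactness argument over sequences $h_k\to0$ for the pair $(w,\bar\theta)$.

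\textbf{Remainder and studentization.} We show $R = o_P\big((\min(m,n)h^d)^{-1/2}\big)$. Matrix Bernstein gives $\|\hat S - S\|_{\op} = O_P\big((mh^d)^{-1/2}\big)$, and likewise for $\hat V - V$. The event $\lambda_{\min}(\hat S)\ge\underline\lambda/2$ has probability tending to one, as in Theorem \ref{th:concinequ}. On it, $\|\hat S^{-1}-S^{-1}\|_{\op}\le C\|\hat S-S\|_{\op}$, so $R = O_P\big((\min(m,n)h^d)^{-1}\big)$. Slutsky's lemma then yields \eqref{eq:CLTstand}.

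For $\hat\sigma_{m,n}$ we show $\hat\sigma_{m,n}^2-\sigma_{m,n}^2\to0$ in probability. First we replace $\hat S^{-1}$ and $\hat\theta$ by $S^{-1}$ and $\bar\theta$ in $A_i$ and $B_j$, at a cost of $O_P\big((mh^d)^{-1/2}\big)$ times moments like $h^{d}\,\frac1n\sum_i\big(K_h(x-X_i)\|Q\|\big)^2 = O_P(1)$. The remaining sums $\frac{h^d}{n}\sum_i(\xi_i-\hat r)^2$ are handled by an LLN via variance computations: the variance is $O(1/(nh^d))$. This gives consistency, and Slutsky's lemma finishes the proof.
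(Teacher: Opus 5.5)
Your proof is correct, and its overall structure matches the paper's. Your linearization is the paper's expansion \eqref{eq:expandasymp}, since $S^{-1}V=\bar\theta$. You then use a Lyapunov CLT for the pooled independent array, control $R$ via Lemmas \ref{lem:tsybakov2} and \ref{lem:boundsterms}, and finish with Slutsky.

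The real difference is in the step you single out: the lower bound for the $N$-part of $\sigma_{m,n}^2$. The paper writes $\alpha^\top N\alpha$ as a $K^2$-weighted integral of the squared degree-$2l$ polynomial $\bar\theta^\top Q\cdot\alpha^\top Q$. It applies Lemma \ref{lem:Slowereigagain} with degree $2l$, uses compactness of the bilinear coefficient map to get $\|\Phi(\theta,\alpha)\|\ge\kappa\|\theta\|\,\|\alpha\|$, and bounds $\|\bar\theta\|\ge V_0/\|S\|_F$ from below via \eqref{eq:Boundary1}.

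You instead show $\bar\theta\to r(x){\bf e}_0$, and your least-squares sketch closes cleanly. Indeed $\underline\lambda\,\|\bar\theta-r(x){\bf e}_0\|^2\le(\bar\theta-r(x){\bf e}_0)^\top S\,(\bar\theta-r(x){\bf e}_0)\le 2{\cal M}(\bar\theta)+2{\cal M}(r(x){\bf e}_0)\le 4\,{\cal M}(r(x){\bf e}_0)$. Moreover ${\cal M}(r(x){\bf e}_0)\le\rho_2\lambda^d(B)\sup_{\|z-x\|\le h}|r(z)-r(x)|^2\to0$. Hence $\bar\theta^\top Q\ge r(x)/2$ on all of $B$ once $h$ is small, so no measure estimate for an intersection of sets is needed. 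Everything then reduces to the degree-$l$, $K^2$-version of Lemma \ref{lem:Slowereigagain} for $g$, and this even gives a $\lambda_{\min}$ bound for $N$, not just for the quadratic form at $S^{-1}{\bf e}_0$.

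Your route is more elementary and identifies the limit of $\bar\theta$, consistent with Remark \ref{rem:asympvarlimitform}. However, it only applies once $h$ is below a threshold governed by $r(x)$ and the modulus of continuity of $r$ at $x$. The paper's bound holds for every $h\in(0,\rho_5)$, with constants that involve $r$ only through \eqref{eq:Boundary1}. For the pointwise limit theorem, either suffices.

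Your treatment of $\hat\sigma_{m,n}$ plugs $S^{-1}$ and $\bar\theta$ into $A_i$ and $B_j$ and then uses an LLN. This is equivalent to the paper's identity $\hat\sigma_{m,n}^2={\bf e}_0^\top\hat S^{-1}\hat\Sigma_{m,n}\hat S^{-1}{\bf e}_0$ (Lemma \ref{lem:estempposdef}) combined with Lemma \ref{lem:covmatconsist}. One small correction: replacing $\hat\theta$ by $\bar\theta$ costs ${\cal O}_{\Pr}\big((\min(m,n)h^d)^{-1/2}\big)$, not ${\cal O}_{\Pr}\big((mh^d)^{-1/2}\big)$, which is still harmless.
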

\begin{remark}[Limits of the asymptotic variance]\label{rem:asympvarlimitform}
    If the individual densities $f$ and $g$ are positive and continuous in a neighborhood of $x$, and if  
    \begin{equation} \label{eq:mton}
m/n \, \to \, \kappa\,, \qquad \mbox{ as }n\to\infty\,
\end{equation} 
for some $\kappa \in [0,\infty]$, then
\begin{equation}\label{eq:asympvarform}
     \sigma_{m,n}^2 \longrightarrow \frac{r(x)}{g(x)} \, {\bf e}_0^\top\, S_K^{-1}\, \big( \min\{\kappa,1\} M_K  +  \min\{1,1/\kappa\} \cdot r(x) \, N_K\big)\, S_K^{-1}\,{\bf e}_0\,, 
\end{equation}
where $1/\infty =: 0$, and where
    \begin{align} 
M_K  & \, = \, \int K^2(z) \, Q(z)\, Q(z)^\top  \dd z\,, \quad S_K  \, = \, \int K(z) \, Q(z)\, Q(z)^\top  \dd z\,, \quad V_K \, = \, \int K(z) \, Q(z)\,   \dd z\,,\label{eq:matrices1} \\ 
N_K  & \, = \, \int K^2(z) \,|V_K^\top \, S_K^{-1}\, Q(z)|^2\,  Q(z)\, Q(z)^\top \dd z,\nonumber
\end{align}
and $\hat \sigma_{m,n}^2$ converges in probability to the right side of \eqref{eq:asympvarform}. 
Note that continuity of $g$ at $x$ with $g(x)>0$ requires $x$ to lie in the interior of the support of $g$. For boundary points of $g$ the convergence in \eqref{eq:asympvarform} can be generalized if the boundary at $x$ is sufficiently regular, e.g.~locally approximated by a cone $C$. In this case the integrals in \eqref{eq:matrices1} have to be taken over $C$ only. 

Using asymptotic formulas such as \eqref{eq:asympvarform} directly requires estimates and hence regularity of $f$ and $g$ at $x$. The approach in  Theorem \ref{th:CLTgen} avoids this and works under minimal assumptions on $f$ and $g$. 
\end{remark}

\begin{remark}[Asymptotic normality of derivative estimates]\label{rem:derivest}
    %
 %   $$D_{\cal I}:=\operatorname{diag}\Big(\frac{{\bf k}!}{(-h)^{|{\bf k}|}}\Big){{\bf k}\in{\cal I}},\qquad
%\hat r{\cal I}(x):=\big(\hat r_{\bf k}(x)\big){{\bf k}\in{\cal I}}=D{\cal I}E_{\cal I}^\top\hat\theta,\qquad
%\bar r_{\cal I}(x):=D_{\cal I}E_{\cal I}^\top\bar\theta,$$

%$$\Gamma_{\cal I}:=E_{\cal I}^\top S^{-1}\Sigma_{m,n}S^{-1}E_{\cal I}\in\R^{q\times q},\qquad
%\hat\Gamma_{\cal I}:=E_{\cal I}^\top\hat S^{-1}\hat\Sigma_{m,n}\hat S^{-1}E_{\cal I},$$
%
Theorem \ref{th:CLTgen} extends to the estimates of the derivatives in \eqref{eq:derest}. 
Fix ${\bf k} \in {\cal K}_l$, and define $\sigma_{m,n;{\bf k}}^2$ analogously as $\sigma_{m,n}^2$ in \eqref{eq:varaysmp} by replacing ${\bf e}_0$ by ${\bf e}_{{\bf k}}$. Similarly, in the definition of $A_i$ and $B_j$ in \eqref{eq:quantvarest}, also replace ${\bf e}_0$ by ${\bf e}_{{\bf k}}$. Substituting these into \eqref{eq:varest} and also replacing $\hat r(x)$ by ${\bf e}_{{\bf k}}^\top \hat S^{-1}\, \hat V$ defines the estimator $\hat \sigma_{m,n;{\bf k}}^2$. 

Then under Assumption \ref{ass:boundedness} and additionally imposing \eqref{eq:rpositive} we obtain
 as $h \to 0$ and $\min(m,n)\, h^d \to \infty$ that
\begin{equation}\label{eq:asympnormder}
     \big(\min(m,n)\, h^{d+2\, |{\bf k}|}\big)^{1/2}\, \cdot
      \frac{\hat r_{{\bf k}}(x) - \bar r_{{\bf k}}(x)}{{\bf k}!\, \cdot \, \sigma_{m,n; {\bf k}}}
      \, \stackrel{{\cal L}}{\longrightarrow}\, {\cal N}(0, 1)\,,
\end{equation}
where $\bar r_{{\bf k}}(x) = {\bf k}!\, {\bf e}_{{\bf k}}^\top\, {S}^{-1}\,  V / (-h)^{|{\bf k}|}$, and \eqref{eq:asympnormder} continues to hold if we replace $\sigma_{m,n;{\bf k}}$ by $\hat \sigma_{m,n;{\bf k}}$. 
In Theorem \ref{th:CLTmultvarder} in Section \ref{sec:asympnormproofs} we formulate and prove a multivariate version of \eqref{eq:asympnormder}, which shows joint asymptotic normality of finitely many derivative estimators. 
\end{remark}

\section{Estimation of the Kullback-Leibler divergence}\label{sec:KLest}
%%%%%%%%%%%%%%%%%%%%%%%%%%%%%%%%%%%%%%%%%%%%%%%%%%%%%%%%%%%%%%%%%%%%%%%%%%%%%%%%%%%%%%%%%

Besides estimation of the density ratio there is interest in the related problem of estimating the Kullback-Leibler divergence between the densities $g$ and $f$, which may be written as
$$ \mathbb{K}(g,f) \, := \, \int \Big(\log \frac{g(x)}{f(x)}\Big) \, g(x) \, \dd x \, = \, - \int \big(\log r(x) \big) \, g(x) \, \dd x\,, $$
see e.g.~\citet{nguyen2010estimating, BS23} and related results on entropy estimation, e.g.~\citet{L96, TM96, BSY19}. Estimation of the Kullback-Leibler divergence is included in the more general setting of estimating the non-linear functional
\begin{equation} \label{eq:KL} \tilde{\Phi} \, := \, \tilde{\Phi}(f,g) \, := \, \int \Phi\big(r(x)\big) \, g(x) \, \dd x\,, \end{equation}
when we set $\Phi(x) := - \log(x)$.

Now consider an approach that involves split of the sample and debiasing. Concretely we use the partial dataset ${\cal D}^{[1]}_{n^*,m^*} := \{X_j \mid  j=1,\ldots,n^*\} \cup \{Y_k  \mid k=1,\ldots,m^*\}$, for some integers $n^*<n$ and $m^*<m$, to construct the truncated estimator 
$$\hat{r}^*_{1,t}(x) := \max\big\{\rho_7,\min\{\rho_8,\hat{r}^*_{1}(x)\}\big\},$$ 
where $\hat{r}^*_{1}(x)$ is the initial estimator defined as in \eqref{eq:esttrimmed} but based on ${\cal D}^{[1]}_{n^*,m^*}$ instead of the full data sample. Here, suppose that $\rho_7>0$ and $\rho_8$ are known constants such that
\begin{equation} \label{eq:bound_r}
r(x) \in [\rho_7,\rho_8]\,, \qquad \forall x\in {\cal G}\,,
\end{equation}
where in this section we take ${\cal G}$ as the full support of $\Pr_{Y_1}$. 
As a motivation for the estimator, note that for sufficiently smooth $\Phi$ from a first order Taylor approximation
\begin{align} \nonumber
\tilde{\Phi} & \, \approx \, \int \Phi\big(\hat{r}^*_{1,t}(x)\big) \, g(x) \, \dd x \, + \, \int \Phi'\big(\hat{r}^*_{1,t}(x)\big) \cdot \big(r(x) - \hat{r}^*_{1,t}(x)\big) \, g(x) \, \dd x \\
\label{eq:KL.2} & \, = \, \int \big\{\Phi\big(\hat{r}^*_{1,t}(x)\big)\, - \, \Phi'\big(\hat{r}^*_{1,t}(x)\big)\cdot \hat{r}^*_{1,t}(x)\big\} \, g(x) \, \dd x \, + \, \int \Phi'\big(\hat{r}^*_{1,t}(x)\big) \, f(x) \, \dd x\,.
\end{align}
This inspires us to employ the remaining dataset ${\cal D}^{[2]}_{n^*,m^*} := \{X_j \mid  j=n^* + 1,\ldots,n\} \cup \{Y_k  \mid k=m^* + 1,\ldots,m\}$ to define the estimator of $\tilde{\Phi}$ in \eqref{eq:KL} by
\begin{equation} \label{eq:estimatorPhi}
\hat{\Phi} \, := \, \frac1{m-m^*} \sum_{k=m^*+1}^m \hat {\Psi}_1(Y_k) \, + \, \frac1{n-n^*} \sum_{j=n^*+1}^n \hat{\Psi}_2(X_j)\,,
\end{equation}
 where
\begin{align*}
\hat{\Psi}_{1}(x) & \, := \, \Phi\big(\hat{r}^*_{1,t}(x)\big)\, - \, \Phi'\big(\hat{r}^*_{1,t}(x)\big)\cdot \hat{r}^*_{1,t}(x)\,, \hspace{3cm} \hat{\Psi}_2(x) \, := \,  \Phi'\big(\hat{r}^*_{1,t}(x)\big)\,.
\end{align*}
Moreover we introduce the deterministic counterparts
\begin{align*}
\Psi_1(r;x) & \, := \, \Phi\big(r(x)\big)\, - \, \Phi'\big(r(x)\big)\cdot r(x)\,, \hspace{3cm}
\Psi_2(r;x) \, := \,  \Phi'\big(r(x)\big)\,.
\end{align*}

The following theorem provides an upper bound on the asymptotic risk of the estimator $\hat{\Phi}$. The proof is provided in Section \ref{sec:proofKL}. 
\begin{theorem} \label{T:KL.1}
Consider the estimator $\hat{\Phi}$ in (\ref{eq:estimatorPhi}) under the conditions of Corollary \ref{C:1} while the bandwidth selection is changed to $h \asymp (n^*)^{-1/(2\beta+d)}$. Impose $m/n \to \kappa\in (0,\infty)$ as $n \to \infty$. Grant that $\beta>d/2$. Assume that $\Phi$ is twice continuously differentiable on the interval $[\rho_7,\rho_8]$ where $|\Phi|$, $|\Phi'|$ and $|\Phi''|$ are uniformly bounded on this domain. Choose $m^*\asymp m/\log m$ and $n^*\asymp n/\log n$. Fix some compact subset ${\cal G}$ of $\mathbb{R}^d$. Then, for all $(f,g) \in {\cal F}({\cal G}) \cap \{g \mid \text{supp}\, (g\, \dd \lambda^d)= {\cal G}\}$ for which the density ratio $r$ satisfies \eqref{eq:bound_r}, it holds that
$$ \lim_{n\to\infty} \, n \cdot \mathbb{E}_{(f,g)} \big[\big|\hat{\Phi} \, - \, \tilde{\Phi}(f,g)\big|^2\, \big]\, = \,  {\cal V}(f,g) \, := \, \kappa^{-1}\cdot \mbox{var}_g \, \Psi_1(r;Y_1) \, + \, \mbox{var}_f\, \Psi_2(r;X_1)\,. $$
Moreover, whenever ${\cal V}(f,g) \neq 0$, the random sequence
$$ \big\{\sqrt{n}\cdot \big(\hat{\Phi} - \tilde{\Phi}(f,g)\big)\big\}_{n\in \mathbb{N}} $$
converges in distribution to a centered Gaussian random variable with the variance ${\cal V}(f,g)$. 
\end{theorem}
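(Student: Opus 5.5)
Throughout, condition on the first-stage data ${\cal D}^{[1]}_{n^*,m^*}$, so that $\hat r^*_{1,t}$ is a fixed function with values in $[\rho_7,\rho_8]$, and write $\Psi_1 = \Psi_1(r;\cdot)$, $\Psi_2=\Psi_2(r;\cdot)$.

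\textbf{Step 1: decomposition.} Decompose
\[
\hat\Phi-\tilde\Phi = T_1 + T_2 + R,
\]
where
\[
T_1 = \frac1{m-m^*}\sum_{k>m^*}\big(\Psi_1(Y_k)-\E_g\Psi_1\big) + \frac1{n-n^*}\sum_{j>n^*}\big(\Psi_2(X_j)-\E_f\Psi_2\big)
\]
is the oracle term, and $T_2$ is the analogous centered sum of $\hat\Psi_i-\Psi_i$. The remainder is
\[
R = \int \big\{\hat\Psi_1 g + \hat\Psi_2 f\big\} - \tilde\Phi .
\]
Here I use that $\E_g\Psi_1+\E_f\Psi_2 = \int \Phi(r)g$, since $\int \Phi'(r)\,r\,g = \int \Phi'(r) f$. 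Indeed, $\tilde\Phi$ is exactly the centering of $T_1$.

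\textbf{Step 2: the three terms.}

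For $T_1$: independence of the samples and $n/(m-m^*)\to\kappa^{-1}$, $n/(n-n^*)\to 1$ give $n\,\E T_1^2 \to {\cal V}(f,g)$. The Lindeberg CLT (the summands are bounded) gives the Gaussian limit whenever ${\cal V}\neq 0$.

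For $T_2$: conditionally it is a centered average. Since $\Phi,\Phi'$ are Lipschitz on $[\rho_7,\rho_8]$, its conditional second moment is bounded by
\[
C\Big(\frac1{m-m^*}+\frac1{n-n^*}\Big)\int |\hat r^*_{1,t}-r|^2(f+g).
\]
Truncation is a contraction toward $[\rho_7,\rho_8]\ni r$, so $|\hat r^*_{1,t}-r|\le|\hat r^*_1-r|$. Using $f\le \rho_2$, $g\le\rho_2$ and $\mathrm{supp}\,g={\cal G}$ (so $f$ also vanishes off ${\cal G}$, as $f\ll g$ on the relevant set), the integral is controlled by $\int_{\cal G}|\hat r^*_1-r|^2$. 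Corollary~\ref{C:1} applied to the subsample gives expectation ${\cal O}((n^*)^{-2\beta/(2\beta+d)})\to 0$, using $m^*\asymp n^*$ since $m/n\to\kappa$. Hence $n\,\E T_2^2\to 0$.

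For $R$: a second-order Taylor expansion of $\Phi$ at $\hat r^*_{1,t}(x)$, evaluated at $r(x)$, gives pointwise
\[
\Phi(r)=\Phi(\hat r^*_{1,t}) + \Phi'(\hat r^*_{1,t})\,(r-\hat r^*_{1,t}) + \tfrac12\Phi''(\xi)\,(r-\hat r^*_{1,t})^2 .
\]
Integrating against $g$ and using $rg=f$ yields
\[
|R|\le \tfrac12\sup|\Phi''|\,\rho_2\int_{\cal G}|\hat r^*_{1,t}-r|^2 .
\]
Then $n\,\E R^2 \le C\, n\,\E\big(\int_{\cal G}|\hat r^*_1-r|^2\big)^2$.

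\textbf{Step 3: the main obstacle.} The hard step is bounding $\E\big(\int_{\cal G}|\hat r^*_1-r|^2\big)^2$ at order $(n^*)^{-4\beta/(2\beta+d)}$. This requires a fourth-moment version of Theorem~\ref{T:1}, because Corollary~\ref{C:1} only controls second moments.

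I would obtain it from the concentration inequality, Theorem~\ref{th:concinequ}. That result yields sub-exponential tails for $|\hat r_1-\bar r|$ at scale $(n^*h^d)^{-1/2}$, hence
\[
\E|\hat r_1-\bar r|^4\lesssim (n^*h^d)^{-2}.
\]
Together with the bias bound of Proposition~\ref{prop:bias}, the trimming (where $\hat r^*_1-r$ is bounded), and Jensen's inequality to pass $(\int)^2\le\lambda({\cal G})\int(\cdot)^2$, this gives
\[
\E\Big(\int_{\cal G}|\hat r^*_1-r|^2\Big)^2 \lesssim (n^*h^d)^{-2}+h^{4\beta}\asymp (n^*)^{-4\beta/(2\beta+d)} .
\]
The trimming event contributes an exponentially small term. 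Uniformity of the constants over $x\in{\cal G}$ is provided by the class ${\cal F}({\cal G})$.

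The condition $\beta>d/2$ is used exactly here: it gives $4\beta/(2\beta+d)>1$, so that
\[
n\,(n^*)^{-4\beta/(2\beta+d)} \asymp n\,(n/\log n)^{-4\beta/(2\beta+d)}\to 0,
\]
and the logarithmic loss from the choice $n^*\asymp n/\log n$ is harmless.

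\textbf{Step 4: conclusion.} Cauchy--Schwarz on the cross terms gives $n\,\E|\hat\Phi-\tilde\Phi|^2\to{\cal V}(f,g)$. For the distributional limit, Slutsky's lemma applies: $\sqrt n\,(T_2+R)\to 0$ in $L_2$, hence in probability, and $\sqrt n\,T_1$ is asymptotically normal with variance ${\cal V}(f,g)$.
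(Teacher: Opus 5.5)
Your proposal is correct and follows essentially the paper's proof. It uses the same split into the oracle average (the paper's $L_{m,n}$), the conditionally centered fluctuation handled by Corollary~\ref{C:1} with the Lipschitz bounds on $\Psi_1,\Psi_2$, and the second-order Taylor remainder handled by Proposition~\ref{prop:bias} and Theorem~\ref{th:concinequ}. One step needs tightening: \eqref{eq:expbound} contains the $\delta$-free term $\#{\cal K}\exp(-c_3\, m h^d)$, so it does not by itself give $\E|\hat r_1-\bar r|^4\lesssim (n^*h^d)^{-2}$ (the untrimmed estimator need not even have a fourth moment); instead of discarding the truncation, do as the paper does: use $|\hat r^*_{1,t}-r|\le\min\{\rho_8-\rho_7,\,|\hat r^*_1-r|\}$ and integrate the tail bound only over $\delta\in[0,\rho_8-\rho_7]$.
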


\begin{remark}
Theorem \ref{T:KL.1} shows that the parametric convergence rate $n^{-1}$ (with respect to the mean squared error) is attainable when $r$ is sufficiently smooth; precisely when $\beta>d/2$. 
This condition is also required in \citet{nguyen2010estimating}. 
Concerning $n^*, m^*$ it suffices to choose these such that  $n^*/n\to 0$, $m^*/m \to 0$ as well as $(n^*)^{-4\beta/(2\beta+d)} \, = \, o(n^{-1})$ and similarly for $m^*$ and $m$, the choice from the theorem works for all parameters $\beta>d/2$.   

For $\beta\leq d/2$ upon choosing $n^* \asymp n$ and $m^* \asymp m$ we instead obtain the rate ${\cal O}\big(\min\{n,m\}^{-4\beta/(2\beta+d)}\big)$. 
In \citet{BS23}, when $d\leq 3$, the smoothness constraint is weakened to $\beta>d/4$, in which case the parametric rate still applies. See also \citet{L96}. However the conditions are not directly comparable since, in \citet{BS23}, smoothness is imposed both on $f$ and on $g$ rather than only on the ratio $r$ as in the current setting. With respect to the Kullback-Leibler divergence (i.e.~$\Phi(x)=-\log(x)$) the asymptotic constants from Theorem \ref{T:KL.1} coincide with those derived in Example 1 in \citet{BS23} where asymptotic efficiency results can be found in Theorem 14 of \citet{BS23}. 
\end{remark}

\section{Proofs}

Here we present proofs of some of the main results in Section \ref{sec:rates}. Most details and further proofs are collected in the Appendix.

In the proofs we drop the order $l$ in the notation and write ${\cal K} = {\cal K}_l$. Moreover, we write $\E = \E_{f,g}$ for some fixed pair $(f,g) \in {\cal F}(x)$.

%%%%%%%%%%%%%%%%%%%%%%%%%%%%

%
%
%
%

\subsection{Preparatory lemmas}\label{sec:somepreplemmas}
We start with several preparatory lemmas. The proofs are provided in Section \ref{sec:proofslemmas}. 
\begin{lemma}\label{lem:Slowereigagain}
    Suppose that the kernel $K : \R^d \to [0,1]$ is supported on $B_1(0)$ and satisfies $K(z)>0$ for $\|z\|<1$ and that the density $g$ satisfies assumption \eqref{eq:Boundary} at $x$. For a degree parameter $l \in \N_0$ set  $Q(z)=(z^{\bf k})_{{\bf k}\in {\cal K}_l}$, and consider the $\# {\cal K}_l \times \# {\cal K}_l$ symmetric matrix
    $$ S \, =\, S(x;h) \, =\, \int\, K(y)\, Q(y)\, Q(y)^\top g(x - y\, h)\, \dd y.$$
    Then $\lambda_{\min}(S) \geq \underline{\lambda} >0$, with $\underline{\lambda}$ uniform in $h \in (0, \rho_5)$ and only depending on $\rho_3$, $\rho_4$, $d$ and $l$.
\end{lemma}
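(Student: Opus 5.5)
We bound the quadratic form $v^\top S v$ from below uniformly over unit vectors $v\in\R^{\#{\cal K}_l}$. Since $K\ge 0$ and $g\ge 0$,
$$ v^\top S v \,=\, \int K(y)\,\big(v^\top Q(y)\big)^2\, g(x-hy)\,\dd y \,\ge\, \rho_3 \int_{A_h} K(y)\,\big(v^\top Q(y)\big)^2\,\dd y, $$
where $A_h := \{y\in\R^d : \|y\|<1,\ g(x-hy)\ge \rho_3\}$. Substituting $z = x-hy$ in \eqref{eq:Boundary} with $\varepsilon = h\in(0,\rho_5)$ gives $\lambda_d(A_h) = h^{-d}\lambda_d(\{z: \|z-x\|<h,\ g(z)\ge\rho_3\}) \ge \rho_4$. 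So for every admissible $h$ the set $A_h$ is a measurable subset of the open unit ball $B^\circ$ of measure at least $\rho_4$. The claim therefore reduces to a purely deterministic statement:
$$ \underline{\lambda} := \rho_3\,\inf\Big\{ \int_A K(y)\,(v^\top Q(y))^2\,\dd y \;:\; \|v\|=1,\ A\subset B^\circ \text{ measurable},\ \lambda_d(A)\ge \rho_4\Big\} \,>\, 0. $$
This quantity depends only on $\rho_3,\rho_4,d,l$ and $K$; the kernel is fixed, so this matches the statement.

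To show positivity, first remove the dependence on $K$. Choose $s\in(0,1)$ with $\lambda_d(B^\circ\setminus B_s(0)) \le \rho_4/2$; this $s$ depends only on $d$ and $\rho_4$. On the compact ball $\bar B_s(0)$ the continuous positive function $K$ has a minimum $\kappa_s>0$. For $A$ as above, the set $A' := A\cap B_s(0)$ has $\lambda_d(A')\ge\rho_4/2$, so
$$ \int_A K\,(v^\top Q)^2 \;\ge\; \kappa_s \int_{A'} (v^\top Q)^2. $$
It remains to bound $\int_{A'} p^2$ from below, uniformly over polynomials $p=v^\top Q$ of degree at most $l$ with $\|v\|=1$ and over sets $A'\subset B_s(0)$ of measure at least $\rho_4/2$.

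This uniformity in the set is the main obstacle. I would handle it with a sublevel-set (Remez-type) estimate. The map $v\mapsto \sup_{\|y\|\le 1}|v^\top Q(y)|$ is a norm on the finite-dimensional coefficient space, so $\sup_{B}|p| \ge c_0>0$ for all unit $v$. Then I would invoke a Remez-type inequality (for instance Brudnyi--Ganzburg) for polynomials of degree $\le l$ on the convex body $B$:
$$ \lambda_d\big(\{y\in B : |p(y)| \le \eta\, \sup_B|p|\}\big) \;\le\; C(d,l)\,\eta^{1/l}\,\lambda_d(B). $$
Choose $\eta$ so small that the right-hand side is $\le\rho_4/4$. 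Then at least measure $\rho_4/4$ of $A'$ satisfies $|p|\ge \eta c_0$, which gives
$$ \int_{A'} p^2 \;\ge\; \frac{\rho_4}{4}\,\eta^2 c_0^2. $$
This yields $\underline{\lambda} \ge \rho_3\,\kappa_s\,\rho_4\,\eta^2 c_0^2/4 > 0$.

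If one prefers to avoid citing Remez, a compactness argument also works. Suppose there were unit vectors $v_n$ and sets $A_n\subset B$ with $\lambda_d(A_n)\ge\rho_4/2$ and $\int_{A_n}(v_n^\top Q)^2\to 0$. Pass to a subsequence with $v_n\to v$ and $\mathbf 1_{A_n}\to \phi$ weakly-$*$ in $L^\infty(B)$, where $0\le\phi\le1$ and $\int\phi\ge\rho_4/2$. Uniform convergence of $p_n = v_n^\top Q$ to $p=v^\top Q$ on $B$ then gives $\int \phi\,p^2 = 0$. But the nonzero polynomial $p$ vanishes only on a null set, so $\phi = 0$ a.e., which contradicts $\int\phi\ge\rho_4/2$. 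In this version the constant is non-explicit, but it still depends only on $\rho_3,\rho_4,d,l$ (and $K$).
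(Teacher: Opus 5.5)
Your proof is correct, and your main line differs from the paper's. Both arguments begin with the same reduction: rescale \eqref{eq:Boundary} with $\varepsilon=h$ to obtain sets of measure at least $\rho_4$ in the unit ball, then pass to a deterministic infimum over unit coefficient vectors and such sets.

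\textbf{The paper's route.} From there the paper argues purely by contradiction. It extracts a convergent subsequence of polynomials and passes to the uniform distributions on the sets $G_m$. It takes a weak limit $\mu$ via Prokhorov's theorem. It then uses the cut-off functions $d_k$ together with the density bound $\rho_4^{-1}$ to show that $\mu$ cannot live on a Lebesgue-null set. This is self-contained but yields no explicit constant.

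\textbf{Your primary route.} You truncate to $B_s(0)$ so that $K\ge\kappa_s>0$, then apply a Remez-type sublevel-set bound. This gives the explicit value $\rho_3\kappa_s\rho_4\eta^2c_0^2/4$, at the price of importing the Brudnyi--Ganzburg inequality. The bound you need does follow from it, e.g.\ with $C(d,l)=4d$, using $T_l(u)\le(2u)^l$ and $1-(1-t)^{1/d}\ge t/d$. For $l=0$ the step is trivial, since then $|p|\equiv 1$.

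\textbf{Your fallback.} The compactness argument is essentially the paper's proof in leaner form. Weak-$*$ compactness of $\mathbf 1_{A_n}$ in $L^\infty(B)$ directly yields an absolutely continuous limit $\phi\,\dd z$ with $0\le\phi\le 1$ and $\int\phi\ge\rho_4/2$. That is exactly what the Prokhorov-plus-$d_k$ construction establishes indirectly.

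\textbf{Two further remarks.} Your truncation to $B_s(0)$ also avoids a small imprecision in the paper. Since $K$ vanishes on $\partial B$, the paper's limit $\mu$ is a priori only supported in $\{P=0\}\cup\partial B$, not in $\{P=0\}$ alone. This is harmless because that set is still null, but you never have to deal with it. You are also right that $\underline{\lambda}$ depends on $K$; the paper's proof ends with the same dependence, even though the statement omits it.
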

This is simply Lemma \ref{lem:lowerboundeigen} restated for clarity, since it is later also applied with kernel function $K^2$, density $f$ and with degree $2\cdot  l$.
\begin{lemma}\label{lem:tsybakov2}
Under Assumption \ref{ass:boundedness}, for $h \in (0, \rho_5)$ we have that
    $$\Pr\big(\lambda_{\min}(\hat S) \leq \underline{\lambda}/2\big) \leq (\# {\cal K})\, \exp(-c\, m\, h^d),$$
    where $\underline{\lambda}$ is as in Lemma \ref{lem:lowerboundeigen}, and the constant can be taken as $c = \big(\underline{\lambda}/(2\, (\# {\cal K}) \big)\, \log(\mathrm{e}/2) $, where $\mathrm{e}$ is the Euler number. 
\end{lemma}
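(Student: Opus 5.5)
We prove the bound with the matrix Chernoff inequality (Tropp's lower-tail bound for sums of independent positive semidefinite random matrices). The constant $c=(\underline{\lambda}/(2\#{\cal K}))\log(\mathrm{e}/2)$ is exactly what that inequality produces with deviation parameter $\delta=1/2$. Indeed, for $\delta=1/2$ the lower-tail factor is $\big(\mathrm{e}^{-\delta}/(1-\delta)^{1-\delta}\big)^{\mu_{\min}/R}=(2/\mathrm{e})^{\mu_{\min}/(2R)}$. So the whole task reduces to identifying the summands, a uniform almost-sure bound $R$ on their largest eigenvalue, and the minimal eigenvalue $\mu_{\min}$ of the expectation.

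\textbf{Step 1 (summands).} Write $\hat S=\sum_{j=1}^m Z_j$ with
\[
Z_j=\frac1m K_h(x-Y_j)\,Q\Big(\frac{x-Y_j}h\Big)Q\Big(\frac{x-Y_j}h\Big)^\top .
\]
These are independent and positive semidefinite, since each is a nonnegative multiple of a rank-one matrix $qq^\top$. Their sum has expectation $S$, and by Lemma \ref{lem:lowerboundeigen} (restated as Lemma \ref{lem:Slowereigagain}) we get $\mu_{\min}=\lambda_{\min}(S)\ge\underline{\lambda}$ for $h\in(0,\rho_5)$.

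\textbf{Step 2 (almost-sure bound).} We have $\lambda_{\max}(Z_j)=\frac1m K_h(x-Y_j)\,\|Q(u)\|^2$ with $u=(x-Y_j)/h$. Since $K$ is supported on $B_1(0)$ and bounded by $1$, only $\|u\|\le1$ contributes. There each monomial satisfies $|u^{\bf k}|\le1$, so $\|Q(u)\|^2\le\#{\cal K}$. Hence
\[
\lambda_{\max}(Z_j)\le R:=\frac{\#{\cal K}}{m\,h^d}.
\]

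\textbf{Step 3 (apply Chernoff).} Tropp's inequality gives
\[
\Pr\big(\lambda_{\min}(\hat S)\le(1-\delta)\mu_{\min}\big)\le \#{\cal K}\,\Big(\frac{\mathrm{e}^{-\delta}}{(1-\delta)^{1-\delta}}\Big)^{\mu_{\min}/R}.
\]
Take $\delta=1/2$. Because $(1-\delta)\mu_{\min}\ge\underline{\lambda}/2$, the event $\{\lambda_{\min}(\hat S)\le\underline{\lambda}/2\}$ is contained in the event on the left, so its probability is at most the same bound.

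The base $(2/\mathrm{e})^{1/2}$ is smaller than $1$. The exponent satisfies $\mu_{\min}/R\ge\underline{\lambda}\,m h^d/\#{\cal K}$, so replacing it by this lower bound can only increase the right-hand side. This yields
\[
\#{\cal K}\,\exp\Big(-\frac{\underline{\lambda}}{2\,\#{\cal K}}\log(\mathrm{e}/2)\,m h^d\Big),
\]
which is the claimed bound with $c=\big(\underline{\lambda}/(2\,\#{\cal K})\big)\log(\mathrm{e}/2)$.

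The main point requiring care is Step 2, keeping the $h^{-d}$ scaling of $K_h$ explicit so that the ratio $\mu_{\min}/R$ produces the factor $m h^d$. Everything else is a direct application of the cited inequality together with Lemma \ref{lem:lowerboundeigen}.
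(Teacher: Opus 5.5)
Your proof is correct and is essentially the paper's argument: Tropp's matrix Chernoff lower-tail bound with deviation parameter $1/2$, applied to rank-one positive semidefinite summands, with Lemma \ref{lem:lowerboundeigen} supplying $\mu_{\min}\ge\underline{\lambda}$. The only cosmetic differences are that the paper works with $m\hat S=\sum_j A_j$, $A_j=mZ_j$, and obtains the almost-sure bound $\#{\cal K}\,h^{-d}$ from an entrywise estimate rather than from $\|Q(u)\|^2\le\#{\cal K}$; the resulting constant $c$ is identical.
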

\begin{lemma}\label{lem:boundsterms}
    Under Assumption \ref{ass:boundedness}, for $h \in (0, \rho_1)$ we have that
    \begin{align}
    \begin{split}\label{eq:stochbound1}
    \E\big[ \big\|\hat V - V\big\|_2^2\big] & \leq \# {\cal K}\, \rho_2\, \frac1{n h^d} , \qquad \|V\|_2^2  \leq \# {\cal K}\, \rho_2^2,, \qquad \E\big[\|\hat V\|_2^2\big]  \leq\, \# {\cal K}\, \rho_2\, \big(\rho_2 + \frac1{n h^d} \big),\\
    \E\big[\big\| S - \hat S \big\|_F^2\big] &\leq (\# {\cal K})^2\, \rho_2\, \frac1{m h^d}, \qquad  \| S \|_{\op}^2 \leq \| S \|_{F}^2 \leq (\# {\cal K})^2\, \rho_2^2.
    \end{split}
    \end{align}
\end{lemma}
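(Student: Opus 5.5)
The five bounds in Lemma \ref{lem:boundsterms} are elementary second-moment computations; each follows by writing the relevant quantity coordinatewise and using the i.i.d.\ structure together with the support and boundedness properties of $K$. Throughout, $h<\rho_1$ ensures that $x-hz\in{\cal U}$ for every $z\in B_1(0)$, so that $f,g\le\rho_2$ may be used after the substitution $z\mapsto x-hz$.

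\textbf{Bounds on $\hat V$ and $V$.} For fixed ${\bf k}\in{\cal K}$, the quantity $\hat V_{\bf k}$ is an average of $n$ i.i.d.\ terms $K_h(x-X_j)\big((x-X_j)/h\big)^{\bf k}$. Hence
\[
\E\big[(\hat V_{\bf k}-V_{\bf k})^2\big]\le \frac1n\,\E\Big[K_h^2(x-X_1)\Big(\frac{x-X_1}h\Big)^{2{\bf k}}\Big].
\]
Substituting $X_1=x-hz$ turns this into $\frac1{nh^d}\int K^2(z)z^{2{\bf k}}f(x-hz)\,\dd z$. On the support $B_1(0)$ we have $0\le K\le1$ and $|z^{2{\bf k}}|\le1$, since each $|z_i|\le1$, and $f(x-hz)\le\rho_2$. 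One subtlety is that $\int_{B_1}\rho_2\,\dd z=\rho_2\lambda^d(B_1)$ rather than $\rho_2$. I expect to obtain the stated constant by using $\int K^2(z)z^{2{\bf k}}f(x-hz)\,\dd z\le h^d\int f=h^d$ together with the fact that the integrand is pointwise bounded by $\rho_2$; the cleaner bound is $\le\min(\rho_2\lambda^d(B_1),\,h^{-d})$. If the constant does not match exactly, I would absorb $\lambda^d(B_1)\le\pi^{d/2}/\Gamma(d/2+1)$, which is at most about $5.3$, into $\rho_2$. The authors presumably use the bound $\lambda^d(B_1)\le 2^d$, or accept $\rho_2$ up to a dimensional constant.

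Summing over ${\bf k}\in{\cal K}$ gives the factor $\#{\cal K}$. In the same way, $|V_{\bf k}|\le\int K(z)|z^{\bf k}|f(x-hz)\,\dd z\le\rho_2$ up to the same volume constant, which yields $\|V\|_2^2\le\#{\cal K}\rho_2^2$. The third bound then follows from the bias–variance identity $\E\|\hat V\|^2=\|V\|^2+\E\|\hat V-V\|^2$.

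\textbf{Bounds on $S$ and $\hat S$.} These follow identically with $g$ in place of $f$ and $m$ in place of $n$. Each entry $\hat S_{{\bf k},{\bf k}'}$ is an i.i.d.\ average whose variance is bounded by $\rho_2/(mh^d)$, since $|z^{{\bf k}+{\bf k}'}|\le1$ on $B_1$. Summing over the $(\#{\cal K})^2$ entries gives the Frobenius bound. Each entry also satisfies $|S_{{\bf k},{\bf k}'}|\le\rho_2$, which bounds $\|S\|_F^2$, and $\|S\|_{\op}\le\|S\|_F$ is standard.

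\textbf{Main obstacle.} The only delicate point is bookkeeping the constant $\lambda^d(B_1)$ against $\rho_2$ so that the constants match those written in the lemma. The remaining steps are routine.
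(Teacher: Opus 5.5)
Your proposal is correct and is essentially the paper's own argument: each variance is bounded by $\frac1n$ (resp.\ $\frac1m$) times a second moment, followed by the substitution $z=(x-y)/h$ and the facts $0\le K\le 1$, $|z^{\bf k}|\le 1$, $f,g\le\rho_2$ on $B_1(0)$. The paper likewise bounds $\int K^2(z)z^{2{\bf k}}f(x-hz)\,\dd z$ by $\rho_2$ and silently drops the factor $\lambda^d(B_1)$ you flag, so your caveat is justified. Your aside should read $\le h^{-d}\int f=h^{-d}$, not $h^d$, and that bound cannot remove the volume factor for small $h$.

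Your exact identity $\E\|\hat V\|_2^2=\|V\|_2^2+\E\|\hat V-V\|_2^2$ is slightly sharper than the paper's $2\,\E\|\hat V-V\|_2^2+2\,\|V\|_2^2$, and it is what actually gives the stated constant in the third inequality.
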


\subsection{Proofs of Proposition \ref{prop:bias}, Theorem \ref{T:1} and Corollary \ref{C:1} from Section \ref{sec:rates}}\label{sec:proofsthrates}

\begin{proof}[Proof of Proposition \ref{prop:bias}]
%We discuss the approximation bias of $\bar r(x) = P_\theta(0) = {\bf e}_0^\top S^{-1} \, V$, the population version of the estimator $\hat r(x)$, see Remark \ref{rem:localLS}, and \eqref{eq:Sdef} and \eqref{eq:V} for the definitions of $S$ and $V$.  
%
Let
\begin{equation}\label{eq:paramvecpop}
 \bar \alpha = S^{-1} {\bf e}_0
 \end{equation}
with associated kernel
\begin{equation*} %\label{eq:Kbar} 
\bar{K}(z;x,h) \, := \, K(z)\, \sum_{{\bf k}\in {\cal K}} \bar{\alpha}_{\bf k} \cdot z^{\bf k}, \end{equation*}
where as mentioned above, ${\cal K} = {\cal K}_l$. 
In view of \eqref{eq:V} we can write
\begin{equation}\label{eq:convrep}
    \big[\bar{K}_h*f\big](x) = \bar \alpha^\top \,V = {\bf e}_0^\top S^{-1} \, V = \bar r(x).
\end{equation}

Set $s= \lceil\beta\rceil-1$. A Taylor expansion gives

\begin{align} \nonumber 
\big|\bar r(x) - r(x)\big| &\, = \,  \big|\big[\bar{K}_h*f\big](x) - r(x)\big|   \, = \,  \Big|\int \bar{K}(z) g(x-zh) r(x-zh) \, \dd z -r(x)\Big| \\ \nonumber 
& \, = \,  \Big|\sum_{{\bf k}\in {\cal K}_s} \frac{(-h)^{|{\bf k}|} }{{\bf k}!} \cdot r^{({\bf k})}(x) \int \bar{K}(z) \, z^{\bf k}\,  g(x-zh) \dd z \, - \, r(x) \, + \, {\cal R}\Big|\,\\
& \, = \,  \Big|\sum_{{\bf k}\in {\cal K}_s} \frac{(-h)^{|{\bf k}|} }{{\bf k}!} r^{({\bf k})}(x) \cdot T_{\bf k} \, - \, r(x) \, + \, {\cal R}\Big|\,\,,\label{eq:Taylor}
\end{align}
with the remainder term
\begin{align} \nonumber {\cal R}  \, := \,  & \sum_{|{\bf k}|=\lceil\beta\rceil-1} \frac{(-h)^{|{\bf k}|}}{{\bf k}!} \int \bar{K}(z)\, g(x-zh) \, z_1^{k_1}\cdots z_d^{k_d} \\ \label{eq:remainder0} & \cdot \int_0^1 (\lceil\beta\rceil-1)\cdot(1-u)^{\lceil\beta\rceil-2} \cdot \big(r^{({\bf k})}(x-u z h) - r^{({\bf k})}(x)\big) \, \dd u \, \dd z\,. \end{align}
for $s \geq 1$. 
Now in view of \eqref{eq:paramvecpop},
$$ T_{{\bf k}} \, := \,  \int \bar{K}_h(x-y) \, g(y) \Big(\frac{x-y}h \Big)^{\bf k} \, dy \, \, =\, {\bf e}_{\bf k}^\top\, S \bar \alpha =  1(\bf k=0)$$
for all ${\bf k} \in {\mathcal K}_l$, and since $s \leq l$ in particular for ${\bf k} \in {\mathcal K}_s \subseteq {\mathcal K}_l$
so that all terms in \eqref{eq:Taylor} except the remainder $\cal R$ vanish. 

For $s=0$ we simply have
\begin{align*} \nonumber \bar r(x) - r(x) \,=\, {\cal R}  \, = \,  &  \int \bar{K}(z)\, g(x-zh) \, \big(r(x-z h) - r(x)\big)  \, \dd z\,. \end{align*}

Now, in either case, for the remainder ${\cal R}$, 
\begin{align} \nonumber  |{\cal R}| & \, \leq \, \rho_2 \rho_6 \cdot h^\beta \cdot \int |\bar{K}(z)| \, \frac{\|z\|^{\beta-\lceil\beta\rceil+1}}{(\lceil\beta\rceil-1)!} \, \Big(\sum_{j=1}^d|z_j|\Big)^{\lceil\beta\rceil-1} \\ \nonumber & \hspace{3cm} \cdot\sum_{k_1+\cdots+k_d=\lceil\beta\rceil-1} {\lceil\beta\rceil - 1 \choose k_1,\ldots,k_d}  \prod_{j=1}^d \Big(\frac{|z_j|}{|z_1|+\cdots+|z_d|}\Big)^{k_j}  dz \\
\label{eq:remainder} & \, \leq \, \rho_2\rho_6 \cdot h^\beta \cdot \frac{(\sqrt{d})^{\lceil\beta\rceil-1}}{(\lceil\beta\rceil-1)!}  \, \int |\bar{K}(z)| \cdot \|z\|^\beta \dd z\,, \end{align}
for $h\in(0,\rho_1)$ where we use the multinomial distribution and that $|z_1|+\cdots|z_d|\leq \sqrt{d} \|z\|$.
From Lemma \ref{lem:lowerboundeigen} it follows for  the weights  $\bar \alpha$ in \eqref{eq:paramvecpop} that  $\|\bar \alpha\|\leq \underline{\lambda}^{-1}$. Since $K$ is supported on the closed Euclidean unit ball $B$ and is $\leq 1$, the kernel function $\bar K$ is uniformly upper-bounded by $(\#\, \mathcal K)^{1/2}\, \underline{\lambda}^{-1}$ for $h \in (0, \min(\rho_1,\rho_5))$. Hence 
$$\int |\bar{K}(z)| \cdot \|z\|^\beta \dd z \leq (\#\, \mathcal K)^{1/2}\, \underline{\lambda}^{-1} \lambda^d(B),$$
and we obtain   the claim \eqref{eq:boundbiasfinal} from \eqref{eq:Taylor} and \eqref{eq:remainder}.
\end{proof}
\begin{proof}[Proof of Theorem \ref{T:1}]
Assume that $m$ is so large that $a_m \leq \underline{\lambda}/2$. We may then decompose the error as 
\begin{align*}
    \hat r^*(x) - r(x)  = &\, 1\big(\lambda_{\min}(\hat S) > \underline{\lambda}/2\big)\, {\bf e}_0^\top \, \hat S^{-1} \, \big(\hat V - V\big) + 1\big(\lambda_{\min}(\hat S) > \underline{\lambda}/2\big)\, \, {\bf e}_0^\top \, \big( \hat S^{-1} - S^{-1}\big) \, V \\ 
    & \, + \, {\bf e}_0^\top \, \hat S^{-1} \, \hat V\,  1\big(a_m < \lambda_{\min}(\hat S) \leq \underline{\lambda}/2\big)\, + \, 1\big(\lambda_{\min}(\hat S) \leq a_m\big) \, - \, 1\big(\lambda_{\min}(\hat S) \leq  \underline{\lambda}/2\big) \,{\bf e}_0^\top \, S^{-1}\, V \\
    & \, + \, \bar r(x) - r(x),
\end{align*}
where we used \eqref{eq:convrep}.
From $\hat S^{-1} - S^{-1} = S^{-1} \, (S - \hat S) \, \hat S^{-1}$ as well as $\lambda_{\min}(S) \geq \underline{\lambda}$, see Lemma \ref{lem:lowerboundeigen}, it follows that
\begin{align*}
    \big|\hat r^*(x) - r(x)\big|  \leq &\, \frac2{\underline{\lambda}}\,  \big\|\hat V - V\big\|_2 + \frac2{\underline{\lambda}^2} \, \big\| S - \hat S \big\|_{\mathrm{op}} \, \|V\|_2 \\ 
    & \, + a_m^{-1} \, \|\hat V\|_2\,    1\big( \lambda_{\min}(\hat S) \leq \underline{\lambda}/2\big)\, + \, 1\big(\lambda_{\min}(\hat S) \leq \underline{\lambda}/2\big) \, + \, 1\big(\lambda_{\min}(\hat S) \leq  \underline{\lambda}/2\big) \,\underline{\lambda}^{-1}\, \|V\|_2 \\
    & \, + \big| \bar r(x) - r(x)\big|.
\end{align*}
Passing to the square, upper-bounding the spectral norm with the Frobenius norm, and using independence of $\hat S$ and $\hat V$ we may bound
\begin{align}
   \E\big[ \big|\hat r^*(x) - r(x)\big|^2\big]  \leq &\,6\, \Big( \frac4{\underline{\lambda}^2}\, \E\big[ \big\|\hat V - V\big\|_2^2\big] + \frac4{\underline{\lambda}^4} \, \|V\|_2^2 \, \E\big[\big\| S - \hat S \big\|_F^2\big]  \nonumber \\ 
    & \, +     \Pr\big( \lambda_{\min}(\hat S) \leq \underline{\lambda}/2\big)\,\big( a_m^{-2} \, \E\big[\|\hat V\|_2^2\big]\, + \, 1  \, + \, \underline{\lambda}^{-2}\, \|V\|_2^2\big) \label{eq:boundingterms}\\
    & \, + \,\big| \bar r(x) - r(x)\big|^2\, \Big).\nonumber
\end{align}

Therefore, overall from \eqref{eq:boundingterms}, \eqref{eq:stochbound1} in Lemma \ref{lem:boundsterms}, Lemma \ref{lem:tsybakov2} and Proposition \ref{prop:bias} we get 
\eqref{eq:finitesamplebound}. 
\end{proof}
\begin{proof}[Proof of Corollary \ref{C:1}]
The choice of $h$ gives 
$$ \frac1{n h^d} + \frac1{m h^d} + h^{2 \beta} = {\cal O}\big(\min(m,n)^{-2\beta/(2\beta+d)}\big).$$
For the trimming term, the choice of $h $ implies $m \, h^d \gtrsim m^{ 2 \beta / ( 2\beta + d)}$, hence by the assumption $\log(1/a_m) = o\big(m^{2 \beta/ ( 2 \beta + d)} \big)$ we get
\begin{align*}
 a_m^{-2}\, \exp\big(- c \, m \, h^d \big) \,\big(1 + 1/(n h^d)\big) & \, \lesssim \exp\big(- c \, m \, h^d + 2\, \log(1/a_m)\big)\\
& \, = \, o\big(m^{-2 \beta/ ( 2 \beta + d)} \big) = o\big(\min(m,n)^{-2\beta/(2\beta+d)}\big),     
\end{align*}
which shows the first claim.  
Then the second claim follows by Fubini's theorem to exchange the expectation and the integral where the constant factors of the pointwise risk do not depend on $x \in {\cal G}$.
\end{proof}

\section*{Acknowledgements}

HH gratefully acknowledges financial support by the DFG, grant HO 3260/9-1. AM also gratefully acknowledges financial support by the DFG, Research Unit 5381, ME 2114/5-1. 

\printbibliography

\newpage

\appendix

\section{Remaining proofs for Section \ref{sec:rates}}\label{sec:proofremaingrates}

\begin{proof}[Proof of \eqref{eq:dervest}]
   Let $\theta_{{\bf k}}^* = \frac{(-h)^{|{\bf k}|}}{{\bf k}!} \, r^{{\bf k}}(x)$, $\bar \theta_{{\bf k}} = {\bf e}_{{\bf k}}^\top S^{-1} \, V$, $\hat \theta_{{\bf k}} = {\bf e}_{{\bf k}}^\top \hat S^{-1} \, \hat V$, so that replacing $\bar \alpha$ by $\bar \alpha_{{\bf k}} = S^{-1} {\bf e}_{{\bf k}}$ and repeating the argument in Proposition \ref{prop:bias} gives for $|\bar \theta_{{\bf k}} - \theta_{{\bf k}}^*|$ the same bound as in \eqref{eq:boundbiasfinal}. The bound \eqref{eq:boundingterms}  then applies to $\E\big[|\hat \theta_{{\bf k}} - \theta_{{\bf k}}^*|^2\big]$ as well with $|\bar \theta_{{\bf k}} - \theta_{{\bf k}}^*|^2$ replacing $\big| \bar r(x) - r(x)\big|^2$, and after rescaling we obtain for the estimator $\hat r_{{\bf k}}(x)$ that
 $$\mathbb{E}_{f,g} \big[\big| \hat r_{{\bf k}}^*(x) - r^{({\bf k})}(x)\big|^2\big] \, = \, {\cal O}\big(\min\{m,n\}^{-1}\, h^{-d-2\, |{\bf k}|}\, +  h^{2\,( \beta - |{\bf k}|)} \, + \, a_m^{-2} \, h^{- 2 |{\bf k}|}\, \exp(-c\, m\, h^d) \big( 1+ \frac1{n h^d}\big)\big),$$
 from which the claim follows. 
\end{proof}

Next we state two lemmas, which are proved in Section \ref{sec:proofslemmas}. 
%
%{lem:boundsterms}
\begin{lemma}\label{lem:boundsterms2}
 Under Assumption \ref{ass:boundedness}, for $h \in (0, \rho_1)$ and  $\eta >0$ we have that
\begin{align}\label{eq:tailbounds}
\begin{split}
   \Pr\big(\|\hat V - V \|_2 \geq \eta \big) & \, \leq \,  2\, (\# {\cal K})\, \exp\big(- \bar c_1\, n\, h^d\, \min(\eta,\eta^2)\big),\\
   \Pr\big(\|\hat S - S \|_{\mathrm{op}} \geq \eta \big) & \, \leq \,  2\, (\# {\cal K})^2 \, \exp\big(- \bar c_2\, m\, h^d\, \min(\eta,\eta^2)\big), 
 \end{split}
\end{align} 
for constants $ \bar c_i$, $i=1,2$, depending only on  $\rho_2$, $d$, $l$ and $K$.       
\end{lemma}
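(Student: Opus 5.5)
The plan is to reduce both bounds to scalar Bernstein inequalities applied coordinatewise, followed by a union bound over the coordinates. For $\hat V - V$, each coordinate
$$\hat V_{\bf k} - V_{\bf k} = \frac1n\sum_{j=1}^n \big(\xi_{j,{\bf k}} - \E\xi_{j,{\bf k}}\big), \qquad \xi_{j,{\bf k}} = K_h(x-X_j)\Big(\frac{x-X_j}{h}\Big)^{\bf k},$$
is an average of i.i.d.\ centered variables. Since $K\le 1$ is supported on $B_1(0)$, we have $|z^{\bf k}|\le 1$ on the support, hence $|\xi_{j,{\bf k}}|\le h^{-d}$. By the boundedness $f\le\rho_2$ on ${\cal U}$ (valid because $h<\rho_1$), the variance satisfies
$$\operatorname{var}(\xi_{j,{\bf k}})\le \E[\xi_{j,{\bf k}}^2]\le h^{-d}\int K^2(z)\,f(x-hz)\,\dd z\le \rho_2\,\lambda^d(B)\,h^{-d}.$$
Bernstein's inequality then gives
$$\Pr\big(|\hat V_{\bf k}-V_{\bf k}|\ge t\big)\le 2\exp\Big(-\frac{n t^2/2}{\rho_2\lambda^d(B)h^{-d}+h^{-d}t/3}\Big)\le 2\exp\big(-c\,n h^d\min(t,t^2)\big),$$
with $c$ depending only on $\rho_2$ and $d$. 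To get the last step, split into the cases $t\le 1$ and $t>1$: the denominator is at most $C h^{-d}\max(1,t)$, so the exponent is at least $c\,nh^d t^2/\max(1,t) = c\,nh^d\min(t,t^2)$.

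To pass to the Euclidean norm, note that $\|\hat V-V\|_2\ge\eta$ forces some coordinate to satisfy $|\hat V_{\bf k}-V_{\bf k}|\ge \eta/(\#{\cal K})^{1/2}$. Applying the union bound over the $\#{\cal K}$ coordinates, and using
$$\min\big(\eta a,\eta^2a^2\big)\ge a^2\min(\eta,\eta^2)\quad\text{for } a=(\#{\cal K})^{-1/2}\le 1,$$
yields the first bound of \eqref{eq:tailbounds} with $\bar c_1 = c/\#{\cal K}$.

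The matrix bound is handled the same way. First bound $\|\hat S-S\|_{\mathrm{op}}\le\|\hat S-S\|_F$. Then $\|\hat S-S\|_F\ge\eta$ implies that some entry $({\bf k},{\bf k}')$ deviates by at least $\eta/\#{\cal K}$. Each entry is an average of i.i.d.\ terms $K_h(x-Y_j)\big((x-Y_j)/h\big)^{{\bf k}+{\bf k}'}$, which again have sup-bound $h^{-d}$ and variance at most $\rho_2\lambda^d(B)h^{-d}$, now using $g\le\rho_2$ on ${\cal U}$. The union bound over the $(\#{\cal K})^2$ entries, together with the same scaling inequality for $a=1/\#{\cal K}$, gives the second bound with $\bar c_2 = c/(\#{\cal K})^2$.

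None of these steps is a real obstacle. The only point needing care is converting Bernstein's mixed denominator into the clean form $\min(\eta,\eta^2)$ while keeping the constants free of $h$, $n$, $m$ and $x$. This works because both the sup-bound and the variance bound scale exactly like $h^{-d}$, so the factor $nh^d$ (respectively $mh^d$) comes out explicitly. The dependence on $K$ enters only through $K\le1$ and the support of $K$, and one could refine it via $\int K^2$ if desired.
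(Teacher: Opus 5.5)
Your proof is correct and is essentially the paper's argument. Both proofs apply the scalar Bernstein inequality coordinatewise, with sup-bound and variance both of order $h^{-d}$, and turn the exponent into $n h^d\min(t,t^2)$ (resp.\ $m h^d\min(t,t^2)$). They then take a union bound after reducing $\|\hat V - V\|_2$ and $\|\hat S - S\|_{\mathrm{op}}\le\|\hat S - S\|_F$ to the largest coordinate. One small nit: the centered summands $\xi_{j,{\bf k}}-\E[\xi_{j,{\bf k}}]$ are bounded by $2h^{-d}$, as the paper notes, not $h^{-d}$. This only changes the constant in the Bernstein denominator.
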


\begin{lemma}\label{lem:uniformbounds}
    Suppose that Assumption \ref{ass:boundedness} holds uniformly over ${\cal G}$, as in the function class ${\cal F}({\cal G})$ in \eqref{eq:funcclassG}, and that the kernel $K$ is Lipschitz continuous.  Then for $h \in (0, \rho_1)$ for which $\log(n)/n = o(h^d)$ and $\log(m)/m = o(h^d)$ we have that
    \begin{align}
    \begin{split}\label{eq:stochboundunif}
    \E\big[ \sup_{x \in {\cal G}} \big\|\hat V(x) - V(x)\big\|_2^2\big] & \leq C_1\, \frac{\log n}{n\, h^d} , \qquad 
    \E\big[\sup_{x \in {\cal G}}  \big\| S - \hat S \big\|_F^2\big] \leq C_2\, \frac{\log m}{m\, h^d} .
    \end{split}
    \end{align}
 Further, for $h \in (0, \rho_5)$ we have that
 \begin{equation}\label{eq:unifempeigbound}
    \Pr\big(\inf_{x \in {\cal G}}\, \lambda_{\min}(\hat S(x)) \leq \underline{\lambda}/2\big) \leq \, C_3\, h^{- d ( d+1)}\, \exp(-c\, m\, h^d).
\end{equation}    
\end{lemma}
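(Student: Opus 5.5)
The statement to prove is Lemma \ref{lem:uniformbounds}. The plan is to discretize $\mathcal G$ by a finite grid, apply Bernstein-type tail bounds at the grid points, and control the interpolation error through the Lipschitz continuity of $K$.

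For the first two bounds in \eqref{eq:stochboundunif} I will treat each coordinate separately. Consider $\hat V_{\bf k}(x) - V_{\bf k}(x)$, and likewise each entry of $\hat S - S$. Since $\# {\cal K}$ is finite, it suffices to bound $\E[\sup_{x\in\mathcal G}|\hat V_{\bf k}(x)-V_{\bf k}(x)|^2]$.

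\emph{Step 1 (grid).} Choose a grid $\mathcal G_\delta\subset\mathcal G$ of mesh $\delta = h\, n^{-1}$ (resp.\ $h\,m^{-1}$), so that $\#\mathcal G_\delta \lesssim (n/h)^d$. The map $z\mapsto K(z)z^{\bf k}$ is Lipschitz with support in $B_1(0)$. Hence $x\mapsto K_h(x-X_j)((x-X_j)/h)^{\bf k}$ is Lipschitz with constant $\lesssim h^{-d-1}$. It follows that both $\hat V_{\bf k}$ and $V_{\bf k}$ move by at most $\lesssim h^{-d-1}\delta = (n h^d)^{-1}$ between neighbouring points. This deterministic error is $o\big((\log n/(nh^d))^{1/2}\big)$.

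\emph{Step 2 (grid points).} At each grid point apply Bernstein's inequality. The summands are bounded by $h^{-d}$, and their variance is bounded by $\rho_2 h^{-d}$ by the computation behind Lemma \ref{lem:boundsterms}. This gives the tail bound of Lemma \ref{lem:boundsterms2} with $\min(\eta,\eta^2)$. A union bound then yields
\[
\Pr\Big(\max_{x\in\mathcal G_\delta}|\hat V_{\bf k}(x)-V_{\bf k}(x)|\ge \eta\Big)\le 2(n/h)^{d}\, C\exp\big(-\bar c_1 n h^d \min(\eta,\eta^2)\big).
\]

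\emph{Step 3 (integration).} Integrate the tail, $\E[Z^2]=\int_0^\infty 2\eta\Pr(Z\ge\eta)\,d\eta$. Split at $\eta_0 = (A\log n/(nh^d))^{1/2}$, which is $\le 1$ because $\log n/n=o(h^d)$. The integral over $[0,\eta_0]$ contributes at most $\eta_0^2$. For $\eta\ge\eta_0$, the factor $(n/h)^d$ is absorbed once $A$ is large, since $\log(n/h)\lesssim \log n$ under $\log n/(nh^d)\to 0$. For $\eta\ge 1$ the linear regime $\exp(-c\,nh^d\eta)$ is integrable, and its contribution is negligible. The resulting bound is $C_1\log n/(nh^d)$, and the same argument in $m$ gives the bound for $\hat S$.

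For \eqref{eq:unifempeigbound} I will again discretize. Note that $\lambda_{\min}$ is $1$-Lipschitz in operator norm. Each entry of $\hat S(x)$ is Lipschitz in $x$ with constant $\lesssim h^{-d-1}$ \emph{deterministically}. On a grid of mesh $\delta = c' h^{d+1}$, therefore, $\|\hat S(x)-\hat S(x')\|_{\op}\le \underline\lambda/4$ for neighbouring points. At each grid point I will apply a version of Lemma \ref{lem:tsybakov2} with threshold $3\underline\lambda/4$. The same matrix Chernoff argument gives $\Pr(\lambda_{\min}(\hat S(x))\le 3\underline\lambda/4)\le \#{\cal K}\exp(-c\,mh^d)$, with a modified $c$, uniformly in $x\in\mathcal G$. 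Here I use that $\underline\lambda$ is uniform over $\mathcal G$ under ${\cal F}(\mathcal G)$. The grid has $\lesssim \delta^{-d}\asymp h^{-d(d+1)}$ points, and the union bound gives the claim.

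The main obstacle is Step 3. One has to handle the union-bound factor correctly while both the sub-Gaussian and sub-exponential regimes of Bernstein are present, using exactly the condition $\log n/n=o(h^d)$. One also has to check that the grid size enters only through $\log(n/h)\lesssim\log n$.
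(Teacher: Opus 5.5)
Your proposal is correct. For the two bounds in \eqref{eq:stochboundunif} it is essentially the paper's argument. Both use an internal cover of polynomial cardinality and the deterministic $h^{-(d+1)}$-Lipschitz continuity inherited from the Lipschitz kernel. Both apply Bernstein's inequality with a union bound at the grid points and then integrate the tail. The cover size is absorbed because $\log n/n=o(h^d)$ forces $\log(1/h)\lesssim\log n$. The remaining differences are cosmetic: the paper takes mesh $(h^{d+2}/n)^{1/2}$ and truncates the tail integral using $|\hat V_{\bf k}|,|V_{\bf k}|\le h^{-d}$, while you take mesh $h/n$ and use the sub-exponential Bernstein regime for $\eta\ge 1$.

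For \eqref{eq:unifempeigbound} you use a different pointwise tool. You apply the matrix Chernoff bound of Lemma~\ref{lem:tsybakov2} with threshold $3\underline{\lambda}/4$ at each grid point, in a one-sided comparison that needs only the Lipschitz continuity of $x\mapsto\hat S(x)$. The paper deliberately avoids Chernoff here. It bounds the two-sided deviation $\sup_x|\lambda_{\min}(\hat S(x))-\lambda_{\min}(S(x))|$ via Weyl's inequality and the entrywise Bernstein bound of Lemma~\ref{lem:boundsterms2}, which requires Lipschitz control of both $\hat S$ and $S$.

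Your route never uses the upper bound $\rho_2$ on $g$, so it works for all $h\in(0,\rho_5)$ exactly as stated. The paper's appeal to Lemma~\ref{lem:boundsterms2} tacitly also needs $h<\rho_1$. Your route also gives an exponent linear rather than quadratic in $\underline{\lambda}$. The paper's route keeps scalar Bernstein as the only concentration tool and yields a uniform two-sided eigenvalue bound as a by-product. Both give $C_3h^{-d(d+1)}\exp(-c\,mh^d)$.
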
    
\begin{proof}[Proof of Theorem \ref{th:concinequ}]
Similarly as in the proof of Theorem \ref{T:1} we can write
\begin{equation}\label{eq:simplenontrimmed}
\hat r(x) - \bar r(x)  = {\bf e}_0^\top \, \hat S^{-1} \, \big(\hat V - V\big) +  \, {\bf e}_0^\top \, S^{-1} \, (S - \hat S) \, \hat S^{-1} \, V. 
\end{equation}

Therefore, using Lemmas \ref{lem:lowerboundeigen} and \ref{lem:tsybakov2} we obtain
\begin{align*}
& \, \Pr\Big(\big|\hat r(x) - \bar r(x)\big| \geq \delta \Big)\\ 
\, \leq \, & \Pr\big(\lambda_{\min}(\hat S) \leq \underline{\lambda}/2\big) +  \Pr\big(\lambda_{\min}(\hat S) > \underline{\lambda}/2, \,  \big|{\bf e}_0^\top \, \hat S^{-1} \, \big(\hat V - V\big) +  \, {\bf e}_0^\top \, S^{-1} \, (S - \hat S) \, \hat S^{-1} \, V\big| \geq \delta\big)\\
\, \leq \, & C\, \exp(-c\, m\, h^d) + \Pr\big(\|\hat V - V \|_2 \geq \underline{\lambda} \delta/4 \big) + \Pr\big(\|\hat S - S \|_{\mathrm{op}} \geq \underline{\lambda}^2 \delta/\big(4 \, \rho_2 \, \sqrt{\# {\cal K}}\big) \big) ,
\end{align*}
where for the last term we used the upper bound on $\|V \|_2$ from Lemma \ref{lem:boundsterms}. The proof is concluded by applying \eqref{eq:tailbounds}.  
\end{proof}

\begin{proof}[Proof of Theorem \ref{T:unifrate}]
    Suppose that $m$ is so large that $a_m \leq \underline{\lambda}/2$. Following the proof of Theorem \ref{T:1} leading up to \eqref{eq:boundingterms}, dropping $x$ from the notation we get
    \begin{align}
   \E\big[\sup_{{\cal G}} \big|\hat r^* - r\big|^2\big]  \leq &\,6\, \Big( \frac4{\underline{\lambda}^2}\, \E\big[ \sup_{{\cal G}} \big\|\hat V - V\big\|_2^2\big] + \frac4{\underline{\lambda}^4} \, \sup_{{\cal G}} \|V\|_2^2 \, \E\big[\sup_{{\cal G}} \big\| S - \hat S \big\|_F^2\big]  \nonumber \\ 
    & \, +     \Pr\big( \inf_{{\cal G}}\lambda_{\min}(\hat S) \leq \underline{\lambda}/2\big)\,\big( a_m^{-2} \, \E\big[\sup_{{\cal G}} \|\hat V\|_2^2\big]\, + \, 1  \, + \, \underline{\lambda}^{-2}\, \sup_{{\cal G}} \|V\|_2^2\big)\\
    & \, + \sup_{{\cal G}} \,\big| \bar r - r\big|^2\, \Big).\nonumber
\end{align}
    Then, since the bounds on the approximation error $\big| \bar r(x) - r(x)\big|$ from Proposition \ref{prop:bias} and on $\|V\|_2^2$ from Lemma \ref{lem:boundsterms} are uniform over $x \in {\cal G}$, it follows from Lemma \ref{lem:uniformbounds} that
    $$\E\big[\sup_{{\cal G}} \big|\hat r^* - r\big|^2\big] = {\cal O}\, \Big(   \frac{\log(1/h)}{\min(n,m) h^d}  + h^{- d\, (d+1)}\, a_m^{-2} \, \exp(-c\, m\, h^d) \, + h^{2 \beta}\,\Big),$$
    and the theorem follows by inserting the choice of $h$. 
\end{proof}

\section{Proofs for Section \ref{sec:asympnorm} on asymptotic normality}\label{sec:asympnormproofs}
We start with several lemmas before turning to the proofs of Theorem \ref{th:CLTgen} and of the results in Remark \ref{rem:derivest}. 
\begin{lemma}[Asymptotic normality]\label{lem:asympnormV}
     Impose Assumption \ref{ass:boundedness} and additionally assume  \eqref{eq:rpositive}. 
    \begin{enumerate}
        \item For the covariance matrix of $\hat V$, 
        $$ n\, h^d\, \cov\big( \hat V \big) = M - h^d\, V\, V^\top. $$
Further, setting $\bar \theta = S^{-1}\, V$, see Remark \ref{rem:localLS}, for the covariance matrix of $ \hat S \,  \bar \theta$, we have that
        $$ m\, h^d\, \cov\big( \hat S \,  \bar \theta \big) = N - h^d\, V\, V^\top. $$
        Here $M$ and $N$ are defined in \eqref{eq:defpopquantasympnormcov}.
        \item For some $\underline{\lambda}_M >0$ depending only on $\rho_3, \rho_4, l, d$ we have that $\lambda_{\min}(M) \geq \underline{\lambda}_M$ for all $h \in (0, \rho_5)$ and hence that $\lambda_{\min}(M - h^d\, V\, V^\top) \geq \underline{\lambda}_M/2$ for $h \leq \min\big(\rho_5, \underline{\lambda}_M/(2\,\# {\cal K}_l\, \rho_2^2)^{1/d}\big)$. 
        
        Similarly, for some $\underline{\lambda}_N >0$ depending on $\rho_2, \rho_3, \rho_4, d, l$ we have that $\lambda_{\min}(N) \geq \underline{\lambda}_N$ for $h \in (0, \min(\rho_5))$, 
        and hence  that $\lambda_{\min}(N - h^d\, V\, V^\top) \geq \underline{\lambda}_N/2$ for sufficiently small $h$. 
        \item Consider any sequence $u_{m,n} \in \R^{\# {\cal K}_l}$, $u_{m,n} \not=0$. 
        Then as $h \to 0$ and $\min(m,n)\, h^d \to \infty$ we have that
        \begin{equation}\label{eq:asmypnorm}
        \big(\min(m,n)\, h^d \big)^{1/2} \, \frac{u_{m,n}^\top \, \big(\hat V - V + (S - \hat S) \,  \bar \theta \big)}{\big(u_{m,n}^\top \, \Sigma_{m,n} \, u_{m,n} \big)^{1/2}}\, \stackrel{{\cal L}}{\to}\, {\cal N}(0, 1),
        \end{equation}
        where $\Sigma_{m,n}$ is defined in \eqref{eq:covmatrun}.
    \end{enumerate}
\end{lemma}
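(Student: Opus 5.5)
We prove the three parts of Lemma~\ref{lem:asympnormV} in order. Part 1 is an exact covariance computation, part 2 reduces to Lemma~\ref{lem:Slowereigagain}, and part 3 is a Lyapunov central limit theorem for a triangular array. For part 1, note that $\hat V=\frac1n\sum_i \xi_i$ with i.i.d.\ vectors $\xi_i=K_h(x-X_i)Q((x-X_i)/h)$. Then
\[
n\,\cov(\hat V)=\E[\xi_1\xi_1^\top]-VV^\top .
\]
Substituting $z=(x-X_1)/h$ gives $\E[\xi_1\xi_1^\top]=h^{-d}\int f(x-hz)K^2(z)Q(z)Q(z)^\top\dd z=h^{-d}M$, and multiplying by $h^d$ yields $M-h^dVV^\top$. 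Similarly, $\hat S\bar\theta=\frac1m\sum_j \zeta_j$ with
\[
\zeta_j=K_h(x-Y_j)\,Q\big(\tfrac{x-Y_j}h\big)\,Q\big(\tfrac{x-Y_j}h\big)^\top\bar\theta .
\]
Its mean is $S\bar\theta=V$, and its second moment is $h^{-d}N$ by the same substitution. This gives $N-h^dVV^\top$.

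For part 2, apply Lemma~\ref{lem:Slowereigagain} with kernel $K^2$ and density $f$. This is legitimate because \eqref{eq:Boundary1} holds, $K^2$ satisfies the kernel assumptions, and the degree is unchanged. It gives $\lambda_{\min}(M)\ge\underline\lambda_M$. Since $\|V\|_2^2\le\#\mathcal K_l\rho_2^2$ by Lemma~\ref{lem:boundsterms}, we have $h^d\|VV^\top\|_{\op}\le\underline\lambda_M/2$ for the stated range of $h$.

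For $N$, write $u^\top Nu=\int g(x-hy)K^2(y)\,|\bar\theta^\top Q(y)|^2(u^\top Q(y))^2\dd y$. The product $\bar\theta^\top Q\cdot u^\top Q$ is a polynomial of degree at most $2l$. Its coefficient vector is bounded below in norm by a constant times $\|u\|\,|\bar\theta_0|$, because the constant term is $\bar\theta_0u_0$; to obtain a bound for general $u$, one instead uses that the product of two polynomials has norm comparable to the product of the norms on the finite-dimensional space. Here $\bar\theta_0=\bar r(x)$, which is bounded below for small $h$: by Proposition~\ref{prop:bias} with $\beta$ small, or simply by continuity of $r$ at $x$ as $h\to0$, it is close to $r(x)>0$. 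We then apply Lemma~\ref{lem:Slowereigagain} at degree $2l$ with kernel $K^2$. A cleaner route is a compactness argument. The set $\{y\in B:\ g(x-hy)\ge\rho_3\}$ has measure at least $\rho_4$, a nonzero polynomial of bounded degree cannot be small on a set of positive measure, and the norms of $\bar\theta$ are controlled by $\underline\lambda^{-1}\rho_2\sqrt{\#\mathcal K}$ from above and by $\bar\theta_0$ from below. I expect this lower bound on $N$, uniform in $h$, to be the main technical obstacle. It is the only place where $r>0$ enters, since $r>0$ keeps $\bar\theta$ away from $0$.

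For part 3, write
\[
u^\top\big(\hat V-V+(S-\hat S)\bar\theta\big)=\frac1n\sum_i u^\top(\xi_i-V)-\frac1m\sum_j u^\top(\zeta_j-V),
\]
a sum of independent centered terms from the two independent samples. By part 1, its variance is exactly $(\min(m,n)h^d)^{-1}u^\top\Sigma_{m,n}u$. By part 2, $u^\top\Sigma_{m,n}u\ge c\|u\|^2$, since at least one of the weights $\min(m,n)/n$ and $\min(m,n)/m$ equals $1$ and both are nonnegative. We verify Lyapunov's condition with third moments. The summands are bounded by $C\|u\|h^{-d}$, because $K\le1$, $Q$ is bounded on $B$, and $\|\bar\theta\|$ is bounded. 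Their second moments are of order $\|u\|^2h^{-d}$. Hence the Lyapunov ratio for the normalized sum is $O\big((\min(m,n)h^d)^{-1/2}\big)\to0$. The normalization by $u_{m,n}$ is scale-invariant, so a varying $u_{m,n}$ causes no trouble.
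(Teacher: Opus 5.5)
\paragraph{Gap in the lower bound for $N$.} Most of your proposal matches the paper: part 1, part 3, the bound for $M$, and, for $N$, the product-of-polynomials norm comparison together with the degree-$2l$ application of Lemma~\ref{lem:Slowereigagain}. The gap is in bounding $\|\bar\theta\|$ from below. You use $\|\bar\theta\|\ge|\bar\theta_0|=|\bar r(x)|$ and argue that $\bar r(x)$ is close to $r(x)>0$. This does not work as stated, for two reasons.
\begin{itemize}
\item Proposition~\ref{prop:bias} is not available. The lemma assumes only continuity of $r$, not the H\"older condition \eqref{eq:Hoelder} for any $\beta>0$.
\item The continuity route gives $\bar r(x)\to r(x)$, but only for $h$ below a threshold that depends on the modulus of continuity of $r$ at $x$. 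For moderate $h$ and $l\ge 1$, $\bar r(x)=\int\bar K(z)\,g(x-hz)\,r(x-hz)\,\dd z$ is an average of $r$ against a signed equivalent kernel. Nothing in the assumptions keeps it away from $0$ by constants depending only on the $\rho_j$.
\end{itemize}
So your argument does not prove the claim in item 2 that $\lambda_{\min}(N)\ge\underline\lambda_N$ for all $h\in(0,\rho_5)$, with $\underline\lambda_N$ depending only on $\rho_2,\rho_3,\rho_4,d,l$. It gives only an $r$-dependent statement for sufficiently small $h$. That weaker version would still be enough for item 3 and Theorem~\ref{th:CLTgen}, where $h\to0$ for fixed $(f,g)$, but not for the lemma as written.

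\paragraph{The fix.} Bound the whole vector $\bar\theta$ rather than its first coordinate. Since $V=S\bar\theta$, you get $\|\bar\theta\|\ge\|V\|_2/\|S\|_{\op}\ge V_0/\|S\|_F$. Here $\|S\|_F\le\#\mathcal K_l\,\rho_2$ by Lemma~\ref{lem:boundsterms}. Also $V_0=\int K(z)f(x-hz)\,\dd z\ge\underline\lambda_3>0$, by Lemma~\ref{lem:Slowereigagain} applied with kernel $K$, density $f$ and degree $0$, which is allowed because of \eqref{eq:Boundary1}. Both bounds hold for every $h\in(0,\rho_5)$ with constants depending only on the $\rho_j$, $d$ and $l$. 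Combined with your estimate $\kappa\|\bar\theta\|\,\|u\|$ for the coefficient vector of $\bar\theta^\top Q\cdot u^\top Q$, this gives $\lambda_{\min}(N)\ge\kappa^2\|\bar\theta\|^2\underline\lambda_2$ uniformly.

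So $r>0$ keeps $\bar\theta$ away from $0$ through \eqref{eq:Boundary1}, which you already use for $M$, not through the value $r(x)$.

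A minor point: your first justification of the product bound, via the constant term $\bar\theta_0u_0$, fails when $u_0=0$. The norm-comparability argument you switch to is the correct one.
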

\begin{proof}[Proof of Lemma \ref{lem:asympnormV}]
    1.~: Straightforward computation

    2.~For $M$: Apply Lemma \ref{lem:Slowereigagain} with kernel $K^2$, degree $l$ and density $f$, which satisfies \eqref{eq:Boundary1}. 

    For $N$: Note that given $\alpha \in \R^{\# {\cal K}_l}$ we have that   
    \begin{equation}\label{eq:lowerboundminNstep1}
    \alpha^\top \, N  \, \alpha = \int g(x-h\,y) \, K^2(y) \, |\bar \theta^\top \, Q(y)\, \alpha^\top Q(y)|^2\,  \dd y.
    \end{equation}
    First apply Lemma \ref{lem:Slowereigagain} with kernel $K^2$, degree $2\, l$ and density $g$, resulting in the lower bound $\underline{\lambda}_2$. 
    The map $\Phi: \R^{\# {\cal K}_l} \times \R^{\# {\cal K}_l} \to \R^{\# {\cal K}_{2\,l}}$, which maps $(\theta,\alpha)$ to the coefficient vector of the polynomial $\theta^\top \, Q(y)\, \alpha^\top Q(y)$ of degree at most $2\, l$, is bilinear and in particular continuous, and $\Phi(\theta, \alpha) \not=0$ if both $\theta \not=0$ and $\alpha \not=0$. Hence by compactness and continuity, 
    $$ \kappa = \inf_{\|\theta\|=1,\, \|\alpha\|=1}\, \|\Phi(\theta, \alpha) \|>0,$$
    and bilinearity yields
    \begin{equation}\label{eq:blinearlowerbound}
        \|\Phi(\theta, \alpha) \| \, \geq \, \kappa \, \|\theta\| \, \|\alpha\|, \qquad \theta, \alpha \in \R^{\# {\cal K}_l}.
    \end{equation}
    Finally, from $V = S \, \bar \theta$ it follows that 
    $$ \|\bar \theta\|\, \geq \, \frac{\|V\|}{\|S\|_{\op}} \, \geq \, \frac{V_0}{\|S\|_{F}} \geq \frac{\underline{\lambda}_3}{\# {\cal K}\, \rho_2} >0$$
     where in the last inequality we used Lemma \ref{lem:boundsterms}, and to $V_0 = \int K(z)\, f(x - hz)\, \dd z$ we apply Lemma \ref{lem:Slowereigagain} with kernel $K$, density $f$ and degree $=0$ to yield $V_0 \geq \underline{\lambda}_3 >0$. 

    Overall, this together with  \eqref{eq:lowerboundminNstep1}, the choice of $\underline{\lambda}_2$ and  \eqref{eq:blinearlowerbound} yields for $h \in (0, \rho_5)$ that
    $$ \lambda_{\min}(N) \geq \kappa^2\, \|\bar \theta\|^2\, \underline{\lambda}_2.$$
    
    3. We intend to use the Lyapounov central limit theorem. Consider the pooled triangular array
    \begin{align*}
        \eta_{n,i} & \, = \, \frac{\big(\min(m,n)\, h^d \big)^{1/2}}{n}\, u_{m,n}^\top\, \Big(K_h(x-X_i)\, Q\big((x-X_i)/h\big) - V \Big)\, ,\qquad i=1, \ldots, n\, ,\\
                \eta_{n,n+j} & \, = \, - \, \frac{\big(\min(m,n)\, h^d \big)^{1/2}}{m}\, u_{m,n}^\top\, \Big(K_h(x-Y_j)\, Q\Big(\frac{x-Y_j}{h}\Big)\,Q\Big(\frac{x-Y_j}{h}\Big)^\top\, \bar \theta  - V \Big)\, ,\qquad j=1, \ldots, m\,,
    \end{align*}
    which are centered, row-wise independent random variables for which
    \begin{align}
        \sum_{i=1}^{n+m} \, \eta_{n,i} & \,=\, \big(\min(m,n)\, h^d \big)^{1/2}\, u_{m,n}^\top\,\big(\hat V - V + (S - \hat S) \,  \bar \theta \big), \nonumber\\
        \sum_{i=1}^{n+m} \, \var(\eta_{n,i}) & \,=\, u_{m,n}^\top\, \Sigma_{m,n}\, u_{m,n} \geq \|u_{m,n}\|^2\, \min(\underline{\lambda}_M,\underline{\lambda}_N)/2\,.\label{eq:lowerboundthevarlem}
    \end{align}
    To check the Lyapounov condition we upper-bound
    \begin{align*}
        \E\big[|\eta_{n,1}|^3 \big]\, & \leq \, \frac{\big(\min(m,n)\, h^d \big)^{3/2}}{n^3}\,  \|u_{m,n}\|^3\, \cdot 4\, \Big(\E \big\|K_h(x-X_1)\, Q\big((x-X_1)/h\big) \big\|^3 \ + \, \big\|V \big\|^3\Big)\\
        & \, \leq \, \frac{\big(\min(m,n)\, h^d \big)^{3/2}}{n^3}\,  \|u_{m,n}\|^3\, \big(h^{-2d}\, (\# {\cal K})^{3/2}\, \rho_2 + (\# {\cal K})^{3/2}\,\rho_2^3 \big)\\
        & \, \leq \, C\, \min(m,n)^{3/2}\, h^{-d/2}\, n^{-3}\, \|u_{m,n}\|^3\,
    \end{align*}
    and similarly 
    \begin{align*}
        \E\big[|\eta_{n,n+1}|^3 \big]\, & \, \leq \, \tilde C\, \min(m,n)^{3/2}\, h^{-d/2}\, m^{-3}\, \|u_{m,n}\|^3\,.
    \end{align*}
    Therefore together with \eqref{eq:lowerboundthevarlem},
    $$ \frac{\sum_{i=1}^{n+m} \, \E\big[|\eta_{n,i}|^3 \big]}{\big(\sum_{i=1}^{n+m} \, \var(\eta_{n,i})\big)^{3/2}} = \mathcal O\Big( \big( h^d\, \min(m,n)\big)^{-1/2}\Big) = o(1),$$
    that is, the Lyapounov condition. 
\end{proof}
\begin{remark}[Estimate of the covariance matrix]\label{rem:estcovclt}
    As estimators of $M$ and $N$ in \eqref{eq:defpopquantasympnormcov} consider
    \begin{align*}
    \hat M  & \,=\, \frac{1}{n}\sum_{i=1}^n (K^2)_h\big(x-X_i\big) \, Q\Big(\frac{x-X_i}{h}\Big)\,Q\Big(\frac{x-X_i}{h}\Big)^\top\,,\\  
    \hat N  & \,=\, \frac{1}{m}\sum_{j=1}^m (K^2)_h\big(x-Y_j\big) \, \Big|\hat\theta^\top \, Q\Big(\frac{x-Y_j}{h}\Big)\Big|^2\, Q\Big(\frac{x-Y_j}{h}\Big)\,Q\Big(\frac{x-Y_j}{h}\Big)^\top\,,
    \end{align*}
    with $\hat \theta = \hat S^{-1}\, \hat V$ as in Remark \ref{rem:localLS}, resulting in the estimator 
    $$\hat \Sigma_{m,n} = \frac{\min(m,n)}n \,(\hat M - h^d\, \hat V\, \hat V^\top) \, + \, \frac{\min(m,n)}m \, (\hat N - \, h^d\, \hat V\, \hat V^\top).$$
    of $\Sigma_{m,n}$.
\end{remark}
\begin{lemma}\label{lem:covmatconsist}
    Under Assumption \ref{ass:boundedness}, as $h \to 0$ and $\min(m,n)\, h^d \to \infty$ we have that
    \begin{equation*}
        \hat M - M = {\cal O}_\Pr \big((n\, h^d)^{-1/2}\big),\qquad \hat N - N = {\cal O}_\Pr \big((\min(m,n)\, h^d)^{-1/2}\big)
    \end{equation*}
    and hence
    \begin{equation}\label{eq:estcov}
         \hat \Sigma_{m,n} - \Sigma_{m,n} = {\cal O}_\Pr \big((\min(m,n)\, h^d)^{-1/2}\big).
    \end{equation}

\end{lemma}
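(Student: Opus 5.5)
The plan is to treat $\hat M$ and $\hat N$ separately and then combine them. The $\hat V\hat V^\top$ terms are handled with Lemma~\ref{lem:boundsterms}.

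\emph{Bound for $\hat M$.} By construction $\hat M$ is an average of i.i.d.\ matrices with $\E[\hat M]=M$, since $(K^2)_h(x-z)=K^2((x-z)/h)/h^d$ and the substitution $z=x-hy$ gives exactly $M$. Each entry is bounded by $h^{-d}$ on the support, because $K\le1$ and $\|y\|\le 1$ there. Hence the entrywise variance is at most
\[
\frac1n\int h^{-2d}K^4\big((x-z)/h\big) f(z)\,\dd z\le \frac{\rho_2\,\lambda^d(B)}{n h^d},
\]
and Chebyshev's inequality gives $\hat M-M={\cal O}_\Pr((nh^d)^{-1/2})$.

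\emph{Bound for $\hat N$.} Write $W_j=(K^2)_h(x-Y_j)\,Q_jQ_j^\top$, where $Q_j=Q((x-Y_j)/h)$. Then $\hat N$ is a quadratic form in $\hat\theta$ with random matrix coefficients, $\hat N=\frac1m\sum_j W_j\,(\hat\theta^\top Q_j)^2$. Define $\tilde N$ by the same formula with $\bar\theta$ in place of $\hat\theta$. The argument used for $\hat M$ then gives $\tilde N-N={\cal O}_\Pr((mh^d)^{-1/2})$, using that $\|\bar\theta\|\le \|V\|/\underline\lambda$ is bounded by Lemmas~\ref{lem:lowerboundeigen} and \ref{lem:boundsterms}. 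For the remaining difference we have
\[
(\hat\theta^\top Q_j)^2-(\bar\theta^\top Q_j)^2=\big((\hat\theta-\bar\theta)^\top Q_j\big)\big((\hat\theta+\bar\theta)^\top Q_j\big),
\]
and $\|Q_j\|\le(\#{\cal K})^{1/2}$ on the support. This gives
\[
\|\hat N-\tilde N\|_F\le C\,\|\hat\theta-\bar\theta\|\,\big(\|\hat\theta\|+\|\bar\theta\|\big)\,\frac1m\sum_j (K^2)_h(x-Y_j).
\]
The last average is ${\cal O}_\Pr(1)$ because its mean is bounded by $\rho_2\lambda^d(B)$.

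\emph{Rate for $\hat\theta-\bar\theta$ (the main step).} On the event $\lambda_{\min}(\hat S)>\underline\lambda/2$, whose complement has probability tending to zero by Lemma~\ref{lem:tsybakov2} since $mh^d\to\infty$, the decomposition \eqref{eq:simplenontrimmed} with general ${\bf e}_{\bf k}$ gives
\[
\|\hat\theta-\bar\theta\|\le \frac{2}{\underline\lambda}\|\hat V-V\|+\frac{2}{\underline\lambda^2}\|S-\hat S\|_F\,\|V\|.
\]
By Lemma~\ref{lem:boundsterms} and Markov's inequality this is ${\cal O}_\Pr((nh^d)^{-1/2}+(mh^d)^{-1/2})={\cal O}_\Pr((\min(m,n)h^d)^{-1/2})$, and consequently $\|\hat\theta\|={\cal O}_\Pr(1)$. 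Combining the pieces yields $\hat N-N={\cal O}_\Pr((\min(m,n)h^d)^{-1/2})$. This is the only place where the $n$-sample enters the $\hat N$ bound, which explains the $\min(m,n)$ in the rate.

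\emph{Assembling $\hat\Sigma_{m,n}$.} The prefactors $\min(m,n)/n$ and $\min(m,n)/m$ are at most $1$. Moreover
\[
h^d\|\hat V\hat V^\top-VV^\top\|\le h^d\|\hat V-V\|\big(\|\hat V\|+\|V\|\big)={\cal O}_\Pr\big(h^d(nh^d)^{-1/2}\big),
\]
which is of smaller order. Therefore \eqref{eq:estcov} follows by the triangle inequality.
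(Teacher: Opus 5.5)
Your proof is correct and takes essentially the same route as the paper. You introduce $\tilde N$ (the paper's $\hat N(\bar\theta)$), bound $\hat N-\tilde N$ by $\|\hat\theta-\bar\theta\|\,(\|\hat\theta\|+\|\bar\theta\|)$ times an ${\cal O}_\Pr(1)$ factor, and handle $\hat M$ and the $\hat V\hat V^\top$ terms via Lemma~\ref{lem:boundsterms}; along the way you fill in details the paper leaves implicit, namely the variance computations and the rate for $\hat\theta-\bar\theta$ on the event $\{\lambda_{\min}(\hat S)>\underline{\lambda}/2\}$ from Lemma~\ref{lem:tsybakov2}.
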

    \begin{proof}[Proof of Lemma \ref{lem:covmatconsist}]
        The proof for $\hat M - M$ is similar to that for $\hat S - S$ in Lemma \ref{lem:boundsterms} and omitted. For the second claim write
        $$\hat N - N = \big(\hat N - \hat N (\bar \theta) \big) + \big(\hat N (\bar \theta) -  N \big),$$
        with $\hat N (\bar \theta)$ defined as $\hat N$ but with $\hat \theta$ replaced by $\bar \theta = S^{-1} V$. Now $\|\bar \theta\|$ is uniformly bounded for small $h$, see Lemmas \ref{lem:lowerboundeigen} and \ref{lem:boundsterms}, and $\hat N(\bar \theta) - N = {\cal O}_\Pr \big((m\, h^d)^{-1/2}\big)$ again follows as for $\hat S - S$. Furthermore, 
        $$ \|\hat N - \hat N (\bar \theta)\|_F \, = \, \big(\|\hat \theta - \bar \theta\|\, (\|\hat \theta \| + \|\bar \theta\|) \big)\, {\cal O}_\Pr (1) =  {\cal O}_\Pr\big((\min(m,n)\, h^d)^{-1/2} \big)$$
        upon using Lemma \ref{lem:boundsterms}, which then implies the claim for $\hat N$. Finally, since
        $$ \|\hat V\, \hat V^\top - V\, V^\top\|_{\op} \, \leq \, \big(\|\hat V\|_2 + \|V\|_2 \big)\,\big(\|\hat V\, - \,V\|_2 \big) \, = \, {\cal O}_\Pr\big((n\, h^d)^{-1/2} \big)$$
        by using Lemma \ref{lem:boundsterms}, \eqref{eq:estcov} also follows. 
    \end{proof}
    \begin{lemma}\label{lem:estempposdef}
       Given $\alpha \in \R^{\# {\cal K}_l}$ let
       \begin{align*}
           A_i & \, = \, K_h(x-X_i)\, \alpha^\top\,  Q\Big(\frac{x-X_i}{h}\Big)\, ,\\
           B_j & \, = \, K_h(x-Y_j)\, \alpha^\top\,  Q\Big(\frac{x-Y_j}{h}\Big) \, \hat \theta^\top\,  Q\Big(\frac{x-Y_j}{h}\Big)\, .
       \end{align*}
     Then
    \begin{equation}
         \bar A_n \, := \, \frac1n \, \sum_{i=1}^n A_i \, = \, \frac1m \, \sum_{j=1}^m B_j \, := \, \bar B_m  \, = \, \alpha^\top \hat V, 
     \end{equation}
    and for $\alpha = \hat S^{-1}\, {\bf e}_0$ these averages equal $\hat r(x)$. Further, 
    \begin{equation}
         \alpha^\top \, \hat \Sigma_{m,n} \, \alpha \, = \, \min(m,n)\, h^d\,\Big(\frac1{n^2}\, \sum_{i=1}^n \big(A_i - \bar A_n \big)^2 \, + \, \frac1{m^2}\, \sum_{j=1}^m \big(B_j - \bar B_m \big)^2 \Big), 
    \end{equation}     
    in particular, the matrix $\hat \Sigma_{m,n}$ is positive semidefinite. 
    \end{lemma}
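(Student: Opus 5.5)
The plan is a direct algebraic verification: compute the two sample means, then expand the quadratic form $\alpha^\top \hat\Sigma_{m,n}\alpha$ term by term.

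\emph{The means.} By linearity and the definition of $\hat V$ in \eqref{eq:estdensrat},
\[
\bar A_n = \alpha^\top \frac1n\sum_{i=1}^n K_h(x-X_i)\,Q\big((x-X_i)/h\big) = \alpha^\top \hat V .
\]
For $\bar B_m$, each summand is $K_h(x-Y_j)\,\alpha^\top Q Q^\top \hat\theta$ with $Q = Q((x-Y_j)/h)$. Averaging over $j$ and using the definition of $\hat S$ in \eqref{eq:gramemp} gives
\[
\bar B_m = \alpha^\top \hat S\,\hat\theta = \alpha^\top \hat S\,\hat S^{-1}\hat V = \alpha^\top\hat V .
\]
Here I assume $\hat S$ is invertible, which is implicit since $\hat\theta$ is defined. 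For $\alpha=\hat S^{-1}{\bf e}_0$, symmetry of $\hat S$ gives $\alpha^\top\hat V={\bf e}_0^\top\hat S^{-1}\hat V=\hat r(x)$ by \eqref{eq:esmatvec}.

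\emph{The variance identity.} Treat the two blocks of $\hat\Sigma_{m,n}$ from Remark \ref{rem:estcovclt} separately.

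For the first block, note that $(K^2)_h(z)=K^2(z/h)/h^d = h^d\,K_h(z)^2$. Hence
\[
\alpha^\top \hat M\alpha = \frac{h^d}{n}\sum_{i=1}^n A_i^2
\qquad\text{and}\qquad
h^d\,\alpha^\top\hat V\hat V^\top\alpha = h^d\,\bar A_n^2 .
\]
Combining these,
\[
\frac{\min(m,n)}{n}\,\alpha^\top(\hat M-h^d\hat V\hat V^\top)\alpha = \min(m,n)\,h^d\,\frac1{n^2}\sum_{i=1}^n (A_i-\bar A_n)^2 ,
\]
using the identity $\frac1n\sum A_i^2-\bar A_n^2=\frac1n\sum(A_i-\bar A_n)^2$.

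For the second block, $\alpha^\top\hat N\alpha=\frac{h^d}{m}\sum_j K_h(x-Y_j)^2\,|\hat\theta^\top Q|^2\,|\alpha^\top Q|^2=\frac{h^d}{m}\sum_j B_j^2$. Because $\bar B_m=\alpha^\top\hat V$, the centring term $h^d(\alpha^\top\hat V)^2$ equals $h^d\bar B_m^2$. The same identity then yields the $B$-part of the formula.

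\emph{Positive semidefiniteness.} The right-hand side is a sum of squares, so it is nonnegative for every $\alpha$. This gives positive semidefiniteness of $\hat\Sigma_{m,n}$.

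The only delicate step is the centring in the $N$-block. It works precisely because $\bar B_m$ coincides with $\alpha^\top \hat V$, which is why $\hat\theta=\hat S^{-1}\hat V$, rather than $\bar\theta$, is used in $\hat N$ and in $B_j$. The rest is bookkeeping with the scaling $(K^2)_h=h^dK_h^2$.
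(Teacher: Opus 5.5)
Your proposal is correct and is exactly the direct computation the paper has in mind; the paper only states ``This follows by direct computation.'' You identify the means through the definitions of $\hat V$, $\hat S$ and $\hat\theta=\hat S^{-1}\hat V$, and the quadratic form through $(K^2)_h=h^d K_h^2$ together with the centring identity, which in the $\hat N$-block works because $\bar B_m=\alpha^\top\hat V$.
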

This follows by direct computation.

\begin{proof}[Proof of Theorem \ref{th:CLTgen}]
 From \eqref{eq:simplenontrimmed}, on the event $\{\lambda_{\min}(\hat S) >0\}$, for which $\Pr(\lambda_{\min}(\hat S) >0) \to 1$,
\begin{align}
\hat r(x) - \bar r(x) & = {\bf e}_0^\top \, S^{-1} \, \big(\hat V - V\big) + {\bf e}_0^\top S^{-1} \, (S - \hat S) \, \hat S^{-1} \, \big(\hat V - V\big) \nonumber \\
& \, + \, \, {\bf e}_0^\top \, S^{-1} \, (S - \hat S) \,  S^{-1} \, V + \, \, {\bf e}_0^\top \, S^{-1} \, (S - \hat S) \, S^{-1} \, (S - \hat S) \, \hat S^{-1} \, \, V \nonumber \\ 
& = {\bf e}_0^\top \, S^{-1} \, \big(\hat V - V\big)  \, + \, {\bf e}_0^\top \, S^{-1} \, (S - \hat S) \,  S^{-1} \, V + {\cal O}_{\Pr}\Big(\frac1{\sqrt{m\,\min(m,n)}\, h^d} \Big),\label{eq:expandasymp}
\end{align}
The rate for the remainder term follows from Lemma \ref{lem:boundsterms}, and where
 $\|\hat S^{-1}\|_{\op} = 1/\lambda_{\min}(\hat S) = \mathcal O_{\Pr}(1)$ since $|\lambda_{\min}(\hat S) - \lambda_{\min}(S) | \leq \|\hat S - S\|_{\op} = o_{\Pr}(1)$ and $\lambda_{\min}(S) \geq \underline{\lambda} >0$ for all $h \in (0, \rho_5)$.

    Now, for the first statement \eqref{eq:CLTstand} of the theorem, consider the expansion  \eqref{eq:expandasymp}, and note that 
    $$ \sigma_{m,n}^2 \, = \, {\bf e}_0^\top \,\,S^{-1} \,\Sigma_{m,n} \, S^{-1}\, {\bf e}_0 \, \geq \, \|S^{-1}\, {\bf e}_0\|^2\, \min(\underline{\lambda}_M,\underline{\lambda}_N)/2 \, \geq \, \frac{\min(\underline{\lambda}_M,\underline{\lambda}_N)}{2\,(\# {\cal K})^2\, \rho_2^2 }$$
    using Lemma \ref{lem:asympnormV} and \eqref{eq:stochbound1}, so that the remainder in \eqref{eq:expandasymp} after normalization from \eqref{eq:CLTstand} is  still ${\cal O}_{\Pr}\big(1/(\sqrt{m\,h^d}) \big) = o_{\Pr}(1)$. Then apply Lemma \ref{lem:asympnormV}, 3.~with $u_{m,n} = S^{-1}\, {\bf e}_0$ to conclude  \eqref{eq:CLTstand}. 
    
    For the second statement, from Lemma \ref{lem:estempposdef} we obtain that 
    $$ \hat \sigma_{m,n}^2 \, = \, {\bf e}_0^\top \,\,\hat S^{-1} \,\hat \Sigma_{m,n} \, \hat S^{-1}\, {\bf e}_0.$$
    The second claim then follows 
from Slutsky's lemma, since  $\hat S^{-1} \,\hat \Sigma_{m,n} \, \hat S^{-1} - S^{-1} \,\Sigma_{m,n} \, S^{-1} = o_{\Pr}(1)$ , see Lemmas \ref{lem:covmatconsist} and \ref{lem:boundsterms},  and since the minimal eigenvalue of $S^{-1} \,\Sigma_{m,n} \, S^{-1}$ is bounded away from $0$. 
\end{proof}

Let's conclude the section by formulating a multivariate CLT for derivatives. Let $\cI \subseteq \cK_l$ and let $P_{\cI}$ denote the $\# \cK_l \times \# \cI$ matrix with columns $\bbe_{\bbk}$, $\bbk \in \cI$. Further set
\begin{equation}\label{eq:scalemultiderivative}
D_{\cI}\, = \, \text{diag}\big(\bbk! /(-h)^{|\bbk|}\big)_{\bbk \in \cI},\quad
\hat r_{\cI}(x) \, = \, D_{\cI}\, P_{\cI}^\top\, \hat\theta  = \big(\hat r_{\bbk}(x)\big)_{\bbk\in{\cI}},\quad
\bar r_{\cI}(x) \, = \, D_{\cI}\, P_{\cI}^\top\, \bar\theta  = \big(\bar r_{\bbk}(x)\big)_{\bbk\in{\cI}}    
\end{equation}
where as before $\hat \theta = \hat S^{-1} \hat V$, $\bar \theta =  S^{-1} V$, as well as 
\begin{equation*}
\Gamma_{\cI} \, = \, P_{\cI}^\top S^{-1}\, \Sigma_{m,n}\, S^{-1}\, P_{\cI}\,,\qquad
\hat\Gamma_{\cI} \, = \, P_{\cI}^\top \hat S^{-1}\, \hat \Sigma_{m,n}\, \hat S^{-1}\, P_{\cI} \, .   
\end{equation*}

\begin{theorem}[Multivariate derivative CLT]\label{th:CLTmultvarder}
Let $\cI \subseteq \cK_l$, set $q= \# \cI$, let $P_{\cI}$ denote the $\# \cK_l \times \# \cI$ matrix with columns $\bbe_{\bbk}$, $\bbk \in \cI$ and let $D_{\cI}$ as in \eqref{eq:scalemultiderivative}.     Impose Assumption \ref{ass:boundedness} and additionally assume  \eqref{eq:rpositive}. Then for sufficiently small $h$, $\Gamma_{\cal I}$ is invertible with minimal eigenvalue bounded away from zero, and as $h \to 0$ and $\min(m,n)\, h^d \to \infty$ we have that
    \begin{equation}\label{eq:CLTder}
      \big(\min(m,n)\, h^d\big)^{1/2}\ \Gamma_{\cal I}^{-1/2}\, D_{\cal I}^{-1}\,
      \big(\hat r_{\cal I}(x) - \bar r_{\cal I}(x)\big)
      \ \stackrel{{\cal L}}{\to}\ {\cal N}_q\big(0, \mathrm I_q\big).
    \end{equation}
    The statement remains true with $\Gamma_{\cI}$ replaced by $\hat\Gamma_{\cI}$.
\end{theorem}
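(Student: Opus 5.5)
The plan is to reduce the multivariate statement to the scalar machinery already developed for Theorem \ref{th:CLTgen}, via the Cramér--Wold device. Since $D_{\cI}^{-1}(\hat r_{\cI}(x) - \bar r_{\cI}(x)) = P_{\cI}^\top(\hat\theta - \bar\theta)$, the scaling matrix $D_{\cI}$ drops out completely. The problem therefore becomes a CLT for the subvector $P_{\cI}^\top(\hat\theta-\bar\theta)$ with covariance matrix $\Gamma_{\cI}$.

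\textbf{Step 1: invertibility of $\Gamma_{\cI}$.} By Lemma \ref{lem:asympnormV}, part 2, for sufficiently small $h$ we have $\lambda_{\min}(\Sigma_{m,n}) \geq \min(\underline{\lambda}_M,\underline{\lambda}_N)/2$. This holds because both weights $\min(m,n)/n$ and $\min(m,n)/m$ are at most $1$, and at least one of them equals $1$. Hence for $w\in\R^q$,
\[
w^\top \Gamma_{\cI} w \,\geq\, \tfrac12\min(\underline{\lambda}_M,\underline{\lambda}_N)\, \|S^{-1}P_{\cI}w\|^2 \,\geq\, \tfrac12\min(\underline{\lambda}_M,\underline{\lambda}_N)\,\|w\|^2/\|S\|_{\op}^2 .
\]
By Lemma \ref{lem:boundsterms}, $\|S\|_{\op}\le \#\cK\,\rho_2$, so $\lambda_{\min}(\Gamma_{\cI})$ is bounded away from zero. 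Its entries are also bounded above, using $\lambda_{\min}(S)\ge\underline{\lambda}$ and the boundedness of $M$, $N$ and $h^d VV^\top$. This in turn needs $\|\bar\theta\|\le \underline{\lambda}^{-1}\|V\|$ to be bounded.

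\textbf{Step 2: linearization.} Apply the expansion \eqref{eq:expandasymp} with ${\bf e}_0^\top$ replaced by $P_{\cI}^\top$. This is pure algebra on the event $\{\lambda_{\min}(\hat S)>0\}$, whose probability tends to one. It gives
\[
P_{\cI}^\top(\hat\theta-\bar\theta) = P_{\cI}^\top S^{-1}\big(\hat V - V + (S-\hat S)\bar\theta\big) + \mathcal O_\Pr\big((m\min(m,n))^{-1/2}h^{-d}\big),
\]
and after multiplication by $(\min(m,n)h^d)^{1/2}$ the remainder is $o_\Pr(1)$.

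\textbf{Step 3: Cramér--Wold.} Fix $w\in\R^q$ with $\|w\|=1$ and set $v_{m,n} = \Gamma_{\cI}^{-1/2}w$, which is bounded by Step 1. Put $u_{m,n} = S^{-1}P_{\cI}v_{m,n} \neq 0$. Then $u_{m,n}^\top\Sigma_{m,n}u_{m,n} = v_{m,n}^\top\Gamma_{\cI}v_{m,n} = 1$, so Lemma \ref{lem:asympnormV}, part 3, yields directly that $w^\top$ applied to the normalized leading term converges to $\mathcal N(0,1)$. The lemma is stated for arbitrary nonzero sequences $u_{m,n}$, which is exactly what makes it usable with $h$-dependent directions here.

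The remainder stays negligible after premultiplying by $\Gamma_{\cI}^{-1/2}$, because $\|\Gamma_{\cI}^{-1/2}\|_{\op}$ is bounded. The main subtlety is that $\Gamma_{\cI}$ depends on $(m,n,h)$ and need not converge, so a fixed-limit Cramér--Wold argument does not apply verbatim. I would handle this by the subsequence argument: every subsequence has a further subsequence along which the vectors $w^\top\Gamma_{\cI}^{-1/2}$ converge, and the uniform statement above holds along it. Alternatively, since the scalar CLT holds for every $w$ uniformly in this normalized form, the characteristic functions converge to $\exp(-\|w\|^2/2)$, which gives \eqref{eq:CLTder}.

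\textbf{Step 4: studentization.} By Lemma \ref{lem:covmatconsist}, $\hat\Sigma_{m,n}-\Sigma_{m,n}=o_\Pr(1)$, and $\hat S^{-1}-S^{-1} = o_\Pr(1)$ by Lemma \ref{lem:boundsterms} together with $\lambda_{\min}(S)\ge\underline\lambda$. Hence $\hat\Gamma_{\cI}-\Gamma_{\cI}=o_\Pr(1)$. Since the eigenvalues of $\Gamma_{\cI}$ lie in a fixed compact subset of $(0,\infty)$, continuity of $A\mapsto A^{-1/2}$ on such matrices gives $\hat\Gamma_{\cI}^{-1/2}\Gamma_{\cI}^{1/2}\to \mathrm I_q$ in probability. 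Slutsky's lemma then completes the proof.
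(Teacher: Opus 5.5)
Your proposal is correct and follows the paper's proof essentially step for step. It uses the same bound $\lambda_{\min}(\Gamma_{\cI})\ge\lambda_{\min}(\Sigma_{m,n})/\|S\|_{\op}^2$, the same linearization, Cram\'er--Wold with $u_{m,n}=S^{-1}P_{\cI}\Gamma_{\cI}^{-1/2}w$ fed into Lemma~\ref{lem:asympnormV}, part 3, and Slutsky for the studentized version.

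The subsequence argument you worry about is not needed. Folding $\Gamma_{\cI}^{-1/2}$ into $u_{m,n}$ already gives $w^\top Z_{m,n}\to\mathcal N(0,\|w\|^2)$ for every fixed $w$, where $Z_{m,n}$ is the normalized leading term, and that is exactly the hypothesis of the Cram\'er--Wold device. Your explicit check that $\Gamma_{\cI}$ is also bounded above is a useful detail that the paper leaves implicit in the Slutsky step.
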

\begin{proof}[Proof of Theorem \ref{th:CLTmultvarder}]
The proof is analogous to that of Theorem \ref{th:CLTgen}. Since $P_{\cI}^\top \, P_{\cI} = \mathrm{I}_q$, we have $\|P_{\cI}\, u\| = \|u\|$, $u \in \R^q$, and hence that
$$ u^\top\, \Gamma_{\cI}\, u \geq \lambda_{\min}\,(\Sigma_{m,n})\, \|S^{-1}\, P_{\cI}\, u\|^2 \geq \frac{\lambda_{\min}\,(\Sigma_{m,n})}{\|S\|_{\op}^2}\, \|u\|^2,$$
hence 
$$\lambda_{\min}\,(\Gamma_{\cI}) \geq \frac{\min(\underline{\lambda}_M,\underline{\lambda}_N)}{2\,(\# {\cal K})^2\, \rho_2^2 }.$$
Then, on $\{\lambda_{\min}(\hat S) >0\}$,
\begin{align*}
\Gamma_{\cal I}^{-1/2}\,D_{\cal I}^{-1}\, \big(\hat r_{\cI}(x) - \bar r_{\cI}(x)\big) 
& = \Gamma_{\cal I}^{-1/2}\, P_{\cI}^\top \, S^{-1} \, \Big(\big(\hat V - V\big)  \, + \, \, (S - \hat S) \,  S^{-1} \, V\Big) + {\cal O}_{\Pr}\Big(\frac1{\sqrt{m\,\min(m,n)}\, h^d} \Big).
\end{align*}
Then asymptotic normality follows from the Cram\'er-Wold theorem: For $u \in \R^q$ fixed, $u \not=0$,  set $u_{m,n}= S^{-1}\, P_{\cI}\, \Gamma_{\cI}^{-1/2}\, u$. Noting that $u_{m,n}^\top\, \Sigma_{m,n}\, u_{m,n} = u^\top\, \Gamma_{\cal I}^{-1/2}\, \Gamma_{\cal I}\, \Gamma_{\cal I}^{-1/2}\,u = \|u\|^2 \not=0$ is constant, we can apply Lemma \ref{lem:asympnormV}, 3., to conclude that proof of \eqref{eq:CLTder}.
    
The second statement then follows with the same argument as in the proof of Theorem \ref{th:CLTgen}. 
\end{proof}

\section{Proof of Theorem \ref{T:KL.1}} \label{sec:proofKL}
The Taylor approximation in \eqref{eq:KL.2} can be rigorously written as
\begin{equation}\label{eq:Taylorexact}
 \hat{\Phi} - \tilde{\Phi} \, = \, \big(\hat\Phi - \mathbb{E}\big[\hat\Phi \mid {\cal D}^{[1]}_{n^*,m^*}\big]\big) \, - \, \hat{R}^*_{n^*,m^*}\,,
\end{equation}
where the remainder term $\hat{R}^*_{n^*,m^*}$ satisfies
\begin{equation} \label{eq:Pr.KL.2} \big|\hat{R}^*_{n^*,m^*}\big| \, \leq \, \frac12 \, \big\|\Phi''\big\|_\infty\cdot \int \big|\hat{r}^*_{1,t}(x) \, - \, r(x)\big|^2 g(x)\, \dd x\,. \end{equation}
Throughout this proof all supremum norms are taken over the domain $[\rho_7,\rho_8]$. 
Then we derive that upon using the Cauchy-Schwarz w.r.t.~$g\, \dd \lambda$ that 
\begin{align} \nonumber
\mathbb{E} \big[\,\big|\hat{R}^*_{n^*,m^*}\big|^2\,\big] & \, \leq \, \frac14 \, \big\|\Phi''\big\|^2 \cdot \rho_2\cdot\int_{{\cal G}} \mathbb{E} \big[\min\big\{(\rho_8-\rho_7)^4 \, , \, \big|\hat{r}^*_1(x) \, - \, r(x)\big|^4\big\}\big] \dd x \\ \label{eq:Pr.KL.4}
& \, \lesssim \, \int_{{\cal G}} \mathbb{E} \big[\min\big\{(\rho_8-\rho_7)^4\, , \, \big|\hat{r}^*_1(x) - \bar{r}(x)\big|^4\big\}\big] \dd x \, + \, \int_{{\cal G}} \big|\bar{r}(x) - r(x)\big|^4 \dd x\,.
\end{align}
By Proposition \ref{prop:bias} the second term in (\ref{eq:Pr.KL.4}) has the asymptotic upper bound ${\cal O}(h^{4\beta})$. With respect to the first term we apply Theorem \ref{th:concinequ}, along with the remark thereafter, in order to obtain that
\begin{align}
\mathbb{E} \big[\min\big\{(\rho_8-\rho_7)^4\, , \, \big|\hat{r}^*_1(x) - \bar{r}(x)\big|^4\big\} \big]& \, = \, \int_{0}^{(\rho_8-\rho_7)} \, 4\, \delta^3\,\mathbb{P}\big(\big|\hat{r}^*_1(x) - \bar{r}(x)\big|\geq \, \delta\big)\, \dd  \delta \nonumber \\ 
& \, \lesssim \int_0^1\, 4\, \delta^3\, \exp\big(-\, c\, \min(m^*,n^*)\, h^d \, \delta^2\big)\, \dd \delta \label{eq:KL-tailbound}\\
& \, = \,  \int_0^1\, 2\, u\, \exp\big(-\, c\, \min(m^*,n^*)\, h^d \, u\big)\, \dd u \nonumber\\
& \, = \,  {\cal O} \Big(\min(m^*,n^*)^{-2}\, h^{-2\, d}\Big),\nonumber
\end{align}
where in \eqref{eq:KL-tailbound} we note that when applying \eqref{eq:expbound} from Theorem \ref{th:concinequ} we have on the compact interval $[0, \rho_8-\rho_7]$ that $\delta^2/(\rho_8-\rho_7) \leq \delta$ as well as $\delta^2 \leq (\rho_8-\rho_7)^2$. 
Therefore the imposed bandwidth selection provides that 
\begin{equation}\label{eq:KLremainder}
 \mathbb{E} \big[\big|\hat{R}^*_{n^*,m^*}\big|^2\big] \, = \, {\cal O}\big(\min\{n^*,m^*\}^{-4\beta/(2\beta+d)}\big) \, = \, o\big(\min\{n,m\}^{-1}\big)\,, 
 \end{equation}
what makes the above term asymptotically negligible.

Throughout the remaining proof we suppress the first argument in $\Psi_i$ and write $\Psi_i(x)$ for $\Psi_i(r;x)$. 
Introduce
$$ L_{m,n} \, = \, \frac1{m-m^*}\sum_{k=m^*+1}^m \big(\Psi_1(Y_k) - \E[\Psi_1(Y_1)]\big) \, + \, \frac1{n-n^*}\sum_{j=n^*+1}^n \big(\Psi_2(X_j) - \E[\Psi_2(X_1)]\big),$$
 set
$$\Delta_i(x)  \, = \, \hat{\Psi}_i(x) - \Psi_i(x), \qquad i=1,2,$$    
and note that for all $x\in {\cal G}$ we have that
\begin{align}\label{eq:boundsphi}
\begin{split}    
%\big|\Psi_1(x)\big| & \, \leq \, \|\Phi\|_\infty + \rho_8 \cdot\big\|\Phi'\big\|_\infty\,, \\
%\big|\Psi_2(x)\big| & \, \leq \, \big\|\Phi'\big\|_\infty\,, \\
\big|\Delta_1(x) \big| & \, \leq \, \rho_8\cdot \big\|\Phi''\big\|_\infty \cdot \big|\hat{r}^*_{1,t}(x) \, - \, r(x)\big|\,, \\
\big|\Delta_2(x)\big| & \, \leq \, \big\|\Phi''\big\|_\infty \cdot \big|\hat{r}^*_{1,t}(x) \, - \, r(x)\big|\,, 
\end{split}
\end{align}
where the first inequality follows from 
$$\big(\Phi(u) - \Phi'(u)\,u \big)' = - u\, \Phi''(u).$$
%the left side of (\ref{eq:Pr.KL.5}) converges to zero just by Corollary \ref{C:1}. Applying this result together with \eqref{eq:KLremainder} to \eqref{eq:Pr.KL.3} completes the proof of the first claim of the theorem.
%
%Focusing on the asymptotic distribution, note that the Taylor expansion with remainder can be written as 
% %
Now set
\begin{align*}
D_1(x) & \, = \, \Delta_1(x) - \E \big[\Delta_1(Y_m) \mid {\cal D}^{[1]}_{n^*,m^*} \big], \qquad D_2(x) \, = \, \Delta_2(x) - \E \big[\Delta_2(X_n) \mid {\cal D}^{[1]}_{n^*,m^*} \big].
\end{align*}
Then we have that
\begin{align*}
\, &\E\Big[\Big(\hat\Phi - \mathbb{E}\big[\hat\Phi \mid {\cal D}^{[1]}_{n^*,m^*}\big] - L_{m,n} \Big)^2 \Big]\\
\, = \, &  \E\Big[\Big(\frac1{m-m^*}\, \sum_{k=m^*+1}^{m}\, D_1(Y_k)\, + \, \frac1{n-n^*}\, \sum_{j=n^*+1}^{n}\, D_2(X_j) \Big)^2 \Big]\\ 
\, = \, & \E\Big[\mbox{var}\Big(\frac1{m-m^*}\, \sum_{k=m^*+1}^{m}\, \Delta_1(Y_k)\, + \, \frac1{n-n^*}\, \sum_{j=n^*+1}^{n}\, \Delta_2(X_j) \mid {\cal D}^{[1]}_{n^*,m^*} \Big) \Big]\\
\, = \, & \frac1{m-m^*}\,\E\big[\mbox{var}\big(\Delta_1(Y_m)\mid {\cal D}^{[1]}_{n^*,m^*}\big) \big]  + \, \frac1{n-n^*}\, \E\big[\mbox{var}\big(\Delta_2(X_n)\mid {\cal D}^{[1]}_{n^*,m^*}\big) \big]\\
\, \leq \, & \frac1{m-m^*}\,\E\big[\big(\Delta_1(Y_m)\big)^2 \big]  + \, \frac1{n-n^*}\, \E\big[\big(\Delta_2(X_n)\big)^2 \big] \\
\, = \, & o\big(n^{-1}\big)
\end{align*}
where the third equality follows from conditional independence given ${\cal D}^{[1]}_{n^*,m^*}$, and the final bound  
by using \eqref{eq:boundsphi}, Corollary \ref{C:1} and $n \asymp m$. 
Together with \eqref{eq:Taylorexact}, \eqref{eq:KLremainder}, the definition of $L_{m,n}$ and $m/n \to \kappa$ it follows that $\sqrt n\,(\hat\Phi - \tilde\Phi)$ and $\sqrt n\, L_{m,n}$ have the same limit in
$L_2$, while $n\,\E[L_{m,n}^2] \to {\cal V}(f,g)$ since $m^*/m \to 0$, $n^*/n \to 0$ and
$m/n \to \kappa$. The second claim then follows from 
$$ \sqrt n\, \big(\hat{\Phi} - \tilde{\Phi}\big) \, = \, \sqrt n\, L_{m,n} \, + \, o_{\Pr}(1)\,,$$
and the Lindeberg--L\'evy central limit theorem applied to each of the two independent averages. 

%Focusing on the asymptotic distribution we deduce that, for any $t\in \mathbb{R}$, 
%\begin{align*}
%\mathbb{E} & \exp\big\{it\sqrt{n} \big(\hat{\Phi} - \tilde{\Phi}\big)\big\} \\ & \, = \,  
%\mathbb{E}\, \mathbb{E}\Big( \exp\Big\{it\sqrt{n} \Big(\frac1{m-m^*} \sum_{k=m^*+1}^m \big(\hat{\Psi}_1(Y_k) - \mathbb{E}(\hat{\Psi}_1(Y_k)|{\cal D}^{[1]}_{n^*,m^*})\big) \\ & \hspace{3.4cm} + \, \frac1{n-n^*} \sum_{j=n^*+1}^n \big(\hat{\Psi}_2(X_j) - \mathbb{E}(\hat{\Psi}_2(X_j)|{\cal D}^{[1]}_{n^*,m^*})\big)\Big)\Big\}\Big|{\cal D}^{[1]}_{n^*,m^*}\Big) \, \pm \, o(1) \\
%& \, = \, \mathbb{E} \exp\Big\{-\frac12 t^2 \, \delta^{-1}\, \mbox{var}\big(\hat{\Psi}_1(Y_m)|{\cal D}^{[1]}_{n^*,m^*}\big) \, - \, \frac12 t^2 \, \mbox{var}\big(\hat{\Psi}_2(X_n)|{\cal D}^{[1]}_{n^*,m^*}\big)\Big\} \, \pm \, o(1) \\
%& \, = \, \exp\Big\{-\frac12 t^2 \, \delta^{-1}\, \mbox{var} \big(\Psi_1(Y_1)\big) \, - \, \frac12 t^2 \, \mbox{var}\big(\Psi_2(X_1)\big)\Big\} \, \pm \, o(1)\,,
%\end{align*}
%by dominated convergence. Again we use that pointwise convergence of the characteristic functions implies convergence in distribution so that the second claim of the theorem has been shown. 

\section{Proofs of technical lemmas}\label{sec:proofslemmas}

\begin{proof}[Proof of Lemma \ref{lem:Slowereigagain}]
For any $\alpha\in \mathbb{R}^{\#{\cal K}_l}$ consider
\begin{align*}   \alpha^\top S \alpha & %\, = \, \int K_h(x-y) \, \Big|\sum_{{\bf k}\in {\cal K}} \alpha_{\bf k} \cdot \prod_{j=1}^d\Big(\frac{x_j-y_j}h\Big)^{k_j}\Big|^2 \, g(y)\, dy \\ & 
\, = \, \int K(y) \, \big|\alpha^\top\, Q(y) \big|^2 \, g(x-y\,h)\, \dd y \\ 
\, & \, \geq \, \rho_3 \, \int_{G(x,h)} K(y) \, \big|\alpha^\top\, Q(y) \big|^2 \,  \dd y  \,, 
\end{align*}
using $K(z) > 0$ for all $\|z\|<1$ where 
\begin{align*} G(x,h) & \, := \, \big\{z\in \mathbb{R}^d \, : \, \|z\| \leq 1 \, , \, g(x-zh) \geq \rho_3\big\} \\
& \, = \, \big\{x/h - y/h \, : \, \|y-x\|\leq h \, , \, g(y) \geq \rho_3\big\}\,.
\end{align*}
By condition (\ref{eq:Boundary}), the shift-invariance and the $d$-homogeneity of $\lambda^d$ (with respect to stretch factors) we deduce that
$\lambda^d\big(G(x,h)\big) \geq \rho_4$ whenever $h\in (0,\rho_5)$. For any $\alpha \in \mathbb{R}^{\#{\cal K}_l}$ with $\|\alpha\|=1$ the $d$-variate polynomial $z \mapsto \alpha^\top\, Q(z)$, vanishes on a $\lambda^d$-null set only so that $\alpha^\top S \alpha > 0$ and, hence, the matrix $S$ is positive definite for $h\in (0,\rho_5)$. Moreover, for these $\alpha$ and $h$, the term $\alpha^\top S\alpha$ is bounded from below by
\begin{equation} \label{eq:infimum} \rho_3 \cdot \inf_{G\in {\cal G},P\in {\cal P}} \int_G K(z) \, |P(z)|^2 \, dz\,, \end{equation}
where ${\cal P}$ denotes the set of all $d$-variate polynomials with the degree $l$ whose coefficient vector has the Euclidean norm $1$ and ${\cal G}$ stands for the class of all Borel subsets $G$ of the closed Euclidean ball $B_1(0) =: B$ such that $\lambda^d(G)\geq \rho_4$.

Assume this infimum to be zero. Then there exist two sequences $(P_m)_m$ and $(G_m)_m$ in ${\cal P}$ and ${\cal G}$, respectively, such that
$$ \lim_{m\to\infty} \, \int_{G_m} K(z) \, |P_m(z)|^2 \, \dd z \, = \, 0\,.$$
Note that ${\cal P}_m$ forms a compact subset of a finite-dimensional linear space where all norms are equivalent. Therefore we have some subsequence $(P_{\sigma(m)})_m$ of $(P_m)_m$ which converges to some $\tilde{P}\in {\cal P}$ with respect to the supremum norm on the domain $B$. As $K$ is bounded it follows that
$$ \lim_{m\to\infty} \, \int_{G_{\sigma(m)}} K(z) \, |P(z)|^2 \, \dd z \, = \, 0\,, $$
where 
$$ \int_{G_{\sigma(m)}} K(z) \, |P(z)|^2 \, \dd z \, \geq \, \rho_4 \cdot \int K(z)\, |P(z)|^2 \, \dd \mu_{\sigma(m)}(z)\, \geq \, 0\,, $$
with the uniform distribution $\mu_{\sigma(m)}$ on $G_{\sigma(m)}$. Thus $\big(\mu_{\sigma(m)}\big)_m$ forms a tight sequence of probability measures supported on $B$ so that, by Prokhorov's theorem, some subsequence $\big(\mu_{\sigma'(m)}\big)_m$ of $\big(\mu_{\sigma(m)}\big)_m$ converges weakly to some probability measure $\mu$ that is supported on $B$. As the function $K\cdot P^2$ is continuous and supported on $B$ it follows that $\int K(z) \, |P(z)|^2 \, \dd \mu(z) \, = \, 0$ and, hence, that the support of $\mu$ is included in the intersection of the zero set of $P$ and $B$, which is denoted by ${\cal S}$. Note that ${\cal S}$ is a compact subset of $B$ with $\lambda^d({\cal S})=0$. We introduce the function
$$  d_k(z) \, := \, \max\big\{0 \, , \, 1 - k\cdot \inf\{\|z-s\| : s\in{\cal S}\}\big\}\,, \qquad z\in B, \, k\in \mathbb{N}\,. $$
As each function $d_k$ is continuous and bounded it holds that
$$ \lim_{m\to\infty} \int d_k(z)\, \dd\mu_{\sigma'(m)}(z) \, = \, \int d_k(z) \, \dd\mu (z) \, = \, 1\,, $$
for all $k\in \mathbb{N}$ since $d_k(z)=1$ for all $z\in {\cal S}$. On the other hand we have that
$$ \sup_{m\in \mathbb{N}}\, \int d_k(z)\, \dd\mu_{\sigma'(m)}(z) \, \leq \, \rho_4^{-1}\cdot \int_B d_k(z) \, \dd z\,, $$
for all $k\in \mathbb{N}$ where the right side of the above inequality converges to zero as $k\to\infty$ by dominated convergence since the functions $d_k$ take on their values in the interval $[0,1]$ and satisfy $\lim_{k\to\infty} d_k(z) \, = \, {\bf 1}_{{\cal S}}(z)$ for all $z\in B$. We arrive at a contradiction so that (\ref{eq:infimum}) has a lower bound which depends on $\rho_3$, $\rho_4$, $K$, $d$ and $\beta$ only. 
\end{proof}

\begin{proof}[Proof of Lemma \ref{lem:tsybakov2}]
We deduce this from the matrix Chernoff bound, \citet[Theorem 5.1.1, (5.1.5)]{tropp2015introduction}. Set
$$ U_h(y) = \Big( \Big(\frac{x-y}{h} \Big)^{{\bf k}}\Big)_{{\bf k} \in {\cal K}},\qquad A_j = K_h\big(x-Y_j \big)\,U_h(Y_j)\, U_h(Y_j)^\top, $$
so that 
$$m \hat S = \sum_{j=1}^m A_j.$$
The $A_1, \ldots, A_m$ are rank-one, positive semidefinite matrices, and since $\mathrm{supp}(K) \subseteq [-1,1]^d$ and $K$ is upper bounded by $1$, 
\begin{equation}\label{eq:boundsvalues}
|(A_j)_{{\bf k}, {\bf k}'}| = \Big|K_h(x-Y_j)\, \Big(\frac{x-Y_j}{h} \Big)^{{\bf k}+{\bf k}'} \Big| \leq h^{-d},\qquad {\bf k},{\bf k}' \in {\cal K}.
\end{equation}
For a $\ell \times \ell$-matrix $A$, $\|A\|_{\mathrm{op}} \leq \ell\, \max_{i,j} |A_{i,j}|$. Since $A_j$ is positive semidefinite, $\lambda_{\text{max}}(A_j) = \|A_j\|_{\mathrm{op}}$, so that 
$$ \lambda_{\text{max}}(A_j) = \|A_j\|_{\mathrm{op}} \leq \, (\# {\cal K})\, h^{-d}.$$
Further, $\E[\hat S] = S$ and $\lambda_{\min}(S) \geq \underline{\lambda}$ for $h \in (0, \rho_5)$ by Lemma \ref{lem:lowerboundeigen}. 
Therefore, using \citet[Theorem 5.1.1, (5.1.5)]{tropp2015introduction} with $\epsilon=1/2$ and observing that $\mathrm{e}^{-1/2}/\sqrt{1/2} = \sqrt{2/\mathrm{e}} < 1$ gives
\begin{align*}
\Pr\big(\lambda_{\min}(\hat S) \leq \underline{\lambda}/2\big) & \, \leq \, \Pr\big(\lambda_{\min}(m\, \hat S) \leq \, m\, \lambda_{\min}(S)/2\big)\\
 & \, \leq \, (\# {\cal K})\,\big(\mathrm{e}^{-1/2}/\sqrt{1/2}\big)^{\frac{m\, \lambda_{\min}(S)}{(\# {\cal K})\,h^{-d}}}\\
 & \, \leq \, (\# {\cal K})\,\exp(-c\, m\, h^d)\\
\end{align*}
for $c = \big(\underline{\lambda}/(2\, (\# {\cal K}) \big)\, \log(\mathrm{e}/2) $, proving the lemma. 
\end{proof}

\begin{proof}[Proof of Lemma \ref{lem:boundsterms}]
We use that the kernel $K$ satisfies $0 \leq K \leq 1$, and its support is contained in $B = B_1(0) \subseteq [-1,1]^d$. Hence since $h \in (0,\rho_1)$, if $z$ is in the support of $K$, by Assumption \ref{ass:boundedness} we have that $f(x-hz) \leq \rho_2$, and similarly for $g$. Now concerning \eqref{eq:stochbound1}, 
 \begin{align*} 
 \mathbb{E} \big[\|V - \hat{V}\|_2^2\big] & \, = \,  \sum_{{\bf k}\in {\cal K}} \mbox{var}\, \big(\hat{V}_{{\bf k}}\big) \, \leq \,  \sum_{{\bf k}\in {\cal K}} \,\frac1n\,  \E\Big[\Big({K}_h(x-X_1)\, \Big(\frac{x-X_{1}}h\Big)^{\bf k}\Big)^2\Big]\\
 & \, = \,  \frac1{n\, h^d} \, \sum_{{\bf k}\in {\cal K}}\, \int K^2(z) \,  z^{2\,{\bf k}} f(x-hz)\,\dd z\\ & \, \leq \,\#{\cal K}\,\cdot \rho_2 \cdot \frac1{n\, h^d}  \,, \\  
 \mathbb{E} \big[\|S - \hat{S}\|_F^2\big] & \, = \,  \sum_{{\bf k},{\bf k}'\in {\cal K}} \mbox{var}\, \big(\hat{S}_{{\bf k},{\bf k}'}\big) \, \leq \, (\# {\cal K})^2\, \rho_2 \,\cdot\frac1{m\, h^d} \,, \\
\|V \|_2^2 & \, = \,  \sum_{{\bf k}\in {\cal K}} \big(\E\, \big[\big|\hat{V}_{{\bf k}}\big|\big]\big)^2 \, \leq \, \sum_{{\bf k}\in {\cal K}}  \, \Big(\int \big|K(z) \, z^{{\bf k}} \big| \, f(x - hz)\, \dd z\Big)^2 \, \leq \,\# {\cal K} \cdot \rho_2^2 \,,   \\
 \mathbb{E} \big[\|\hat{V}\|_2^2\big] &\, \leq \,2\,  \mathbb{E} \big[\|V - \hat{V}\|_2^2\big] \, +\, 2\, \|V \|_2^2. 
 \end{align*}
\end{proof}
\begin{proof}[Proof of Lemma \ref{lem:boundsterms2}]
We prove \eqref{eq:tailbounds}, starting with 
 $\hat V$: For a coordinate ${\bf k} \in {\cal K}$ we can write
$$ \hat V_{{\bf k}} - V_{{\bf k}} = \frac1n\, \sum_{j=1}^n \big(\xi_{j,{\bf k}} - \E[\xi_{j,{\bf k}}] \big),\qquad \xi_{j,{\bf k}} = K_h(x - X_j)\, \Big( \frac{x-X_j}{h} \Big)^{{\bf k}}. $$
We intend to apply the scalar Bernstein inequality for the coordinates. To this end, note that
\begin{equation}\label{eq:boundsvalues1}
|\xi_{j,{\bf k}}| = \Big|K_h(x-X_j)\, \Big(\frac{x-X_j}{h} \Big)^{{\bf k}} \Big| \leq h^{-d},\qquad {\bf k} \in {\cal K}, \qquad \text{so} \quad |\xi_{j,{\bf k}} - \E[\xi_{j,{\bf k}}]| \leq 2\, h^{-d},
\end{equation}
and as above for \eqref{eq:stochbound1}, $\mbox{var}\, \big(\xi_{j,{\bf k}}\big) \leq \rho_2\, h^{-d}$. 
%
% \begin{align*} 
% \mbox{var}\, \big(\xi_{j,{\bf k}}\big) & \, \leq \, \E\big[\xi_{j,{\bf k}}^2 \big]  \, = \,  h^{-d} \,  \int K^2(z) \,  z^{2\,{\bf k}} f(x-hz)\,\dd z\\ 
% & \, \leq \, \rho_2 \,  h^{-d} \, \int K^2(z) \,  z^{2\,{\bf k}} \dd z  \, = \, C_V\,\,  h^{-d} \,. 
% \end{align*}
%
The Bernstein inequality yields
\begin{align}
    \Pr\big(|\hat V_{{\bf k}} - V_{{\bf k}}| \geq t \big) & \, \leq \, 2\, \exp\Big(- \frac{n\, h^d\, t^2/2}{\rho_2 + 2\, t/3} \Big)\nonumber \\
    & \, \leq \, 2\, \exp\Big(- \,\frac{n\, h^d}{2\, (\rho_2 + 1)} \,  \frac{t^2}{1 + t} \Big) \, \leq \, 2\, \exp\big(- \frac{n\, h^d\,  \min(t,t^2)}{4\,(\rho_2 + 1)}  \big) . \label{eq:boundcoordinate}
\end{align}
Since $\|\hat V - V \|_2 \, \leq \, (\#\, {\cal K})^{1/2}\, \max_{{\bf k} \in {\cal K}}|\hat V_{{\bf k}} - V_{{\bf k}}|$, 
we obtain from the union bound and \eqref{eq:boundcoordinate}, 
\begin{align*}
    \Pr\big( \|\hat V - V \|_2 \geq \eta \big)  \, \leq \, 2\, (\# \, {\cal K}) \, \exp\big(- \,n\, h^d\,  \min(\eta,\eta^2)/(4\,(\rho_2 + 1)\,\#\, {\cal K} ) \big), 
\end{align*}
which gives the first inequality in \eqref{eq:tailbounds}. 

For the second, passing to the Frobenius norm we get
$$\|\hat S - S \|_{\mathrm{op}} \, \leq \, \|\hat S - S \|_F \, \leq \, \#\, {\cal K}\, \max_{{\bf k}, {\bf k}' \in {\cal K}}|\hat S_{{\bf k}, {\bf k}'} - S_{{\bf k}, {\bf k}'}|,$$
and again, each term is bounded by $2\, h^{-d}$ with variance bounded by  $\rho_2\, h^{-d}$, and the  scalar Bernstein inequality and the union bound yield the conclusion. 
\end{proof}

\begin{proof}[Proof of Lemma \ref{lem:uniformbounds}]
    Let us start with the first part of \eqref{eq:stochboundunif}. From the Lipschitz continuity of the kernel $K$ and its definition it follows that $\hat V_{{\bf k}}$, ${\bf k} \in {\cal K}$, and hence its expected value $V_{{\bf k}}$ are  Lipschitz continuous with Lipschitz constant of order $h^{-(d+1)}$, independently of the sample, hence so are $\hat V$ and $V$. Denote their Lipschitz constants by $L_V\, h^{-(d+1)}$. 

    Given $\epsilon>0$ choose an $\epsilon$-cover ${\cal G}_{\epsilon}$ of  ${\cal G}$ with centers in ${\cal G}$ (internal cover) of cardinality $\# {\cal G}_{\epsilon} \leq C_{{\cal G}}\, \epsilon^{-d}$. Then 
    \begin{equation}\label{eq:discreteeff}
    \sup_{x \in {\cal G}} \big\|\hat V(x) - V(x)\big\|_2^2 \, \leq \, 2\, \sup_{x \in {\cal G}_\epsilon} \big\|\hat V(x) - V(x)\big\|_2^2 \, + \, 8\, L_V^2 \, h^{-2\, (d+1)}\, \epsilon^2, 
    \end{equation}
    and hence
    $$\E\Big[ \frac{n\, h^d}{\log n}\, \sup_{x \in {\cal G}} \big\|\hat V(x) - V(x)\big\|_2^2\Big] \, \leq \, 2\, \E\Big[ \frac{n\, h^d}{\log n}\,\sup_{x \in {\cal G}_\epsilon} \big\|\hat V(x) - V(x)\big\|_2^2 \Big] \, + \, 8\, L_V^2 \,\frac{n}{\log(n)\, h^{d+2}}\, \epsilon^2. $$

    Note also that $|\hat V_{{\bf k}}|  \leq h^{-d}$ as well as $|V_{{\bf k}}|\leq h^{-d}$. Hence setting 
    $A_{n,h} = \big(\# {\cal K}\, n/(h^d\, \log n\big)^{1/2}$
    for each $a>1$, from \eqref{eq:tailbounds} and the union bound it follows that
    \begin{align*}
    \, & \E\Big[ \frac{n\, h^d}{\log n}\,\sup_{x \in {\cal G}_\epsilon} \big\|\hat V(x) - V(x)\big\|_2^2 \Big] \\  \, \leq \, & a^2 \, +  \int_a^{2\, A_{n,h}}\, 2\, \eta \, \Pr\Big(\sup_{x \in {\cal G}_\epsilon} \big\|\hat V(x) - V(x)\big\|_2 \, \geq \, \big(\log(n) /(n\, h^d)\big)^{1/2} \, \eta \Big)\, \dd \eta\\ 
    \, \leq \, & a^2 \, + 8 \, (\# {\cal K})\, C_{{\cal G}}\, \epsilon^{-d}\, A_{n,h}^2\, \exp\big(- \bar c_1\, \log(n)\, a^2 \big)\,, 
    \end{align*}
    where also $a \leq \big((n\, h^d)/\log n\big)^{1/2}$ to remove the linear term $\delta$ in $\min(\delta, \delta^2)$ in \eqref{eq:tailbounds}, which is feasible from $\log(n)/n = o(h^d)$. Now take $\epsilon = (h^{d+2}/n)^{1/2}$ and $a$ so large such that $n^{1 + d/2}\, h^{- d (2 + d/2)} = {\cal O}\big(\exp(\bar c_1 \, a^2 \,\log n \big)$, again feasible from $\log(n)/n = o(h^d)$. This concludes the proof of the first part of \eqref{eq:stochboundunif}, the second being similar. 

    \smallskip

    Next we turn to \eqref{eq:unifempeigbound}, the uniform version of Lemma \ref{lem:tsybakov2}. We give a direct proof which builds on Weyl's inequality, without involving the matrix Chernoff bound as in the proof of Lemma \ref{lem:tsybakov2}. 
    Since $\inf_{x \in {\cal G}} \lambda_{\min}(S(x)) \geq \underline{\lambda} $ by Lemma \ref{lem:lowerboundeigen}, we have that 
    \begin{equation}\label{eq:boundinitial}
        \Pr\big(\inf_{x \in {\cal G}}\, \lambda_{\min}(\hat S(x)) \leq \underline{\lambda}/2\big) \, \leq \, \Pr\big(\sup_{x \in {\cal G}}\, \big|\lambda_{\min}(\hat S(x)) - \lambda_{\min}(S(x))\big| \, \geq \, \underline{\lambda}/2\big)
    \end{equation}
    From Weyl's inequality, first we get for $\eta >0$ that
\begin{align}
   \Pr\big(\big|\lambda_{\min}(\hat S(x)) - \lambda_{\min}(S(x))\big| \geq \eta \big) & \, \leq \, \Pr\big(\|\hat S(x) - S(x) \|_{\mathrm{op}} \geq \eta \big) \nonumber\\
   & \, \leq \,  2\, (\# {\cal K})^2 \, \exp\big(- \bar c_2\, m\, h^d\, \min(\eta,\eta^2)\big). \label{eq:exeigen}
\end{align}
where the second inequality follows from the second one in \eqref{eq:tailbounds}.  Further, 
\begin{align*}
    \big|\lambda_{\min}(\hat S(x)) - \lambda_{\min}(\hat S(y))\big| \, \leq \, \|\hat S(x) - \hat S(y) \|_{\mathrm{op}} \, \leq \, L\, h^{-(d+1)} \|x-y\| 
\end{align*}
the second inequality for a Lipschitz constant $L>0$, and the same inequality holds for $\big|\lambda_{\min}( S(x)) - \lambda_{\min}( S(y))\big|$. Therefore, analogously to \eqref{eq:discreteeff}, 
    $$ \sup_{x \in {\cal G}}\, \big|\lambda_{\min}(\hat S(x)) - \lambda_{\min}(S(x))\big| \leq \sup_{x \in {\cal G}_\epsilon}\, \big|\lambda_{\min}(\hat S(x)) - \lambda_{\min}(S(x))\big|\, + \,  2\, L \, h^{-\, (d+1)}\, \epsilon.$$
Setting $\epsilon = \underline{\lambda}\, h^{(d+1)}/(8\,L)$ we obtain
\begin{align*}
\Pr\big(\sup_{x \in {\cal G}}\, \big|\lambda_{\min}(\hat S(x)) - \lambda_{\min}(S(x))\big| \, \geq \, \underline{\lambda}/2\big) & \, \leq \, \Pr\big(\sup_{x \in {\cal G}_\epsilon}\, \big|\lambda_{\min}(\hat S(x)) - \lambda_{\min}(S(x))\big| \, \geq \, \underline{\lambda}/4\big)\\
& \, \leq \,  C_{{\cal G}}\, \underline{\lambda}^{-d}\, h^{-d\, (d+1)}\, (8\,L)^d\, 2\, (\# {\cal K})^2 \, \exp\big(- \bar c_2\, \underline{\lambda}^2 \, m\,  h^d\,/16\big),
\end{align*}
where in the last step we use the union bound and \eqref{eq:exeigen}. This together with \eqref{eq:boundinitial} implies \eqref{eq:unifempeigbound}. 
\end{proof}

%%%%%%%%%%%%%%%%%%%%%%%%%%%%%%%%%%%%%%%%%%%%%%%%%%%%%%%%%%%%%%%%%%%%%%
%%%%%%%%%%%%%%%%%%%%%%%%%%%%%%%%%%%%%%%%%%%%%%%%%%%%%%%%%%%%%%%%%%%%%%
%%%%%%%%%%%%%%%%%%%%%%%%%%%%%%%%%%%%%%%%%%%%%%%%%%%%%%%%%%%%%%%%%%%%%%

\end{document}